\documentclass[11pt,reqno]{amsart}
\usepackage[a4paper,margin=28mm]{geometry}
\usepackage{amsmath,amssymb,amsthm,mathtools,booktabs,array,longtable,placeins}
\usepackage[T1]{fontenc}
\usepackage{lmodern,microtype,mathrsfs}
\usepackage{tikz}
\tikzset{
  dyn/.style={x=10.5mm,y=8.5mm,baseline=(current bounding box.center)},
  dn/.style={circle,draw,fill=white,inner sep=0pt,minimum size=4.4pt},
  rl/.style={font=\scriptsize,inner sep=1.2pt},
  frob/.style={<->,>=stealth,densely dotted,shorten >=1pt,shorten <=1pt},
  endpt/.style={fill=black!12,rounded corners=3pt},
}
\usepackage[colorlinks=true,linkcolor=blue,citecolor=blue,urlcolor=blue]{hyperref}
\hypersetup{pdftitle={Characters and Polynomiality for Maximal Parabolic Unipotent Radicals in Finite Classical Groups},pdfauthor={Qingchun Hao and Yang Yang}}
\numberwithin{equation}{section}
\newtheorem{theorem}{Theorem}[section]
\newtheorem{lemma}[theorem]{Lemma}
\newtheorem{proposition}[theorem]{Proposition}
\newtheorem{corollary}[theorem]{Corollary}
\newtheorem{conjecture}[theorem]{Conjecture}
\theoremstyle{definition}

\theoremstyle{remark}
\newtheorem{remark}[theorem]{Remark}
\newcommand{\BF}{\mathbb F}
\newcommand{\BN}{\mathbb N}
\DeclareMathOperator{\Mat}{Mat}
\DeclareMathOperator{\Sym}{Sym}
\DeclareMathOperator{\Alt}{Alt}
\DeclareMathOperator{\SHerm}{SHerm}

\DeclareMathOperator{\GL}{GL}
\DeclareMathOperator{\Sp}{Sp}
\DeclareMathOperator{\SO}{SO}
\DeclareMathOperator{\Ugrp}{U}
\DeclareMathOperator{\Irr}{Irr}
\DeclareMathOperator{\Hom}{Hom}
\DeclareMathOperator{\rank}{rank}
\DeclareMathOperator{\tr}{tr}
\DeclareMathOperator{\Tr}{Tr}
\DeclareMathOperator{\rad}{rad}

\DeclareMathOperator{\Ind}{Ind}

\DeclareMathOperator{\Lie}{Lie}
\newcommand{\gauss}[3]{\genfrac{[}{]}{0pt}{}{#1}{#2}_{#3}}
\title[Characters of maximal parabolic radicals]{Characters and Polynomiality for Maximal Parabolic Unipotent Radicals in Finite Classical Groups}
\author{Qingchun Hao}
\author{Yang Yang}
\address[Qingchun Hao and Yang Yang]{School of Mathematical Sciences, East China Normal University, Shanghai, China}
\email{52300155006@stu.ecnu.edu.cn}
\email{52275500011@stu.ecnu.edu.cn}
\keywords{Finite classical group, unipotent radical, irreducible character, Schubert variety, $R$-polynomial}
\begin{document}
\raggedbottom
\begin{abstract}
Let $q$ be a power of an odd prime. For the unipotent radicals of maximal rational parabolic subgroups of the finite symplectic, orthogonal and unitary groups, we determine all complex irreducible characters, their values, and the number of characters of each degree. Each of these numbers is a polynomial in $q-1$ with nonnegative integer coefficients, in analogy with a conjecture of Isaacs for unitriangular groups. For the orthogonal radicals, the character table and the power maps do not depend on the Witt type of the residual form, whereas the isomorphism type of the radical does when the isotropic dimension is at least three and the residual dimension is positive and even. Consequently, for $3\le a\le n-1$ the radicals of the stabilizers of totally isotropic $a$-spaces in $\SO^+_{2n}(q)$ and $\SO^-_{2n}(q)$ form Brauer pairs; for $a\le n-2$ both stabilizers are maximal parabolic subgroups. Finally, we express the multiplicities through $R$-polynomials, with the unequal parameters $q^2,\dots,q^2,q$ in the unitary case.
\end{abstract}
\maketitle

\section{Introduction}

Let $\BF_q$ be the field with $q$ elements, where $q$ is a power of an odd prime $p$, and let $\mathrm{UT}_n(q)$ be the group of upper unitriangular $n\times n$ matrices over $\BF_q$; it is a Sylow $p$-subgroup of $\GL_n(q)$. The irreducible representations of $\mathrm{UT}_n(q)$ are notoriously hard to classify, and already the number of its conjugacy classes is a difficult invariant. In 1960 Higman \cite{Higman} made the following conjecture.

\begin{conjecture}[Higman]
For every $n\ge1$, the number of conjugacy classes of $\mathrm{UT}_n(q)$ is a polynomial in $q$ with integer coefficients.
\end{conjecture}

Pak and Soffer \cite{PakSoffer} have verified this for $n\leq16$. Lehrer \cite{Lehrer} proposed an analogous statement for character degrees. Put
$$
 N_{n,e}(q)=\bigl|\{\chi\in\Irr(\mathrm{UT}_n(q)): \chi(1)=q^e\}\bigr|.
$$

\begin{conjecture}[Lehrer]
For all $n$ and $e$, the function $N_{n,e}(q)$ is a polynomial in $q$ with integer coefficients.
\end{conjecture}

Isaacs \cite{Isaacs} proposed a stronger form.

\begin{conjecture}[Isaacs]\label{conj:Isaacs}
For all $n$ and $e$, the function $N_{n,e}(q)$ is a polynomial in $q-1$ with nonnegative integer coefficients.
\end{conjecture}

In the same paper Isaacs determined $N_{n,e}(q)$ for the largest and the second largest character degree.

The same questions make sense for other families of unipotent groups. If $G(q)$ is a family of finite groups indexed by $q$, we say that it has the \emph{polynomial property} if for every $e$ the number of irreducible characters of $G(q)$ of degree $q^e$ is given by a polynomial in $q$ with integer coefficients, and that it has the \emph{$(q-1)$-positivity property} if these polynomials lie in $\BN[q-1]$. Nien \cite{Nien} constructed all irreducible representations of the unipotent radicals of three-block parabolic subgroups of $\GL_n(q)$ and proved $(q-1)$-positivity for them. On the side of conjugacy classes, Goodwin and R\"ohrle \cite{GoodwinRoehrle} showed that if a parabolic subgroup of $\GL_n(q)$ stabilizes a flag of length at most five, then the number of conjugacy classes of its unipotent radical is a polynomial in $q$ with integer coefficients, and R\"ohrle \cite{Roehrle} proved that a maximal parabolic subgroup of a classical algebraic group has only finitely many orbits on its unipotent radical. Sangroniz \cite{Sangroniz} studied the Sylow $p$-subgroups of the finite classical groups and their Lie algebras, in particular their characters of large degree. Supercharacter theories and orbit methods for these Sylow subgroups were developed by Andr\'e and Neto in types $\mathrm B$, $\mathrm C$ and $\mathrm D$ \cite{AndreNeto2006,AndreNeto2009,AndreNetoTheory2009}, by Andrews, whose work also covers the unitary groups \cite{Andrews}, and by Guo, Jedlitschky and Dipper \cite{GuoJedlitschkyDipper}. Those groups are in general much larger than the ones studied here, and the cited works construct supercharacters or orbit modules rather than the full set of irreducible characters.

In this paper we study the unipotent radicals of maximal rational parabolic subgroups of
$$
 \Sp_{2n}(q),\qquad \SO_{2n+1}(q),\qquad
 \SO^+_{2n}(q),\qquad \SO^-_{2n}(q),\qquad
 \Ugrp_N(q)\quad(N\ge2),
$$
where $n\ge1$, with $n\ge2$ for the even orthogonal groups, and $\Ugrp_N(q)$ is the isometry group of a nondegenerate Hermitian form on $\BF_{q^2}^N$. (For $n=1$ the group $\SO^\pm_2(q)$ is a torus and has no proper parabolic subgroups.) These radicals are not covered by the general linear results above, and, as we shall see, they are small enough for their characters to be computed completely.

\subsection{Setting}\label{sec:setting}
Each group $G$ in this list is the group $\mathbf G^F$ of fixed points of a Frobenius map $F$ on a connected reductive group $\mathbf G$ over $\overline{\BF}_q$. We fix an $F$-stable Borel subgroup $\mathbf B$ and an $F$-stable maximal torus $\mathbf T\subseteq\mathbf B$, and we number the set $\Delta$ of simple roots as in Figure~\ref{fig:dynkin}. The Frobenius induces a permutation $\rho$ of $\Delta$; it is trivial in the split cases and is shown by dotted arrows in the twisted cases. A standard parabolic subgroup is $F$-stable exactly when its set of simple roots is $\rho$-stable. Hence the maximal rational parabolic subgroups containing $\mathbf B^F$ are the groups $\mathbf P_{\mathcal O}^F$, where $\mathcal O$ is a $\rho$-orbit in $\Delta$ and $\mathbf P_{\mathcal O}$ is the standard parabolic subgroup whose Levi factor has simple roots $\Delta\setminus\mathcal O$; every maximal rational parabolic subgroup of $G$ is conjugate to exactly one of them.

\begin{figure}[!htb]
\centering
\small
\begin{tabular}{@{}c@{\hspace{12mm}}c@{}}
$\Sp_{2n}(q)$, type $\mathrm C_n$ & $\SO_{2n+1}(q)$, type $\mathrm B_n$\\[1mm]
\begin{tikzpicture}[dyn]
\fill[endpt] (3.72,-0.24) rectangle (4.28,0.24);
\node[dn,label={[rl]below:$\alpha_1$}] (1) at (0,0) {};
\node[dn,label={[rl]below:$\alpha_2$}] (2) at (1,0) {};
\node at (2,0) {$\cdots$};
\node[dn,label={[rl]below:$\alpha_{n-1}$}] (3) at (3,0) {};
\node[dn,label={[rl]below:$\alpha_n$}] (4) at (4,0) {};
\draw (1)--(2) (2)--(1.55,0) (2.45,0)--(3);
\draw[double distance=1.6pt] (3)--(4);
\draw (3.58,0.15)--(3.42,0)--(3.58,-0.15);
\end{tikzpicture}
&
\begin{tikzpicture}[dyn]
\fill[endpt] (3.72,-0.24) rectangle (4.28,0.24);
\node[dn,label={[rl]below:$\alpha_1$}] (1) at (0,0) {};
\node[dn,label={[rl]below:$\alpha_2$}] (2) at (1,0) {};
\node at (2,0) {$\cdots$};
\node[dn,label={[rl]below:$\alpha_{n-1}$}] (3) at (3,0) {};
\node[dn,label={[rl]below:$\alpha_n$}] (4) at (4,0) {};
\draw (1)--(2) (2)--(1.55,0) (2.45,0)--(3);
\draw[double distance=1.6pt] (3)--(4);
\draw (3.42,0.15)--(3.58,0)--(3.42,-0.15);
\end{tikzpicture}
\\[9mm]
$\SO^+_{2n}(q)$, type $\mathrm D_n$ & $\SO^-_{2n}(q)$, type ${}^2\mathrm D_n$\\[1mm]
\begin{tikzpicture}[dyn]
\fill[endpt] (3.62,0.42) rectangle (4.18,0.98);
\fill[endpt] (3.62,-0.98) rectangle (4.18,-0.42);
\node[dn,label={[rl]below:$\alpha_1$}] (1) at (0,0) {};
\node[dn,label={[rl]below:$\alpha_2$}] (2) at (1,0) {};
\node at (2,0) {$\cdots$};
\node[dn,label={[rl]below left:$\alpha_{n-2}$}] (3) at (3,0) {};
\node[dn,label={[rl]right:$\alpha_{n-1}$}] (4) at (3.9,0.7) {};
\node[dn,label={[rl]right:$\alpha_n$}] (5) at (3.9,-0.7) {};
\draw (1)--(2) (2)--(1.55,0) (2.45,0)--(3) (3)--(4) (3)--(5);
\end{tikzpicture}
&
\begin{tikzpicture}[dyn]
\fill[endpt] (3.62,-0.98) rectangle (4.5,0.98);
\node[dn,label={[rl]below:$\alpha_1$}] (1) at (0,0) {};
\node[dn,label={[rl]below:$\alpha_2$}] (2) at (1,0) {};
\node at (2,0) {$\cdots$};
\node[dn,label={[rl]below left:$\alpha_{n-2}$}] (3) at (3,0) {};
\node[dn,label={[rl]above:$\alpha_{n-1}$}] (4) at (3.9,0.7) {};
\node[dn,label={[rl]below:$\alpha_n$}] (5) at (3.9,-0.7) {};
\draw (1)--(2) (2)--(1.55,0) (2.45,0)--(3) (3)--(4) (3)--(5);
\draw[frob] (4) to[bend left=50] (5);
\end{tikzpicture}
\\[9mm]
$\Ugrp_{2n}(q)$, type ${}^2\mathrm A_{2n-1}$ & $\Ugrp_{2n+1}(q)$, type ${}^2\mathrm A_{2n}$\\[1mm]
\begin{tikzpicture}[dyn]
\fill[endpt] (3.52,-0.28) rectangle (4.08,0.28);
\node[dn,label={[rl]above:$\alpha_1$}] (t1) at (0,0.55) {};
\node[dn,label={[rl]above:$\alpha_2$}] (t2) at (1,0.55) {};
\node at (2,0.55) {$\cdots$};
\node[dn,label={[rl]above:$\alpha_{n-1}$}] (t3) at (3,0.55) {};
\node[dn,label={[rl]below:$\alpha_{2n-1}$}] (b1) at (0,-0.55) {};
\node[dn,label={[rl]below:$\alpha_{2n-2}$}] (b2) at (1,-0.55) {};
\node at (2,-0.55) {$\cdots$};
\node[dn,label={[rl]below:$\alpha_{n+1}$}] (b3) at (3,-0.55) {};
\node[dn,label={[rl]right:$\alpha_n$}] (m) at (3.8,0) {};
\draw (t1)--(t2) (t2)--(1.55,0.55) (2.45,0.55)--(t3) (t3)--(m);
\draw (b1)--(b2) (b2)--(1.55,-0.55) (2.45,-0.55)--(b3) (b3)--(m);
\draw[frob] (t1)--(b1);
\draw[frob] (t2)--(b2);
\draw[frob] (t3)--(b3);
\end{tikzpicture}
&
\begin{tikzpicture}[dyn]
\fill[endpt] (2.72,-0.86) rectangle (3.66,0.86);
\node[dn,label={[rl]above:$\alpha_1$}] (t1) at (0,0.55) {};
\node[dn,label={[rl]above:$\alpha_2$}] (t2) at (1,0.55) {};
\node at (2,0.55) {$\cdots$};
\node[dn,label={[rl]above:$\alpha_n$}] (t3) at (3,0.55) {};
\node[dn,label={[rl]below:$\alpha_{2n}$}] (b1) at (0,-0.55) {};
\node[dn,label={[rl]below:$\alpha_{2n-1}$}] (b2) at (1,-0.55) {};
\node at (2,-0.55) {$\cdots$};
\node[dn,label={[rl]below:$\alpha_{n+1}$}] (b3) at (3,-0.55) {};
\draw (t1)--(t2) (t2)--(1.55,0.55) (2.45,0.55)--(t3);
\draw (b1)--(b2) (b2)--(1.55,-0.55) (2.45,-0.55)--(b3);
\draw (t3)--(b3);
\draw[frob] (t1)--(b1);
\draw[frob] (t2)--(b2);
\draw[frob] (t3) to[bend left=50] (b3);
\end{tikzpicture}
\end{tabular}
\caption{Absolute Dynkin diagrams and the Frobenius action. The simple roots are $\alpha_i=e_i-e_{i+1}$ for $i<n$, together with $\alpha_n=2e_n$ in type $\mathrm C_n$, $\alpha_n=e_n$ in type $\mathrm B_n$, and $\alpha_{n-1}=e_{n-1}-e_n$, $\alpha_n=e_{n-1}+e_n$ in types $\mathrm D_n$ and ${}^2\mathrm D_n$; in the unitary cases $\alpha_i=e_i-e_{i+1}$ for $1\le i\le N-1$. Dotted arrows show the permutation $\rho$ induced by $F$, namely $\alpha_{n-1}\leftrightarrow\alpha_n$ in type ${}^2\mathrm D_n$ and $\alpha_i\leftrightarrow\alpha_{N-i}$ in type ${}^2\mathrm A_{N-1}$; in type ${}^2\mathrm A_{2n-1}$ the node $\alpha_n$ is fixed. The $\rho$-orbits are the nodes of the relative diagrams, of types $\mathrm B_{n-1}$, $\mathrm C_n$ and $\mathrm{BC}_n$ in the three twisted cases. The endpoint orbits are shaded. For the even orthogonal groups we assume $n\ge2$; the diagrams are drawn for $n\ge4$, and for $n=2,3$ the numbering is read in the obvious way.}\label{fig:dynkin}
\end{figure}

Each $\mathbf P_{\mathcal O}^F$ is the stabilizer in $G$ of a totally isotropic subspace $E$, defined over $k=\BF_q$ in the bilinear cases and over $K=\BF_{q^2}$ in the unitary case. We put
$$
 a=\dim E,\qquad M=\dim E^\perp/E,
$$
the dimensions being taken over $k$ or $K$, respectively. Table~\ref{tab:range} lists $a$ and $M$ for every orbit $\mathcal O$. For a given group the orbit is determined by $a$, with one exception: in type $\mathrm D_n$ the two orbits $\{\alpha_{n-1}\}$ and $\{\alpha_n\}$ both have $a=n$. They correspond to the two families of maximal totally isotropic subspaces and give non-conjugate parabolic subgroups, whose radicals are both isomorphic to $\Alt_n(k)$.

\begin{table}[!htb]
\centering
\caption{Maximal rational parabolic subgroups $\mathbf P_{\mathcal O}^F$, in the numbering of Figure~\ref{fig:dynkin}. Here $a=\dim E$, $M=\dim E^\perp/E$, and $\mathcal Z$ is the central summand of the radical. Endpoint orbits are marked by an asterisk.}\label{tab:range}
\begin{tabular}{@{}llllll@{}}
\toprule
Group & Type & Orbit $\mathcal O$ & $a$ & $M$ & $\mathcal Z$\\
\midrule
$\Sp_{2n}(q)$ & $\mathrm C_n$ & $\{\alpha_a\}$ & $1\le a\le n-1$ & $2(n-a)$ & $\Sym_a(k)$\\
 & & $\{\alpha_n\}^\ast$ & $n$ & $0$ & $\Sym_n(k)$\\
\addlinespace
$\SO_{2n+1}(q)$ & $\mathrm B_n$ & $\{\alpha_a\}$ & $1\le a\le n-1$ & $2(n-a)+1$ & $\Alt_a(k)$\\
 & & $\{\alpha_n\}^\ast$ & $n$ & $1$ & $\Alt_n(k)$\\
\addlinespace
$\SO^+_{2n}(q)$ & $\mathrm D_n$ & $\{\alpha_a\}$ & $1\le a\le n-2$ & $2(n-a)$ & $\Alt_a(k)$\\
 & & $\{\alpha_{n-1}\}^\ast$, $\{\alpha_n\}^\ast$ & $n$ & $0$ & $\Alt_n(k)$\\
\addlinespace
$\SO^-_{2n}(q)$ & ${}^2\mathrm D_n$ & $\{\alpha_a\}$ & $1\le a\le n-2$ & $2(n-a)$ & $\Alt_a(k)$\\
 & & $\{\alpha_{n-1},\alpha_n\}^\ast$ & $n-1$ & $2$ & $\Alt_{n-1}(k)$\\
\addlinespace
$\Ugrp_{2n}(q)$ & ${}^2\mathrm A_{2n-1}$ & $\{\alpha_a,\alpha_{2n-a}\}$ & $1\le a\le n-1$ & $2(n-a)$ & $\SHerm_a(K)$\\
 & & $\{\alpha_n\}^\ast$ & $n$ & $0$ & $\SHerm_n(K)$\\
\addlinespace
$\Ugrp_{2n+1}(q)$ & ${}^2\mathrm A_{2n}$ & $\{\alpha_a,\alpha_{2n+1-a}\}$ & $1\le a\le n-1$ & $2(n-a)+1$ & $\SHerm_a(K)$\\
 & & $\{\alpha_n,\alpha_{n+1}\}^\ast$ & $n$ & $1$ & $\SHerm_n(K)$\\
\bottomrule
\end{tabular}
\end{table}

By the \emph{endpoint} of a diagram we mean the $\rho$-orbit or orbits at its right-hand end, shaded in Figure~\ref{fig:dynkin} and marked by an asterisk in Table~\ref{tab:range}; these are the orbits with the largest value of $a$. Explicitly, the endpoint is $\{\alpha_n\}$ in types $\mathrm C_n$, $\mathrm B_n$ and ${}^2\mathrm A_{2n-1}$, the two orbits $\{\alpha_{n-1}\}$ and $\{\alpha_n\}$ in type $\mathrm D_n$, the orbit $\{\alpha_{n-1},\alpha_n\}$ in type ${}^2\mathrm D_n$, and the orbit $\{\alpha_n,\alpha_{n+1}\}$ in type ${}^2\mathrm A_{2n}$. The endpoints behave differently in different types.
\begin{itemize}
\item In types $\mathrm C_n$, $\mathrm D_n$ and ${}^2\mathrm A_{2n-1}$ we have $M=0$, and the radical is the abelian group $\Sym_n(k)$, $\Alt_n(k)$ or $\SHerm_n(K)$, respectively.
\item In types $\mathrm B_n$, ${}^2\mathrm D_n$ and ${}^2\mathrm A_{2n}$ we have $M=1$, $2$ and $1$, respectively, and the radical has the same shape as at the other nodes. Note that the endpoint of type ${}^2\mathrm D_n$ has $a=n-1$: it is the stabilizer of a totally isotropic $(n-1)$-space whose residual plane is anisotropic.
\end{itemize}
If $M>0$, then $\mathbf P_{\mathcal O}$ stabilizes the flag $E\subset E^\perp$, with block sizes $(a,M,a)$, and we call its unipotent radical a \emph{three-block radical}. In the split group $\SO^+_{2n}(q)$ the stabilizer of a totally isotropic $(n-1)$-space corresponds to removing both $\alpha_{n-1}$ and $\alpha_n$, so it is not a maximal parabolic subgroup; its radical nevertheless appears in Section~\ref{sec:orthogonal}.

We write $\mathsf X\in\{\mathrm C,\mathrm B,\mathrm D^+,\mathrm D^-,\mathrm U\}$ for the type of $G$, where $\mathrm D^\pm$ refers to $\SO^\pm_{2n}(q)$ and $\mathrm U$ to unitary groups of either parity, and $R_u^{\mathsf X,a}$ for the unipotent radical of the maximal rational parabolic subgroup with isotropic dimension $a$. When $\mathsf X$ and $a$ are fixed we simply write $U=R_u^{\mathsf X,a}$ and
$$
 N_{U,e}(q)=\bigl|\{\chi\in\Irr(U):\chi(1)=q^e\}\bigr|.
$$

Each such $U$ has nilpotency class at most two. More precisely, in logarithmic coordinates its Lie algebra has the form
$$
 \mathfrak u=V\oplus\mathcal Z,
 \qquad [V,V]\subseteq\mathcal Z,\qquad
 [\mathfrak u,\mathcal Z]=0,
$$
where $V=\Mat_{a\times M}(k)$, or $\Mat_{a\times M}(K)$ in the unitary case, and $\mathcal Z$ is as in Table~\ref{tab:range}. If $M>0$, then $V$ and $\mathcal Z$ are the components of degree one and two of the parabolic root grading (or of its Frobenius-fixed analogue in the twisted cases). If $M=0$, then $V=0$ and $U=\mathcal Z$ is abelian. Using the trace pairings of Section~\ref{sec:characters} we identify $\mathcal Z^*$ with matrices of the same type as those in $\mathcal Z$, and we write $\lambda_T\in\mathcal Z^*$ for the functional attached to such a matrix $T$. Each $T$ gives an alternating form on $V$,
$$
 B_T(X,Y)=\lambda_T([X,Y]),\qquad
 R_T=\rad B_T=\{X\in V:TX=0\},\qquad
 2e_T=\rank_{\BF_q}B_T.
$$
Throughout, $\psi$ is a fixed nontrivial additive character of $\BF_q$.

\subsection{Main results}
Our first result describes all irreducible characters.

\begin{theorem}\label{thm:main}
Let $U=R_u^{\mathsf X,a}$ be one of the radicals above. There is a bijection
$$
 \Irr(U)\longleftrightarrow\coprod_{T\in\mathcal Z^*}R_T^*,
 \qquad R_T^*=\Hom_{\BF_q}(R_T,\BF_q),
$$
where $T$ runs over all elements of $\mathcal Z^*$ (not over congruence classes). The character $\chi_{T,\eta}$ corresponding to $\eta\in R_T^*$ has degree $q^{e_T}$, and
$$
 \chi_{T,\eta}(\exp(X+Z))=
 \begin{cases}
 q^{e_T}\psi\bigl(\lambda_T(Z)+\eta(X)\bigr),&X\in R_T,\\
 0,&X\notin R_T.
 \end{cases}
$$
\end{theorem}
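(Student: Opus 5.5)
We prove Theorem~\ref{thm:main} by reducing to a Heisenberg-type computation, using that $U$ has class at most two and $p$ is odd. Since $p\neq2$ and $[V,V]\subseteq\mathcal Z$ is central, the truncated Baker--Campbell--Hausdorff formula $\exp(X)\exp(Y)=\exp\bigl(X+Y+\tfrac12[X,Y]\bigr)$ defines the group law on $\mathfrak u=V\oplus\mathcal Z$. In particular $\exp(\mathcal Z)=Z_0$ is a central subgroup, and every element of $U$ is written uniquely as $\exp(X+Z)$. If $M=0$, then $U=\mathcal Z$ is abelian, $R_T=0$ and $e_T=0$, so the claim is just that the characters of $\mathcal Z$ are $\psi\circ\lambda_T$. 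This holds once we know that the trace pairing identifies $\mathcal Z^*$ with $\mathcal Z$ nondegenerately, which I take from Section~\ref{sec:characters}. The same identification will be used when $M>0$.

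For $M>0$ we partition $\Irr(U)$ by central characters. Every linear character of $Z_0$ has the form $\psi\circ\lambda_T$ for a unique $T$, so $\Irr(U)=\coprod_T\Irr(U\mid\psi\circ\lambda_T)$. Fix $T$. The key step is to construct a subgroup $A_T\supseteq Z_0\exp(R_T)$ that is abelian modulo $\ker$ and polarizes $B_T$. Choose a maximal isotropic subspace $L\subseteq V$ for $B_T$ containing $R_T$; then $\dim L=\dim R_T+e_T$. The set $A=\exp(L\oplus\mathcal Z)$ is a subgroup, since $L$ is a subspace and the commutator term lies in $\mathcal Z$. A linear functional $\eta'$ on $L$ gives a linear character $\exp(X+Z)\mapsto\psi(\lambda_T(Z)+\eta'(X))$ of $A$. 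Checking that this is multiplicative uses $\lambda_T([X,Y])=B_T(X,Y)=0$ for $X,Y\in L$. Inducing it to $U$ gives a character of degree $|V/L|=q^{e_T}$.

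Rather than proving irreducibility directly, I would compute the induced values and then count. By the induction formula, the induced character vanishes off $\exp(R_T+\mathcal Z)$. The reason is that conjugation satisfies $\exp(Y)\exp(X+Z)\exp(-Y)=\exp(X+Z+[Y,X])$, and when $X\notin R_T$ the sum $\sum_{Y}\psi(\lambda_T([Y,X]))$ over a complement of $L$ vanishes. For $X\in R_T$ the induced value is $q^{e_T}\psi(\lambda_T(Z)+\eta(X))$ with $\eta=\eta'|_{R_T}$. From these values we get $\langle\chi,\chi\rangle=q^{2e_T}\,|\exp(R_T+\mathcal Z)|/|U|=1$, because $|V|=q^{2e_T}|R_T|$. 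Hence each $\chi_{T,\eta}$ is irreducible with the stated formula. The same computation shows that $\chi_{T,\eta}$ depends only on $(T,\eta)$. Distinct pairs give distinct characters, by orthogonality of $\psi$ on $\mathcal Z\oplus R_T$.

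To see that these are all the irreducible characters, we compare degrees: $\sum_T\sum_{\eta}q^{2e_T}=\sum_T|R_T|\,q^{2e_T}=\sum_T|V|=|\mathcal Z|\,|V|=|U|$. Completeness then follows from $\sum\chi(1)^2=|U|$. One genuine point to verify is the identification $R_T=\rad B_T=\{X:TX=0\}$ and the realization of $\lambda_T$ by the trace pairing in each of the five types. I regard this as given by the definitions preceding the theorem. In the unitary case it needs care because $V$ is a $K$-space, $B_T$ is only $\BF_q$-bilinear, and $R_T^*$ is the $\BF_q$-dual. The main obstacle is the nondegeneracy of the pairing $\mathcal Z\times\mathcal Z^*\to\BF_q$, particularly for $\Alt$, $\Sym$ and $\SHerm$. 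I would handle it by the explicit choice of trace form $\lambda_T(Z)=\tfrac12\tr(TZ)$ or $\Tr_{K/k}\tr(TZ)$, using that $p$ is odd.
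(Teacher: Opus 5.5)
Your argument is correct, but it takes a different route from the paper.

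The paper does not construct the characters by hand. It quotes the Kirillov orbit method for groups of class less than $p$ \cite[Theorem~3.1]{OV} and computes the coadjoint orbit of $(\xi,\lambda)$ to be $(\xi+R_\lambda^\perp)\times\{\lambda\}$, labelled by $(\lambda,\xi|_{R_\lambda})$. It then gets the values from Kirillov's formula by orthogonality of characters on $R_\lambda^\perp$. Proposition~\ref{prop:rank} supplies $R_T=\{X:TX=0\}$ and $\rank B_T$.

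You instead prove the class-two orbit method directly. You induce a linear character from $\exp(L\oplus\mathcal Z)$ for a polarization $L\supseteq R_T$ chosen depending on $T$. You compute the induced values: the subgroup contains $\exp(\mathcal Z)\supseteq[U,U]$, so it is normal and the value vanishes for $X\notin L$, a case you should state explicitly; the character sum over a complement of $L$ handles $X\in L\setminus R_T$. You then deduce $\langle\chi,\chi\rangle=1$ and get completeness from $\sum\chi(1)^2=|U|$.

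Your route needs only the truncated Baker--Campbell--Hausdorff formula and the induction formula. Because $L$ varies with $T$, it applies uniformly, including type $\mathrm B$ with $a=n=3$ and the endpoint of $\Ugrp_{2n+1}(q)$, where the fixed decomposition $U=A\rtimes H$ of Section~\ref{sec:clifford} does not exist. The paper uses polarizations only in that section, again via \cite[Theorem~3.1(2)]{OV}. The paper's route is shorter and gives the coadjoint-orbit picture behind Proposition~\ref{prop:parameter-varieties}.

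Two minor points. With $R_T$ read as $\rad B_T$ your proof is complete, but the description $\{X:TX=0\}$ is a separate computation (Proposition~\ref{prop:rank}), not something contained in the definitions. Nondegeneracy of the trace pairings is not a real obstacle: in the unitary case the paper uses $\lambda_T(Z)=\tr(TZ)$, which already lies in $k$, so your $\Tr_{K/k}$ only rescales by $2$.
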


Let $N(a,r)$, $S(a,2s)$ and $C_{\mathrm{sh}}(a,r)$ denote the numbers of $a\times a$ symmetric matrices of rank $r$, of skew-symmetric matrices of rank $2s$, and of skew-Hermitian matrices of rank $r$, respectively; explicit formulas are recalled in Lemma~\ref{lem:counts}.

\begin{theorem}\label{thm:main-degrees}
Suppose that $M>0$; in type $\mathrm C$ the residual dimension is even, and we write $M=2m$. Then the nonzero character-degree multiplicities of $U$ are the following.
\begin{center}
\begin{tabular}{@{}llll@{}}
\toprule
Type & Rank of $T$ & Degree & Number of characters\\
\midrule
$\mathrm C$ & $0\le r\le a$ & $q^{mr}$ & $q^{2m(a-r)}N(a,r)$\\
$\mathrm B,\mathrm D^+,\mathrm D^-$ & $0\le 2s\le a$ & $q^{Ms}$ & $q^{M(a-2s)}S(a,2s)$\\
$\mathrm U$ & $0\le r\le a$ & $q^{Mr}$ & $q^{2M(a-r)}C_{\mathrm{sh}}(a,r)$\\
\bottomrule
\end{tabular}
\end{center}
In particular $N_{U,e}(q)\in\BN[q-1]$ for every $e$, so these families satisfy the analogues of the conjectures of Lehrer and Isaacs. If $M=0$, then $U$ is abelian and all its irreducible characters are linear.
\end{theorem}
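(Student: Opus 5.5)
We derive Theorem~\ref{thm:main-degrees} from Theorem~\ref{thm:main}. That theorem gives a bijection $\Irr(U)\leftrightarrow\coprod_T R_T^*$, and the character $\chi_{T,\eta}$ has degree $q^{e_T}$. The number of characters attached to a fixed $T$ is therefore $|R_T^*|=|R_T|$, where the cardinality is taken over $\BF_q$. So it suffices to compute, for each $T\in\mathcal Z^*$, the size of $R_T=\{X\in V: TX=0\}$ and the half-rank $e_T=\tfrac12(\dim_{\BF_q}V-\dim_{\BF_q}R_T)$. We then group the $T$ by rank, since both quantities depend only on the rank of $T$.

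For a fixed $T$ of rank $r$, the space $R_T$ consists of the $X\in\Mat_{a\times M}$ whose columns all lie in $\ker T$. We have $\dim\ker T=a-r$ over the relevant field. In type $\mathrm C$, with $V=\Mat_{a\times 2m}(k)$, this gives $|R_T|=q^{2m(a-r)}$. Since $\dim V=2ma$, we get $2e_T=2mr$, so $e_T=mr$.

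In types $\mathrm B,\mathrm D^\pm$, $T$ is skew-symmetric of rank $2s$. Then $|R_T|=q^{M(a-2s)}$ and $2e_T=2sM$, so $e_T=Ms$.

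In the unitary case $V=\Mat_{a\times M}(K)$ has $\BF_q$-dimension $2aM$. Here $\ker T$ has $K$-dimension $a-r$, so $|R_T|=q^{2M(a-r)}$ and $2e_T=2Mr$, giving $e_T=Mr$.

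The one point to verify is that the identification $R_T=\{X:TX=0\}$ (stated in the setup) is compatible with the trace pairing. This is where I expect a possible obstacle, in particular in the unitary case, where $\lambda_T$ involves $\operatorname{Tr}_{K/k}$. I would check it by writing $[X,Y]$ explicitly, for instance $XJY^*-YJ^{\!*}X^*$ up to the form matrix $J$ of the residual space. One then sees that $B_T(X,Y)=\operatorname{Tr}(\tr(TXJY^*))$ up to sign and conjugation. Nondegeneracy of $J$ and of the trace form then show that the radical is exactly $\{X: TX=0\}$. I take this as given from Section~\ref{sec:characters}. Summing over $T$ of each rank multiplies $|R_T|$ by $N(a,r)$, $S(a,2s)$ or $C_{\mathrm{sh}}(a,r)$. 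Distinct ranks give distinct degrees when $M>0$ (respectively $m>0$), so these sums are exactly the multiplicities in the table.

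For positivity, $q^{j}=((q-1)+1)^j\in\BN[q-1]$, so it suffices that $N(a,r)$, $S(a,2s)$ and $C_{\mathrm{sh}}(a,r)$ lie in $\BN[q-1]$. I would use the explicit formulas of Lemma~\ref{lem:counts}. Each of them is a power of $q$ times a product of factors $q^{i}-1$ (or $q^{2i}-1$), divided by Gaussian-type denominators, and can be rewritten as a power of $q$ times a product of Gaussian binomial factors and factors $q^i-1$. Let me sketch this for skew-symmetric matrices. Choosing the $2s$-dimensional image and then a nondegenerate alternating form on it gives
$$S(a,2s)=\gauss{a}{2s}{q}\,q^{s(s-1)}\prod_{i=1}^{s}(q^{2i-1}-1).$$
Gaussian binomials have nonnegative coefficients in $q$, hence also in $q-1$. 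Each factor $q^j-1=(q-1)(1+q+\dots+q^{j-1})$ is also in $\BN[q-1]$. The symmetric and skew-Hermitian counts are handled the same way. For the skew-Hermitian case one uses the count of nondegenerate Hermitian forms $q^{r(r-1)/2}\prod_{i=1}^{r}(q^i-(-1)^i)$. The factors $q^i+1$ for odd $i$ are in $\BN[q-1]$, and the Gaussian binomial is in $q^2$. For symmetric matrices the count of nondegenerate forms involves both similarity classes, but it still factors as a power of $q$ times products of $q^{2i}-1$, and possibly one extra factor $q^{j}-1$. If it does not factor this cleanly, I would instead prove nonnegativity through the recursion for $N(a,r)$ (adding a row and column), which has nonnegative coefficients in $q-1$. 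Finally, if $M=0$ then $V=0$, so $e_T=0$ and all characters are linear.
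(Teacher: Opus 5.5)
Your proposal is correct and follows the paper's proof essentially verbatim. Each central parameter $T$ contributes $|R_T^*|=|R_T|$ characters of degree $q^{e_T}$, read off from Proposition~\ref{prop:rank}; you sum over ranks using Lemma~\ref{lem:counts}, and positivity comes from $\gauss{a}{r}{Q}\in\BN[Q]$ together with $(1+t)^j\pm1\in\BN[t]$.

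One small slip: the number of invertible $r\times r$ Hermitian (equivalently skew-Hermitian) matrices is $q^{r(r-1)/2}\prod_{i=1}^r(q^i+(-1)^i)$, not $q^{r(r-1)/2}\prod_{i=1}^r(q^i-(-1)^i)$; already for $r=1$ it is $q-1$. This does not affect the argument, since both $q^i-1$ and $q^i+1$ lie in $\BN[q-1]$. Your fallback for the symmetric case is also unnecessary, because the formula for $N(a,r)$ in Lemma~\ref{lem:counts} is already a Gaussian binomial times a power of $q$ times factors $q^{2i-1}-1$.
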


The theorem covers in particular the three endpoints with $M>0$, namely those of types $\mathrm B_n$, ${}^2\mathrm D_n$ and ${}^2\mathrm A_{2n}$; for the last one see also Proposition~\ref{unitary:odd-endpoint}.

Recall that a \emph{Brauer pair} is a pair of nonisomorphic groups with identical character tables and identical power maps. Brauer asked whether such pairs exist \cite{Brauer}; the first examples were given by Dade \cite{Dade}, and many more are known, see for instance \cite{EickMueller,Lewis}. The point of the next theorem is not the existence of Brauer pairs as such, but that they arise naturally from parabolic subgroups: the character table of the unipotent radical, even together with its power maps, does not determine the Witt type of the ambient even orthogonal group.

\begin{theorem}\label{thm:intro-witt}
Fix $a$ and $M$, and let $U_{a,J}$ be the orthogonal radicals of Section~\ref{sec:model}, where $J$ runs over the invertible symmetric $M\times M$ matrices.
\begin{enumerate}
\item The character table, the class sizes and the power maps of $U_{a,J}$ do not depend on $J$.
\item If $a\ge3$ and $M$ is even, then $U_{a,J}\simeq U_{a,J'}$ if and only if $J$ and $J'$ have the same Witt type. In all other cases the groups $U_{a,J}$ are all isomorphic.
\end{enumerate}
In particular, let $n\ge4$ and $3\le a\le n-1$. Then the unipotent radicals of the stabilizers of a totally isotropic $a$-space in $\SO^+_{2n}(q)$ and in $\SO^-_{2n}(q)$ form a Brauer pair of order $q^{2a(n-a)+a(a-1)/2}$. If $a\le n-2$, both stabilizers are maximal parabolic subgroups; if $a=n-1$, the second one is the endpoint of type ${}^2\mathrm D_n$ and the first one is not maximal.
\end{theorem}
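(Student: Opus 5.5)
The plan is to work throughout in the model of Section~\ref{sec:model}. In that model $U_{a,J}$ is the set $V\oplus\mathcal Z$, with $V=\Mat_{a\times M}(k)$ and $\mathcal Z=\Alt_a(k)$. The multiplication is the Baker--Campbell--Hausdorff law $(X,Z)(Y,W)=(X+Y,\,Z+W+\tfrac12[X,Y])$, which is valid because $p$ is odd and the class is at most two. The bracket is $[X,Y]=XJY^{T}-YJX^{T}$.

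For part (1), all the groups $U_{a,J}$ share one underlying set, so I would compare every invariant through the identity map $\exp(X+Z)\mapsto\exp(X+Z)$.
\begin{itemize}
\item Power maps: $\exp(v)^m=\exp(mv)$ in every $U_{a,J}$, so they agree.
\item Conjugacy classes: the class of $\exp(X+Z)$ is $\exp\bigl(X+Z+[X,V]\bigr)$. Substituting $W=YJ$, which is bijective because $J$ is invertible, gives $[X,V]=\{XW^T-WX^T:W\in V\}$. This does not involve $J$, so the classes and class sizes agree.
\item Characters: by Theorem~\ref{thm:main}, the characters are indexed by pairs $(T,\eta)$ with $\eta\in R_T^*$. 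The subspace $R_T=\{X:TX=0\}$ does not depend on $J$, and the value formula involves only $\psi$, $\lambda_T$, $\eta$ and $R_T$.
\end{itemize}
Hence the character tables coincide under this identification.

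For part (2), I would first dispose of the easy cases. If $a=1$, then $\mathcal Z=0$ and the group is elementary abelian. If $a=2$, then $\mathcal Z$ is one-dimensional. The commutator form is $(x\otimes u,y\otimes v)\mapsto\det(x,y)\,u^TJv$, which is a nondegenerate alternating $\BF_q$-form on a $2M$-dimensional space, and all such forms are equivalent. So all these groups are isomorphic to a single Heisenberg-type group.

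If $a\ge3$ and $M$ is odd, I would use that any two nondegenerate symmetric forms of odd dimension are similar. Indeed, scaling by a nonsquare $c$ multiplies the discriminant by $c^M$, a nonsquare, so there are $g\in\GL_M(k)$ and $c\in k^\times$ with $gJg^T=cJ'$. Then $X\mapsto Xg^{-1}$ and $Z\mapsto c^{-1}Z$ define an isomorphism of Lie rings, hence of groups. The same construction shows that $J$ and $J'$ of the same Witt type give isomorphic groups in the even case.

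The substantive direction is that for $a\ge3$ and $M$ even, an isomorphism $U_{a,J}\simeq U_{a,J'}$ forces the same Witt type. Since $p$ is odd, the Lazard correspondence for class two turns a group isomorphism into an isomorphism of the associated $\BF_p$-Lie rings. For $a\ge2$ the centre is exactly $\mathcal Z$, so we get an additive bijection $\phi:V\to V$ and $\phi_0:\mathcal Z\to\mathcal Z$ with $\phi_0([X,Y]_J)=[\phi X,\phi Y]_{J'}$. The main obstacle is to show that $\phi$ respects the tensor structure $V=k^a\otimes k^M$. I would argue in four steps.
\begin{itemize}
\item For $X=x\otimes u$ of rank one, $[X,V]=x\wedge k^a$ has dimension $a-1$. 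For $\rank X\ge2$ the dimension is at least $2a-3$, which is larger when $a\ge3$. So the rank-one matrices are characterised intrinsically, as those with $\lvert[X,V]\rvert$ minimal.
\item For $a\ge3$, the subspace $x\wedge k^a$ determines the line $kx$. So $\phi$ permutes the fibres $\{x\otimes u:u\in k^M\}$, and on each fibre it is additive.
\item Comparing brackets of $x\otimes u$ with $y\otimes v$ for independent $x,y$ is where the $k$-structure enters: $[x\otimes u,y\otimes v]_J=(u^TJv)(xy^T-yx^T)$. Using that $\phi_0$ is additive and that the scalar $u^TJv$ ranges over $k$, I would deduce that $\phi$ is $\sigma$-semilinear for some $\sigma\in\operatorname{Aut}(k)$, of the form $x\otimes u\mapsto hx\otimes gu$ up to scalars.
\item This yields $g\in\GL_M(k)$ and $c\in k^\times$ with $gJg^T=c\,\sigma(J)$.
\end{itemize}
Since $\sigma$ preserves square classes, $\sigma(J)$ has the same Witt type as $J$. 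Since $M$ is even, scaling by $c$ multiplies the discriminant by $c^M$, a square, and preserves the Witt type of an even-dimensional form. Hence $J$ and $J'$ have the same Witt type. This semilinearity step is where I expect the real work to lie. If the direct argument is awkward, I would first try computing the centroid of the Lie ring to recover the $k$-structure.

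For the final assertion, the stabilizer of a totally isotropic $a$-space in $\SO^\pm_{2n}(q)$ has radical $U_{a,J}$ with $M=2(n-a)\ge2$ even, and $J$ of Witt type $\pm$. Parts (1) and (2) with $a\ge3$ then give a Brauer pair. Its order is $q^{aM+a(a-1)/2}=q^{2a(n-a)+a(a-1)/2}$. The statements about maximality are read off from Table~\ref{tab:range} and the discussion after it.
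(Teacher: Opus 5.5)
Your part (1), the cases $a\le2$ and $M$ odd, the ``if'' direction via $(X,Z)\mapsto(Xg^{-1},c^{-1}Z)$, and the final parabolic statement are fine and agree with the paper. The gap is in the one substantive claim: $U_{a,J}\not\simeq U_{a,J'}$ when $a\ge3$, $M=2m$ is even, and $J,J'$ have different Witt types. Your third bullet asserts that $\phi$ is $\sigma$-semilinear of the form $x\otimes u\mapsto hx\otimes gu$ up to scalars, but you give no argument, and it does not follow from the first two bullets. Those bullets only show that $\phi$ permutes the fibres $x\otimes k^M$. To reach tensor form you would still have to prove three things: that $\phi$ also preserves the other family $k^a\otimes u$, that the induced maps on the second factor agree across fibres with coherent scalars, and that they are semilinear. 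For $M=2$ there is no fundamental theorem of projective geometry on $\mathbb P(k^M)$ to appeal to. Computing the centroid would at most recover $\sigma$, not the relation $gJg^T=c\,\sigma(J)$. As it stands, the hard direction is unproved.

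The missing idea is that no rigidity of $\phi$ is needed. It suffices to find an isomorphism invariant that detects isotropic vectors, and your first two bullets already supply the ingredients. Let $X=x\otimes u$ have rank one, and let $Y=y\otimes w$ be of rank one with $y\notin kx$. Then $\rank(X+Y)\le1$ forces $w=cu$ with $c\ne0$, and in that case $[X,Y]=c\,(u^TJu)(xy^T-yx^T)$. Hence a $Y$ outside the fibre of $X$ with $[X,Y]=0$ and $\rank(X+Y)\le1$ exists if and only if $u$ is $J$-isotropic.

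Each of these conditions is preserved by $\phi$:
\begin{itemize}
\item rank at most one is characterised by $|[X,V]|\le q^{a-1}$;
\item $\phi$ is additive and respects commutation;
\item $\phi$ permutes the fibres.
\end{itemize}
So $\phi$ maps the rank-one matrices with isotropic second factor for $J$ bijectively onto those for $J'$. There are $\frac{q^a-1}{q-1}\,n_\pm(m)$ of them, where $n_\pm(m)=(q^m\mp1)(q^{m-1}\pm1)$ is the number of nonzero isotropic vectors, and $n_+(m)\ne n_-(m)$.

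This is exactly the paper's proof of Theorem~\ref{orth:family}. There the invariant is the number of $x\in U$ with $|x^U|=q^{a-1}$ and $|S(x)|=q^{\binom a2}(q^M+q^a-q)$, where $S(x)$ is defined using only commutation and class sizes. That formulation needs neither the Lazard correspondence nor the $k$-linear structure, which is what makes the argument valid for nonprime $q$.
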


The smallest member of the family has $n=4$, $a=3$ and order $q^9$; the smallest member in which both stabilizers are maximal has $n=5$, $a=3$ and order $q^{15}$.

Finally, the character parameters can be organized geometrically, which explains the positivity in Theorem~\ref{thm:main-degrees}.

\begin{theorem}\label{thm:intro-geometry}
Let $d=\dim_{\BF_q}V$, and let $\mathcal Y_e\subseteq\mathcal Z^*$ be the reduced locally closed locus where $\rank B_T=2e$.
\begin{enumerate}
\item The spaces $R_T$ form a vector bundle $\mathcal R_e$ of rank $d-2e$ over $\mathcal Y_e$, and the rational points of $\mathcal M_e=\operatorname{Tot}(\mathcal R_e^\vee)$ correspond bijectively to the irreducible characters of $U$ of degree $q^e$. Hence
$$
 N_{U,e}(q)=|\mathcal M_e(\BF_q)|=q^{d-2e}|\mathcal Y_e(\BF_q)|.
$$
\item Suppose $M>0$. Then every nonempty $\mathcal Y_e$ is an exact-rank locus of symmetric, skew-symmetric or skew-Hermitian matrices. The symmetric and skew-symmetric bounded-rank loci are the intersections, as reduced subvarieties, of Schubert varieties with the opposite big cell in a Lagrangian or orthogonal Grassmannian, and the point counts of the exact-rank loci are finite sums of $R$-polynomials of type $\mathrm C$ or $\mathrm D$.
\item In the unitary case the rank loci are the $\BF_q$-forms, for the unitary Frobenius, of ordinary determinantal varieties, and their point counts are finite sums of $R$-polynomials of $W(\mathrm C_a)$ with the unequal parameters $q^2,\ldots,q^2,q$.
\end{enumerate}
Via Deodhar's decomposition in the bilinear cases, and via the expansion of the unequal-parameter $R$-polynomials along weighted distinguished subexpressions in the unitary case, these formulas express the coefficients of $N_{U,e}(q)$ in powers of $q-1$ as sums of products of binomial coefficients (and powers of $2$ in the unitary case); in particular they are nonnegative. If $M=0$, then $\mathcal Y_0=\mathcal Z^*$ and $U$ is abelian.
\end{theorem}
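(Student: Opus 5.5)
We prove part (1) first; it is the foundation for (2) and (3).

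\emph{Part (1).} By Theorem~\ref{thm:main}, the characters of degree $q^e$ are the pairs $(T,\eta)$ with $2e_T=2e$ and $\eta\in R_T^*$. The first step is to show that $T\mapsto R_T$ is a vector bundle on $\mathcal Y_e$. We view $B_T$ as a linear map $V\to V^*$ whose entries are linear in $T$. On $\mathcal Y_e$ this map has constant rank $2e$, so its kernel $R_T=\{X:TX=0\}$ is a locally free subsheaf of the trivial bundle $\mathcal Y_e\times V$, of rank $d-2e$. Concretely, local freeness follows because the cokernel of a constant-rank morphism of vector bundles on a reduced scheme is locally free. The total space of the dual bundle then has $\BF_q$-points equal to pairs $(T,\eta)$ with $\eta\in R_T^*$. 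Since $\mathcal Y_e$ is defined over $\BF_q$, Lang's theorem (or simply the fact that $R_T$ is the kernel of a matrix over $\BF_q$) shows that the fibre over a rational $T$ is $R_T^*$ with $q^{d-2e}$ points. This gives the displayed formula.

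\emph{Part (2).} Here we use the explicit shape $R_T=\{X:TX=0\}$ with $V=\Mat_{a\times M}$. The rank of $B_T$ is then a fixed multiple of $\rank T$: it equals $M\rank T$ in types B/D and $2m\rank T$ in type C, as in Theorem~\ref{thm:main-degrees}. So $\mathcal Y_e$ is the exact-rank locus $\{\rank T=r\}$ in $\Sym_a$, $\Alt_a$ or $\SHerm_a$. For the Schubert description, we embed $\Sym_a$ (respectively $\Alt_a$) into the Lagrangian Grassmannian $LG(a,2a)$ (respectively the orthogonal Grassmannian $OG(a,2a)$) by sending $T$ to its graph. This map is exactly the opposite big cell relative to the fixed Lagrangian $L_0=0\oplus k^a$. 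The condition $\rank T\le r$ is equivalent to $\dim(\Gamma_T\cap L_0')\ge a-r$, where $L_0'=k^a\oplus0$. That is a Schubert condition with respect to a flag containing $L_0'$, so the bounded-rank locus is (Schubert variety)$\cap$(opposite cell). Reducedness follows because Richardson-type intersections of a Schubert variety with an opposite cell are reduced (Kazhdan--Lusztig / Deodhar). The point counts of such open Richardson pieces are $R$-polynomials $R_{v,w}(q)$. The exact-rank locus is a difference of bounded-rank loci, hence a union of Schubert cells $C_w$ intersected with the opposite cell, and its count is $\sum_w R_{e,w}$ (up to the appropriate $W_P$ cosets). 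Alternatively, we can cross-check the counts directly against $N(a,r)$ and $S(a,2s)$ from Lemma~\ref{lem:counts}.

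\emph{Part (3).} Over $\overline{\BF}_q$, skew-Hermitian $a\times a$ matrices become all of $\Mat_a$ with the twisted Frobenius $T\mapsto -\overline T^{\,t}$. The rank loci are therefore forms of determinantal varieties, which are the bounded-rank loci of the quasi-split unitary situation. The relevant Weyl group is the relative one, $W(\mathrm C_a)$. Counting $\BF_q$-points of Schubert-cell intersections for the non-split Frobenius produces $R$-polynomials with parameters $q^2$ for the reflections coming from pairs of absolute roots and $q$ for the fixed one, as in the theory of unequal-parameter Hecke algebras for ${}^2\mathrm A$.

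\emph{Positivity and the main obstacle.} The final positivity claim comes from Deodhar's formula $R_{v,w}=\sum_{\sigma}(q-1)^{n(\sigma)}q^{m(\sigma)}$ over distinguished subexpressions. Substituting $q=(q-1)+1$ then gives nonnegative coefficients. In the unequal case, each $q^2-1=(q-1)(q+1)=(q-1)((q-1)+2)$ produces the powers of $2$. I expect the main obstacle to be part (3): making the unitary Frobenius compatible with a Schubert stratification and verifying the weighted Deodhar expansion. My first attempt there would be a direct computation, checking that the resulting sums reproduce $C_{\mathrm{sh}}(a,r)$ by induction on $a$.
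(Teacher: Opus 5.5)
Parts (1) and (2) follow the paper's route. The kernel of a constant-rank map over the reduced base $\mathcal Y_e$ is locally free. The graph chart $T\mapsto\Gamma_T$ of the opposite cell, with $\Gamma_T\cap E=\ker T$, turns $\rank T\le r$ into the Schubert condition $\dim(\Lambda\cap E)\ge a-r$; in type $\mathrm D$ this takes place inside the component of $\operatorname{OG}(a,2a)$ containing $E$. Reducedness needs no Richardson theory, since $X_P(w_r)\cap\mathcal C^-$ is open in a Schubert variety.

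One correction in part (2): your ``$\sum_wR_{e,w}$ up to the appropriate $W_P$ cosets'' has to be the statement of Lemma~\ref{geo:projection}. That is, $|(ByP/P\cap\mathcal C^-)(k)|=\sum_{v\in W_P,\,v\le y}R_{v,y}(q)$ for $y\in W^P$, with the sum over the \emph{opposite} index. Summing $R_{1,w}$ over $w\in yW_P$ instead counts the full preimage in $G/B$. That is larger by the factor $q^{a(a-1)/2}$, since the fibre is the lower unitriangular group of the Levi factor $\GL_a$. After dividing out a power of $q$, the Deodhar expansion no longer exhibits nonnegativity in $q-1$.

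The genuine gap is part (3), as you suspect. The sentence that counting $F$-fixed points ``produces'' $R$-polynomials with parameters $q^2,\ldots,q^2,q$ restates the claim rather than proving it. Three ingredients are missing.

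\emph{First}, you need a flag variety on which the unitary Frobenius respects the rank stratification. The paper uses $\GL_{2a}$ with $F(g)=J_0^{-1}(g^{(q)t})^{-1}J_0$ for the hyperbolic Hermitian form on $E\oplus E^\vee$. Then $B$, $B^-$, $P$ and $\mathcal C^-$ are $F$-stable and $F(\Gamma_T)=\Gamma_{-T^{(q)t}}$. The skew-Hermitian matrices of rank $r$ are then the $F$-fixed points of $(X_P(\widetilde w_r)\setminus X_P(\widetilde w_{r-1}))\cap\mathcal C^-$, with $\widetilde w_r$ fixed by $\sigma$.

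\emph{Second}, you need a folding step. $F$ maps $B\widetilde yB/B$ onto $B\sigma(\widetilde y)B/B$, so rational points lie only in $\sigma$-fixed Schubert and opposite cells. You must know that these indices are $\iota(W(\mathrm C_a))$, compatibly with Bruhat order, with $W_P$ and with minimal coset representatives (Hultman).

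\emph{Third}, and most important, you need the key lemma $|(B\iota(y)B/B\cap B^-\iota(v)B/B)^F|=R^L_{v,y}(q)$ with $L=(2,\ldots,2,1)$. The paper proves it by verifying the recursion $R^L_{v,y}=q^{L(s)}R^L_{vs,ys}+(q^{L(s)}-1)R^L_{v,ys}$ on rational $s$-panels of $\Ugrp_{2a}(q)$, which contain $q^2+1$ or $q+1$ chambers. Knowing the parameters of the Hecke algebra of ${}^2\mathrm A_{2a-1}$ does not by itself show that its $R$-polynomials count these fixed points.

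Your fallback, checking that the sums equal $C_{\mathrm{sh}}(a,r)$ by induction on $a$, is a combinatorial identity with no evident inductive mechanism. It does not replace the key lemma.

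Unitary positivity also cannot come from a Deodhar cell decomposition over $k$. By Remark~\ref{geo:extensions}, no partition of $\mathcal X_1$ into pieces $\mathbb A^b\times\mathbb G_m^c$ exists. The weighted expansion $\sum_\gamma q^{b_L(\gamma)}(q-1)^{a_1(\gamma)}(q^2-1)^{a_2(\gamma)}$ must instead come from unwinding the recursion along a reduced word in the $t_i$, so it too rests on the key lemma.

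Finally, to identify $\mathcal Y_{Mr}$ with the $k$-form $\mathcal X_r$ as reduced varieties, you need the rank of $B_T$ at geometric points, not only for $T\in\SHerm_a(K)$. Over $\overline k$ the parameter becomes $(A,-A^t)$, and $B_T$ has rank $2M\rank A$.
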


\subsection{Methods and relation to earlier work}
Theorems~\ref{thm:main} and~\ref{thm:main-degrees} rest on well-known tools, and we want to be explicit about this. Since $U$ has nilpotency class at most two and $p$ is odd, the Lazard correspondence \cite{Lazard} identifies $U$ with its Lie algebra, and the Kirillov orbit method \cite{OV} describes $\Irr(U)$ by coadjoint orbits. For groups of class at most two the orbits are easy to determine: a central parameter $T$ defines the alternating form $B_T$, the rank of $B_T$ gives the degree, and the radical $R_T$ carries the remaining linear parameter. The multiplicities then follow from the classical counts of symmetric, skew-symmetric and Hermitian matrices of given rank over a finite field \cite{Carlitz,CarlitzHodges,MacWilliams}, and $(q-1)$-positivity can be read off from the product formulas after substituting $q=1+t$. The description of symmetric and skew-symmetric rank loci by Schubert varieties in opposite cells, together with a count of their points by parabolic $R$-polynomials, already appears in \cite[Section~3.3]{LewisEtAl}.

Beyond these standard ingredients, the paper contains the following.
\begin{enumerate}
\item Explicit character values and degree multiplicities for all three-block radicals of maximal rational parabolic subgroups, including type $\mathrm B$, the non-split orthogonal groups and the unitary groups of both parities (Theorems~\ref{thm:main} and~\ref{thm:main-degrees}). The method is the standard one; what we add is the uniform, explicit result.
\item For the orthogonal radicals, the character tables and power maps do not distinguish the Witt type of the residual form (Theorem~\ref{orth:table}), whereas the groups themselves distinguish the two Witt types when $a\ge3$ and $M>0$ is even: counting rank-one elements with a large set of commuting rank-one neighbours detects the isotropic vectors of the residual form (Theorem~\ref{orth:family}). Together these give the Brauer pairs of Theorem~\ref{thm:intro-witt}.
\item The character parameters of a fixed degree form a vector bundle over a matrix rank locus (Proposition~\ref{prop:parameter-varieties}).
\item In the unitary case we count the rational points of the skew-Hermitian rank loci by folding the Bruhat decomposition of $\GL_{2a}$ along the unitary Frobenius (Theorem~\ref{geo:unitary-count}). The simple reflections that are interchanged in pairs carry weight two and the fixed middle reflection carries weight one, so that $R$-polynomials of type $\mathrm C_a$ with parameters $q^2,\ldots,q^2,q$ appear; expanding them along weighted distinguished subexpressions gives the coefficients in powers of $q-1$.
\end{enumerate}

In the symplectic, split even orthogonal and even-dimensional unitary cases one can also proceed in a more classical way. A hyperbolic decomposition of the residual space gives a factorization $U=A\rtimes H$ with $A$ and $H$ abelian; the trace pairing identifies $\widehat A$ with pairs of matrices $(T,Y)$, on which $H$ acts by
$$
 (T,Y)\longmapsto(T,Y-TP),
$$
and Clifford theory produces the irreducible characters as induced characters. We describe this model in Section~\ref{sec:clifford} and check that it agrees with the orbit method. It is not available in general: in type $\mathrm B$ with $a=n=3$, and at the endpoints $a=n\ge2$ of $\Ugrp_{2n+1}(q)$, the radical is not a product of two abelian subgroups.

\subsection{Organization and notation}
Section~\ref{sec:model} introduces the matrix models and determines which of them come from maximal rational parabolic subgroups. In Section~\ref{sec:characters} we construct all irreducible characters by the orbit method, and in Section~\ref{sec:counts} we determine their degrees and multiplicities. Section~\ref{sec:clifford} gives the induced models in the hyperbolic cases. In Section~\ref{sec:orthogonal} we compare orthogonal radicals of different Witt types and prove Theorem~\ref{thm:intro-witt}. Sections~\ref{geo:split-section} and~\ref{geo:unitary-section} interpret the rank counts in terms of Schubert varieties and $R$-polynomials.

Throughout, $\BN$ is the set of nonnegative integers, and $|S|$ is the cardinality of a finite set $S$. For a vector space $R$ over a field $k$ we write $R^*=\Hom_k(R,k)$. The Lie algebra of an algebraic group $A$ is $\Lie(A)$. For a matrix $A$ we write $A^t$, $A^{-t}$ and $A^*$ for its transpose, inverse transpose and conjugate transpose.

\section{Matrix models}\label{sec:model}

Let $k=\BF_q$ and $K=\BF_{q^2}$, with $q$ odd. We first describe matrix models of the radicals in the bilinear and the unitary cases, and then determine which of them arise from maximal rational parabolic subgroups.

\subsection{The bilinear cases}
Let $\epsilon=-1$ in the symplectic case and $\epsilon=1$ in the orthogonal case. Let $E=k^a$, and let $W=k^M$ carry a nondegenerate bilinear form with Gram matrix $J$, where $J^t=\epsilon J$. On $E\oplus W\oplus E^\vee$ we use the form with Gram matrix
$$
 \mathcal H=\begin{pmatrix}0&0&I_a\\0&J&0\\\epsilon I_a&0&0\end{pmatrix},
 \qquad
 G_{\mathcal H}=\{g\in\GL_{2a+M}(k):g^t\mathcal Hg=\mathcal H\}.
$$
Let $P$ be the stabilizer of $E$ in $G_{\mathcal H}$, and let $U_{a,J}$ be the kernel of the action of $P$ on the graded space associated with the flag
$$
 0\subset E\subset E^\perp\subset E\oplus W\oplus E^\vee.
$$
Thus $U_{a,J}$ is the group of isometries that are block upper unitriangular with respect to $E\oplus W\oplus E^\vee$. Put
$$
 V=\Mat_{a\times M}(k),\qquad
 \mathcal Z=\{Z\in\Mat_a(k):Z^t=-\epsilon Z\},
$$
so that $\mathcal Z=\Sym_a(k)$ in the symplectic case and $\mathcal Z=\Alt_a(k)$ in the orthogonal case. For $X\in V$ and $Z\in\mathcal Z$ let
\begin{equation}\label{eq:L}
 L(X,Z)=\begin{pmatrix}0&X&Z\\0&0&-J^{-1}X^t\\0&0&0\end{pmatrix}.
\end{equation}

\begin{proposition}\label{prop:model}
The maps
$$
 \exp L=1+L+\tfrac12L^2,\qquad
 \log(1+N)=N-\tfrac12N^2
$$
are mutually inverse bijections between $\{L(X,Z):X\in V,\ Z\in\mathcal Z\}$ and $U_{a,J}$. Write $u_J(X,Z)=\exp L(X,Z)$. Then
\begin{equation}\label{eq:BCH}
 u_J(X,Z)\,u_J(Y,Z')=u_J\bigl(X+Y,\,Z+Z'+\tfrac12\beta_J(X,Y)\bigr),
 \qquad
 \beta_J(X,Y)=YJ^{-1}X^t-XJ^{-1}Y^t.
\end{equation}
Moreover $[L(X,Z),L(Y,Z')]=L(0,\beta_J(X,Y))$, and the commutator $uvu^{-1}v^{-1}$ of $u=u_J(X,Z)$ and $v=u_J(Y,Z')$ is $u_J(0,\beta_J(X,Y))$.
\end{proposition}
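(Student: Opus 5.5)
We verify the statement by direct block-matrix computation, in the order below. First we describe the Lie algebra of $U_{a,J}$. An element $1+N$ with $N$ strictly block upper triangular is an isometry iff $(1+N)^t\mathcal H(1+N)=\mathcal H$. Its infinitesimal version is $N^t\mathcal H+\mathcal HN=0$. Writing $N$ with blocks $N_{12}=X$, $N_{13}=Z$, $N_{23}=Y'$ and imposing $N^t\mathcal H+\mathcal HN=0$, we get two conditions. The $(2,3)$ and $(1,2)$ blocks give $JY'+\epsilon X^t=0$, i.e.\ $Y'=-\epsilon J^{-1}X^t$. Since $J^t=\epsilon J$ gives $J^{-1}=\epsilon J^{-t}$, one should check this agrees with $-J^{-1}X^t$ after the correct block bookkeeping; we will redo it carefully, since the sign convention in \eqref{eq:L} must match. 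The $(1,3)$ block gives $Z^t=-\epsilon Z$. So the set $\{L(X,Z)\}$ is exactly the Lie algebra $\mathfrak u$ of $U_{a,J}$, a linear subspace closed under brackets.

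Next, $L^3=0$ for every $L=L(X,Z)$, since $L$ is strictly block upper triangular with three blocks. Hence $\exp$ and $\log$ are given by the truncated polynomials, and they are mutually inverse between nilpotent matrices with $N^3=0$ and unipotent matrices $1+N$ with $N^3=0$. It remains to show two things.

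\begin{enumerate}
\item $\exp$ maps $\mathfrak u$ into $U_{a,J}$. For $L\in\mathfrak u$ we have $L^t\mathcal H=-\mathcal HL$. Hence $(\exp L)^t\mathcal H=\mathcal H\exp(-L)=\mathcal H(\exp L)^{-1}$, which is the isometry condition. Block unitriangularity is clear.
\item $\log$ maps $U_{a,J}$ into $\mathfrak u$. If $g=1+N\in U_{a,J}$, then $N^3=0$. From $g^t\mathcal H=\mathcal Hg^{-1}$ we get $\mathcal H^{-1}g^t\mathcal H=g^{-1}$, and taking $\log$ (a polynomial, compatible with this conjugation and transpose) gives $\mathcal H^{-1}(\log g)^t\mathcal H=-\log g$. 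So $\log g\in\mathfrak u$.
\end{enumerate}

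This establishes the bijection.

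For the commutator formula, we compute $[L(X,Z),L(Y,Z')]$ directly. Only the $(1,3)$ block survives. It equals
$$X(-J^{-1}Y^t)-Y(-J^{-1}X^t)=YJ^{-1}X^t-XJ^{-1}Y^t=\beta_J(X,Y).$$
This matrix lies in $\mathcal Z$ automatically, because the bracket preserves $\mathfrak u$.

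For the product formula, note that all brackets of length three vanish, since brackets land in the $(1,3)$ block, which is central in $\mathfrak u$. The Baker--Campbell--Hausdorff series is therefore exactly $\log(e^Ae^B)=A+B+\tfrac12[A,B]$. This is legitimate because $\tfrac12$ exists as $q$ is odd, and the identity can be checked directly from $e^Ae^B$ with $A,B,[A,B]$ satisfying the needed vanishing products. This gives \eqref{eq:BCH}.

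The group commutator of two elements follows by applying \eqref{eq:BCH} twice. Using $u_J(X,Z)^{-1}=u_J(-X,-Z)$, we get
$$uvu^{-1}v^{-1}=u_J\bigl(0,\tfrac12\beta+\tfrac12\beta\bigr)=u_J(0,\beta_J(X,Y)),$$
where the two contributions combine because $\beta_J$ is bilinear and antisymmetric.

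We expect the only real obstacle to be the sign bookkeeping in the first step: matching $-J^{-1}X^t$ against the isometry condition for both values of $\epsilon$. The rest is formal.
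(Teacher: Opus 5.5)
Your route is sound and differs from the paper's in the first part, but the one step you left open is wrong as written and contradicts your own conclusion. With $N$ having blocks $X$, $Z$, $D$ in positions $(1,2)$, $(1,3)$, $(2,3)$, one gets
\[
 N^t\mathcal H+\mathcal HN=
 \begin{pmatrix}0&0&0\\0&0&X^t+JD\\0&D^tJ+\epsilon X&Z^t+\epsilon Z\end{pmatrix}.
\]
The condition is therefore $JD+X^t=0$, with no $\epsilon$, so $D=-J^{-1}X^t$ for both signs, as in \eqref{eq:L}. The $(3,2)$ block is $\epsilon$ times the transpose of the $(2,3)$ block, because $J^t=\epsilon J$, so it adds nothing. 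All other blocks vanish identically.

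Your equation $JY'+\epsilon X^t=0$ would give $Y'=J^{-1}X^t$ in the symplectic case. With that, $\{L(X,Z)\}$ would not be $\mathfrak u$.

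With the corrected block, your remaining steps are fine. From $L^t\mathcal H=-\mathcal HL$ you get $(L^t)^2\mathcal H=\mathcal HL^2$, hence $(\exp L)^t\mathcal H=\mathcal H\exp(-L)=\mathcal H(\exp L)^{-1}$. Conversely, $\log(g^{-1})=-\log g$ and $\log(\mathcal H^{-1}g^t\mathcal H)=\mathcal H^{-1}(\log g)^t\mathcal H$, since $N^3=0$ and $2$ is invertible. For the truncated Baker--Campbell--Hausdorff identity, the direct expansion needs every associative product of three strictly block upper triangular matrices to vanish, not only triple brackets. This holds here, and it is exactly what the paper invokes.

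Compared with the paper: the paper solves the group equation $g^t\mathcal Hg=\mathcal H$ directly, including the quadratic term whose $(3,3)$ block is $D^tJD$. It obtains $D=-J^{-1}X^t$ and the corner block $C=Z-\tfrac12XJ^{-1}X^t$ with $Z^t=-\epsilon Z$, and then checks $\log g=L(X,Z)$. You solve only the linear infinitesimal condition and transport it by $\exp$ and $\log$, using that they intertwine $g\mapsto\mathcal H^{-1}(g^t)^{-1}\mathcal H$ with $N\mapsto-\mathcal H^{-1}N^t\mathcal H$.

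Your version avoids the quadratic bookkeeping and carries over directly to the unitary case with $*$ in place of $t$. The paper's version is a single explicit computation that describes the group elements outright, without needing the compatibility of the truncated $\exp$ and $\log$ with inversion and with $g\mapsto\mathcal H^{-1}g^t\mathcal H$. The bracket, product and commutator computations are the same in both.
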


\begin{proof}
Write a block upper unitriangular matrix as
$$
 g=\begin{pmatrix}1&X&C\\0&1&D\\0&0&1\end{pmatrix}.
$$
A direct computation shows that $g^t\mathcal Hg=\mathcal H$ if and only if
$$
 D=-J^{-1}X^t\quad\text{and}\quad C=Z-\tfrac12XJ^{-1}X^t\ \text{ for some } Z \text{ with } Z^t=-\epsilon Z.
$$
For such $g$ one checks that $\log g=L(X,Z)$, which proves the first assertion. Since the product of any three strictly block upper triangular matrices vanishes, the Baker--Campbell--Hausdorff formula \cite{Lazard} reduces to
\begin{equation}\label{eq:BCH-truncated}
 \exp(A)\exp(B)=\exp\bigl(A+B+\tfrac12[A,B]\bigr),
\end{equation}
and multiplying out the blocks gives $[L(X,Z),L(Y,Z')]=L(0,\beta_J(X,Y))$. This yields~\eqref{eq:BCH} and the formula for the commutator.
\end{proof}

By Proposition~\ref{prop:model}, the elements $u_J(0,Z)$ form a central subgroup containing every commutator, so
$$
 [U_{a,J},U_{a,J}]\subseteq Z(U_{a,J})
$$
and $U_{a,J}$ has nilpotency class at most two. (We say that a group has class two if its commutator subgroup is nontrivial and central.) Let us see when the class is exactly two. Assume $M>0$ and choose row vectors $u,v\in W$ with $c=uJ^{-1}v^t\ne0$. In the symplectic case, let $X$ and $Y$ have rows $u$ and $v$, respectively, in a common position $i$, and zeros elsewhere. Then
$$
 \beta_J(X,Y)=-2cE_{ii}\ne0,
$$
so the symplectic radical has class two for every $a\ge1$. In the orthogonal case $\mathcal Z=0$ when $a=1$, and the group is abelian. If $a\ge2$, let $X$ have row $u$ in position $i$ and $Y$ have row $v$ in position $j\ne i$, with zeros elsewhere. Then
$$
 \beta_J(X,Y)=c(E_{ji}-E_{ij}).
$$
As $i$ and $j$ vary these matrices span $\Alt_a(k)$; in particular the commutator subgroup is nontrivial and the orthogonal radical has class two.

\subsection{The unitary case}
Now let $W=K^M$ carry a nondegenerate Hermitian form with Gram matrix $J=J^*$. On $K^a\oplus W\oplus K^a$ we use the Hermitian form with Gram matrix $\mathcal H$ as above, with $\epsilon=1$, and put
$$
 G_{\mathcal H}=\{g\in\GL_{2a+M}(K):g^*\mathcal Hg=\mathcal H\}.
$$
We define $P$ as the stabilizer of the first summand $K^a$ and $U_{a,J}$ as in the bilinear case, and we set
$$
 V=\Mat_{a\times M}(K),\qquad
 \mathcal Z=\SHerm_a(K)=\{Z\in\Mat_a(K):Z^*=-Z\}.
$$
Over $k$ these spaces have dimensions $2aM$ and $a^2$. Replacing transposes by conjugate transposes in~\eqref{eq:L}, we obtain the corresponding nilpotent Lie algebra, and Proposition~\ref{prop:model} holds verbatim with
$$
 \beta_J(X,Y)=YJ^{-1}X^*-XJ^{-1}Y^*.
$$

If $a>0$ and $M>0$, the brackets span $\mathcal Z$ over $k$. To see this, recall that a nondegenerate Hermitian form has an anisotropic vector, so we may choose $w\in W$ with
$$
 d=wJ^{-1}w^*\in k^\times.
$$
For $i<j$ and $z\in K$, let $X$ have $w$ in row $i$ and $Y$ have $-\bar zd^{-1}w$ in row $j$, with all other rows zero. Then
$$
 \beta_J(X,Y)=zE_{ij}-\bar zE_{ji},
$$
which accounts for the off-diagonal part of $\mathcal Z$. For the diagonal part, note that $c\mapsto c-\bar c$ maps $K$ onto the one-dimensional $k$-space of skew elements of $K$. If $X$ and $Y$ have $w$ and $cw$, respectively, in row $i$ and zeros elsewhere, then
$$
 \beta_J(X,Y)=d(c-\bar c)E_{ii}.
$$
Hence the $k$-linear map $\bigwedge_k^2V\to\mathcal Z$ induced by $\beta_J$ is surjective, that is, $[V,V]=\mathcal Z$. (We do not claim that every element of $\mathcal Z$ is a single bracket.)

\subsection{Maximal rational parabolic subgroups}
We now explain why these matrix groups are exactly the radicals we are interested in. First let $\mathbf G$ be a split group of type $\mathrm B$, $\mathrm C$ or $\mathrm D$, with the standard split maximal torus $\mathbf T$ and Borel subgroup $\mathbf B\supseteq\mathbf T$, and let $\mathfrak g=\Lie(\mathbf G)$. We use the simple roots $\alpha_i=e_i-e_{i+1}$ for $i<n$, together with $\alpha_n=2e_n$ in type $\mathrm C$, $\alpha_n=e_n$ in type $\mathrm B$, and
$$
 \alpha_{n-1}=e_{n-1}-e_n,\qquad \alpha_n=e_{n-1}+e_n
$$
in type $\mathrm D$. For $a$ in the ranges listed in the table below, let $P_a\supseteq\mathbf B$ be the standard maximal parabolic subgroup obtained by removing the node $\alpha_a$, and let $\mathfrak u$ be the Lie algebra of its unipotent radical. Writing a positive root as $\gamma=\sum_ic_i(\gamma)\alpha_i$, we define its degree by $\deg_a(\gamma)=c_a(\gamma)$ and put
$$
 \mathfrak u_i=\bigoplus_{\substack{\gamma\in\Phi^+\\ \deg_a(\gamma)=i}}\mathfrak g_\gamma,
 \qquad i>0,
$$
where $\Phi^+$ is the set of positive roots and $\mathfrak g_\gamma$ is the root space of $\gamma$. This is the parabolic root grading
$$
 \mathfrak u=\bigoplus_{i>0}\mathfrak u_i,
 \qquad [\mathfrak u_i,\mathfrak u_j]\subseteq\mathfrak u_{i+j}.
$$
The roots of degree one and two are as follows.
\begin{center}
\begin{tabular}{@{}lll@{}}
\toprule
Type and range & Degree one & Degree two\\
\midrule
$\mathrm C$, $a<n$ & $e_i\pm e_j$, $i\le a<j$ & $e_i+e_j$, $i\le j\le a$\\
$\mathrm D$, $a\le n-2$ & $e_i\pm e_j$, $i\le a<j$ & $e_i+e_j$, $i<j\le a$\\
$\mathrm B$, $a\le n$ & $e_i\pm e_j$, $i\le a<j$; $e_i$, $i\le a$ & $e_i+e_j$, $i<j\le a$\\
\bottomrule
\end{tabular}
\end{center}
In the first row, $i=j$ stands for the root $2e_i$. Writing each positive root in the basis of simple roots, one checks that the table contains every positive root with nonzero $\alpha_a$-coefficient, and that this coefficient is always one or two. Therefore
$$
 \mathfrak u=\mathfrak u_1\oplus\mathfrak u_2,
 \qquad [\mathfrak u_1,\mathfrak u_1]\subseteq\mathfrak u_2,
 \qquad [\mathfrak u,\mathfrak u_2]=0.
$$

On the matrix side, give $V$ degree one and $\mathcal Z$ degree two. Since $[V,V]\subseteq\mathcal Z$ and $[V,\mathcal Z]=[\mathcal Z,\mathcal Z]=0$, this is a grading of the Lie algebra $V\oplus\mathcal Z$. Comparing the positions of the blocks in~\eqref{eq:L} with the standard root subgroups, one sees that the block $X$ in position $(1,2)$ is the sum of the root spaces of degree one and the block $Z$ in position $(1,3)$ is the sum of those of degree two. For instance, in type $\mathrm C$ the bracket of the root spaces of $e_i-e_j$ and $e_i+e_j$, where $i\le a<j$, is the root space of $2e_i$. Thus $V\simeq\mathfrak u_1$ and $\mathcal Z\simeq\mathfrak u_2$, compatibly with the gradings; for the root filtration of parabolic radicals in general, see \cite[Section~17.1]{MalleTesterman}.

For the unitary groups we choose a Witt decomposition
$$
 K^N=E\oplus W\oplus E^\vee,
 \qquad \dim_KE=a,\quad \dim_KW=M=N-2a,
$$
with $W$ nondegenerate. Let $\mathbf G=\GL_N(\overline{k})$, and let
$$
 F(g)=J_N^{-1}\bigl(g^{(q)t}\bigr)^{-1}J_N,
$$
where $J_N$ is the Gram matrix of the Hermitian form and $g^{(q)}$ is obtained from $g$ by raising all entries to the $q$-th power; then $\mathbf G^F\simeq\Ugrp_N(q)$. The parabolic subgroup $\mathbf P_a$ stabilizing the flag $E\subset E^\perp$ is $F$-stable, with block sizes $(a,M,a)$. On the Lie algebra of its unipotent radical we use the $q$-semilinear map
$$
 F_{\mathrm{Lie}}(L)=-J_N^{-1}L^{(q)t}J_N,
 \qquad\text{so that}\qquad F(\exp L)=\exp(F_{\mathrm{Lie}}(L)).
$$
This map describes the $\BF_q$-structure of the Lie algebra; it should not be confused with the differential of $F$, which is zero. It interchanges the two blocks adjacent to the diagonal and preserves the corner block. Its fixed points are exactly the matrices~\eqref{eq:L}, with conjugate transposes in place of transposes, where
$$
 X\in V=\Mat_{a\times M}(K),\qquad
 Z\in\mathcal Z=\SHerm_a(K);
$$
indeed, the fixed-point equations force the second adjacent block to be $-J^{-1}X^*$ and give $Z^*=-Z$. Thus the fixed points of $F_{\mathrm{Lie}}$ form the $k$-Lie algebra of the previous subsection. Putting the adjacent blocks in degree one and the corner block in degree two gives the unitary analogue of the root grading.

On the absolute root system of type $\mathrm A_{N-1}$, the Frobenius $F$ interchanges $\alpha_i$ and $\alpha_{N-i}$ (Figure~\ref{fig:dynkin}). If $N=2n+1$ and $1\leq a\leq n$, removing the orbit $\{\alpha_a,\alpha_{2n+1-a}\}$ gives block sizes $(a,2n+1-2a,a)$. If $N=2n$ and $a<n$, removing $\{\alpha_a,\alpha_{2n-a}\}$ gives block sizes $(a,2n-2a,a)$. These are maximal proper rational parabolic subgroups. At the endpoint of type ${}^2\mathrm A_{2n}$, where $a=n$, the two removed roots are adjacent and $M=1$, so the radical is still a nonabelian three-block group. At the endpoint of type ${}^2\mathrm A_{2n-1}$, again with $a=n$, the removed node is fixed by $F$, the middle block is empty, and the radical is abelian.

Let $\mathbf P$ be any of the stabilizers above. In the bilinear cases, the action of $\mathbf P$ on the graded pieces of
$$
 0\subset E\subset E^\perp\subset E\oplus W\oplus E^\vee
$$
gives the Levi quotient $\GL(E)\times\operatorname{Isom}(W)$ for the full isometry group; for a symplectic or special orthogonal group the second factor is $\Sp(W)$ or $\SO(W)$, respectively. The kernel of this action is the group of block upper unitriangular isometries, and it is $R_u(\mathbf P)$. In the unitary case, $\mathbf P\subset\GL_N(\overline{k})$ has algebraic Levi quotient $\GL_a\times\GL_M\times\GL_a$; the Frobenius $F$ interchanges the two outer factors and acts on the middle one as a unitary Frobenius, so $\mathbf P^F$ has Levi quotient $\GL_a(K)\times\Ugrp_M(q)$. Its unipotent kernel is again the group of block upper unitriangular matrices satisfying the $F$-fixed isometry equations. In all three-block cases we therefore obtain
$$
 R_u(\mathbf P)^F=U_{a,J}=R_u^{\mathsf X,a}.
$$
The same description applies when the residual orthogonal form is non-split. The admissible values of $a$ are those for which the removed node or $F$-orbit defines a maximal rational parabolic subgroup; see \cite[Section~26.1]{MalleTesterman}.

\subsection{Ranges and endpoints}
We now explain the entries of Table~\ref{tab:range} that are not covered by the discussion above.

For $\SO^-_{2n}(q)$ the residual quadratic form is non-split, and $\rho$ interchanges the two terminal nodes $\alpha_{n-1}$ and $\alpha_n$. Removing an orbit $\{\alpha_a\}$ with $a\le n-2$ gives the stabilizer of a totally isotropic $a$-space, whose residual space has dimension $2(n-a)\ge4$ and is again of minus type. Removing the endpoint orbit $\{\alpha_{n-1},\alpha_n\}$ gives a maximal rational parabolic subgroup with $a=n-1$, whose residual plane is anisotropic over $k$; over $\overline{k}$ this parabolic subgroup is not maximal. For this orbit we define the degree of a root to be the sum of its coefficients at $\alpha_{n-1}$ and $\alpha_n$. The roots $e_i-e_n$ and $e_i+e_n$ with $i<n$ have degree one, and the roots $e_i+e_j$ with $i<j<n$ have degree two. The Frobenius preserves both degree spaces, and their fixed points are the $X$- and $Z$-blocks, respectively. So the grading is available at this endpoint as well.

In $\SO^+_{2n}(q)$, on the other hand, $\rho$ is trivial. Removing $\alpha_{n-1}$ or $\alpha_n$ gives the stabilizers of the maximal totally isotropic subspaces in the two families, with $a=n$ and $M=0$. The stabilizer of a totally isotropic $(n-1)$-space $E$ is not maximal. Indeed, if $g\in\SO^+_{2n}(q)$ stabilizes $E$, then $g$ acts on $E^\perp/E$ with determinant $\det g=1$, because the actions on $E$ and on $(E\oplus W\oplus E^\vee)/E^\perp\simeq E^*$ have inverse determinants. So $g$ fixes both singular lines of the hyperbolic plane $E^\perp/E$, and hence stabilizes both maximal totally isotropic subspaces containing $E$. (In $\mathrm O^+_{2n}(q)$ a reflection can interchange these two lines; we always work in the special orthogonal group.) Proposition~\ref{prop:model} applies to the radical of this stabilizer as well, and it will appear in Section~\ref{sec:orthogonal}.

At the endpoints of types $\mathrm C_n$, $\mathrm D_n$ and ${}^2\mathrm A_{2n-1}$ the radicals $\Sym_n(k)$, $\Alt_n(k)$ and $\SHerm_n(K)$ are abelian, of orders $q^{n(n+1)/2}$, $q^{n(n-1)/2}$ and $q^{n^2}$, and all their irreducible characters are linear. By contrast, the endpoint of type ${}^2\mathrm A_{2n}$ has $M=1$ and gives a nonabelian three-block radical of order $q^{n^2+2n}$.

The orders of the radicals in Table~\ref{tab:range} are
$$
 q^{aM+a(a+1)/2},\qquad q^{aM+a(a-1)/2},\qquad q^{2aM+a^2}
$$
in the symplectic, orthogonal and unitary cases, respectively. When the group is abelian, $\mathcal Z$ need not be its whole center; for instance, if $a=1$ in the orthogonal case, then $\mathcal Z=0$ and $U_{a,J}=V$.

Since a unipotent isometry has determinant one, an orthogonal stabilizer and the corresponding special orthogonal stabilizer have the same unipotent radical; for example, nothing changes if $\SO^-_{2n}(q)$ is replaced by $\mathrm O^-_{2n}(q)$. Similarly, the corresponding stabilizers in $\Ugrp_N(q)$ and $\mathrm{SU}_N(q)$ have the same unipotent radical. For the full orthogonal group, the unipotent radical of a stabilizer means that of the corresponding parabolic subgroup of the identity component, that is, the matrix group described above.

\section{Characters of the radicals}\label{sec:characters}

In this section we describe the irreducible characters of all the radicals in Table~\ref{tab:range} at once, using the orbit method for groups of class at most two. We fix a nontrivial additive character $\psi$ of $k$.

\subsection{The orbit method in class at most two}
We recall the orbit method for groups of class at most two; see \cite[Section~3.2]{OV}. For a $k$-vector space $R$ we identify $R^*$ with the group of additive characters of $R$ via $\eta\mapsto\psi\circ\eta$.

\begin{theorem}[{\cite[Theorem~3.1]{OV}}]\label{thm:characters}
Let $\mathfrak u=V\oplus\mathcal Z$ be a finite-dimensional Lie algebra over $k$ such that
$$
 [V,V]\subseteq\mathcal Z,\qquad [\mathfrak u,\mathcal Z]=0,
$$
and let $U$ be the group with underlying set $\mathfrak u$ and multiplication
$$
 u*v=u+v+\tfrac12[u,v].
$$
For $\lambda\in\mathcal Z^*$ put
$$
 B_\lambda(v,w)=\lambda([v,w]),\qquad R_\lambda=\rad B_\lambda,
 \qquad 2e_\lambda=\rank_kB_\lambda.
$$
Then there is a bijection
$$
 \Irr(U)\longleftrightarrow
 \coprod_{\lambda\in\mathcal Z^*}R_\lambda^*.
$$
The character $\chi_{\lambda,\eta}$ corresponding to $(\lambda,\eta)$ has degree $q^{e_\lambda}$ and is given by
\begin{equation}\label{eq:character-values}
 \chi_{\lambda,\eta}(v+z)=
 \begin{cases}
 q^{e_\lambda}\psi(\lambda(z)+\eta(v)),&v\in R_\lambda,\\
 0,&v\notin R_\lambda,
 \end{cases}
 \qquad v\in V,\ z\in\mathcal Z.
\end{equation}
In particular, there are exactly $q^{\dim V-2e_\lambda}$ irreducible characters with central parameter $\lambda$.
\end{theorem}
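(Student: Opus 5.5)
The plan is a direct Clifford-theory computation for a group of class at most two. In this model, $z\in\mathcal Z$ is central, and $[u,v]\in\mathcal Z$, so the group commutator of $u$ and $v$ is $[u,v]$. The first step is to sort irreducible characters by central character. If $\chi$ is irreducible, Schur's lemma gives $\chi|_{\mathcal Z}=\chi(1)\,\psi\circ\lambda$ for a unique $\lambda\in\mathcal Z^*$; here we use that $\psi$ is nontrivial, so every character of $\mathcal Z$ has the form $\psi\circ\lambda$. It therefore suffices, for fixed $\lambda$, to describe the irreducible characters lying over $\psi\circ\lambda$.

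Second, the characters lying over $\psi\circ\lambda$ all vanish off the set $R_\lambda+\mathcal Z$. Take $v\notin R_\lambda$ and choose $w$ with $\lambda([w,v])\neq0$. Conjugating $v+z$ by $w$ gives $v+z+[w,v]$. Hence
$$\chi(v+z)=\psi(\lambda([w,v]))\,\chi(v+z).$$
Replacing $w$ by $tw$ makes $\psi(t\lambda([w,v]))$ range over all values of $\psi$ on $k$, which include values $\neq1$, so $\chi(v+z)=0$.

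Third, we build the characters. The subgroup $A=R_\lambda\oplus\mathcal Z$ is normal, since it contains all commutators. On $A$ the function $\psi(\lambda(z)+\eta(v))$ is a linear character, because $\lambda([v,v'])=0$ for $v,v'\in R_\lambda$. Choose a maximal isotropic subspace $L\supseteq R_\lambda$ of $B_\lambda$; it has $\dim L=\dim V-e_\lambda$. The function $\psi(\lambda(z)+\tilde\eta(v))$, with $\tilde\eta$ a linear extension of $\eta$ to $L$, is a linear character of the abelian subgroup $L\oplus\mathcal Z$, again because $\lambda([L,L])=0$. Induce it to $U$; the result has degree $q^{e_\lambda}$. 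The induced character vanishes off $R_\lambda+\mathcal Z$, because that set is the intersection of the conjugates of $L+\mathcal Z$ on which the character is conjugation invariant. On $R_\lambda+\mathcal Z$, which is central modulo $\ker\lambda$, it equals $q^{e_\lambda}\psi(\lambda(z)+\eta(v))$. This is formula \eqref{eq:character-values}. A norm computation then gives
$$\langle\chi,\chi\rangle=\frac{q^{2e_\lambda}\,|R_\lambda|\,|\mathcal Z|}{|U|}=1,$$
using $|U|=q^{\dim V}|\mathcal Z|$. So $\chi$ is irreducible.

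Finally, we check that these characters are all of them. Distinct pairs $(\lambda,\eta)$ give distinct restrictions to $A$, hence distinct characters. For fixed $\lambda$, the sum of squared degrees is
$$q^{\dim V-2e_\lambda}\cdot q^{2e_\lambda}=q^{\dim V}.$$
Summing over the $|\mathcal Z|$ choices of $\lambda$ gives $|U|$, so the list is complete. The main obstacle I expect is the value computation of the induced character. In particular one must confirm independence of the choice of $L$ and $\tilde\eta$, which follows from the value formula itself, and confirm that the factor $\tfrac12$ in the multiplication causes no discrepancy on $L\oplus\mathcal Z$. It causes none, since $B_\lambda$ vanishes there.
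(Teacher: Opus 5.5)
Your proposal is correct, and it takes a different route from the paper. The paper does not compute anything group-theoretically. It invokes Kirillov's orbit method \cite[Theorem~3.1]{OV}, which applies because the class is less than $p$, and only makes the coadjoint orbits explicit:
\begin{itemize}
\item the orbit of $(\xi,\lambda)$ is $(\xi+R_\lambda^\perp)\times\{\lambda\}$, so it is labelled by $(\lambda,\xi|_{R_\lambda})$;
\item the character values come from Kirillov's formula (namely $q^{-e_\lambda}$ times the sum of $\psi$ over the orbit) together with orthogonality of additive characters on $R_\lambda^\perp$.
\end{itemize}

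You instead give a self-contained Clifford-theoretic argument: central characters, the vanishing argument via $v+z\mapsto v+z+[w,v]$, induction from the polarization subgroup $L\oplus\mathcal Z$, a norm computation for irreducibility, and a sum-of-squares count for completeness.

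The paper's version is shorter given the citation and fits the Lazard/orbit-method viewpoint used throughout. Yours needs nothing beyond $2\in k^\times$ and elementary character theory. It also exhibits every $\chi_{\lambda,\eta}$ as induced from a linear character of a subgroup $L\oplus\mathcal Z$ chosen depending on $\lambda$. This works uniformly in all types, whereas Section~\ref{sec:clifford} obtains induced models only in the hyperbolic cases, from a fixed decomposition $A\rtimes H$, and even there appeals to \cite[Theorem~3.1(2)]{OV} to match them with the orbit-method characters.

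Two small points need fixing.

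First, $L\oplus\mathcal Z$ is a subgroup but is not abelian in general: for $\lambda=0$ it is all of $U$. What you need, and what you actually use, is only $\lambda([L,L])=0$. This makes $\psi(\lambda(z)+\tilde\eta(v))$ multiplicative, which is all a linear character requires.

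Second, $L\oplus\mathcal Z$ contains the derived subgroup, so it is normal and equals all its conjugates. The phrase about intersecting conjugates therefore does not explain why the induced character vanishes off $R_\lambda+\mathcal Z$. Either of the following gives the correct justification.
\begin{itemize}
\item Rerun your second step. The induced character is a class function with $\chi(gz)=\psi(\lambda(z))\chi(g)$ for $z\in\mathcal Z$, because $\mathcal Z$ is central and lies in the inducing subgroup.
\item Compute directly. For $g=v+z$ with $v\in L$, the induction formula gives $\chi(g)=\frac{|\mathcal Z|}{|L\oplus\mathcal Z|}\,\phi(g)\sum_{w\in V}\psi(B_\lambda(w,v))$. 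The inner sum equals $|V|$ if $v\in R_\lambda$ and $0$ otherwise.
\end{itemize}
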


\begin{proof}
Since $p$ is odd, $U$ has class less than $p$, so \cite[Theorem~3.1]{OV} applies; we only have to make the coadjoint orbits explicit. For $w\in V$, the coadjoint action of $w$ sends $(\xi,\lambda)\in V^*\oplus\mathcal Z^*$ to $(\xi-B_\lambda(w,-),\lambda)$. As $w$ runs over $V$, the functional $B_\lambda(w,-)$ runs over
$$
 R_\lambda^\perp=\{\zeta\in V^*: \zeta|_{R_\lambda}=0\},
$$
so the orbit of $(\xi,\lambda)$ is $(\xi+R_\lambda^\perp)\times\{\lambda\}$. It has $q^{2e_\lambda}$ elements and is determined by $\lambda$ together with $\eta=\xi|_{R_\lambda}$; every $\eta\in R_\lambda^*$ occurs. By Kirillov's formula, the character attached to this orbit is $q^{-e_\lambda}$ times the sum of $\psi$ over the orbit. Orthogonality of additive characters on $R_\lambda^\perp$ then gives~\eqref{eq:character-values}. Finally, the number of labels with first component $\lambda$ is $|R_\lambda^*|=q^{\dim V-2e_\lambda}$.
\end{proof}

\subsection{The commutator form}
We now apply Theorem~\ref{thm:characters} to the groups of Section~\ref{sec:model}. In the bilinear cases we identify $\mathcal Z^*$ with the space of matrices $T$ satisfying $T^t=-\epsilon T$ via
\begin{equation}\label{eq:lambda-bilinear}
 \lambda_T(Z)=\tfrac12\tr(TZ).
\end{equation}
In the unitary case we use
$$
 \lambda_T(Z)=\tr(TZ)\in k,\qquad T,Z\in\SHerm_a(K).
$$
The first pairing is nondegenerate because $q$ is odd. For the second, choose $\delta\in K^\times$ with $\bar\delta=-\delta$. Multiplication by $\delta$ interchanges Hermitian and skew-Hermitian matrices, and the trace pairing on Hermitian matrices is nondegenerate; hence so is the second pairing.

\begin{proposition}\label{prop:rank}
In the bilinear cases, with respect to row-by-row vectorization,
$$
 [B_T]=\epsilon T\otimes J^{-1},\qquad
 R_T=\{X\in V:TX=0\},\qquad \rank_k B_T=M\rank_kT.
$$
In the unitary case,
$$
 B_T(X,Y)=\Tr_{K/k}\tr(TYJ^{-1}X^*),\quad
 R_T=\{X\in V:TX=0\},\quad \rank_k B_T=2M\rank_KT.
$$
\end{proposition}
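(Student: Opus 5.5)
Direct computation from the commutator formula of Proposition~\ref{prop:model}, then identifying the radical and rank by linear algebra.

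\emph{Bilinear case.} By Proposition~\ref{prop:model}, $[X,Y]=\beta_J(X,Y)=YJ^{-1}X^t-XJ^{-1}Y^t$. Then
$$
B_T(X,Y)=\tfrac12\tr\bigl(T(YJ^{-1}X^t-XJ^{-1}Y^t)\bigr).
$$
Transpose the second term, using $T^t=-\epsilon T$ and $(J^{-1})^t=\epsilon J^{-1}$: $\tr(TXJ^{-1}Y^t)=\tr(YJ^{-t}X^tT^t)=-\tr(TYJ^{-1}X^t)$ (the two signs $\epsilon$ cancel, leaving the minus from $-\epsilon$). Hence
$$
B_T(X,Y)=\tr(TYJ^{-1}X^t)=\sum_{i,j}x_i\,T_{ji}\,J^{-1}y_j^t,
$$
where $x_i,y_j$ are the rows of $X,Y$. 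In row-by-row vectorization this gives the Gram matrix $T^t\otimes J^{-1}=-\epsilon T\otimes J^{-1}$. I would fix the sign against the stated $\epsilon T\otimes J^{-1}$ by a careful check of which argument is the row vector; the sign does not affect $R_T$ or the rank. The radical consists of those $X$ with $\tr(TYJ^{-1}X^t)=0$ for all $Y$, i.e.\ $J^{-1}X^tT=0$. Since $J$ is invertible this means $X^tT=0$, equivalently $T^tX=0$, equivalently $TX=0$ because $T^t=\pm T$. The rank of a Kronecker product is the product of the ranks, so $\rank B_T=M\rank T$.

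\emph{Unitary case.} Here $\beta_J(X,Y)=YJ^{-1}X^*-XJ^{-1}Y^*$ and $\lambda_T(Z)=\tr(TZ)$. Since $\lambda_T$ is $k$-valued on $\SHerm_a(K)$, it equals $\tfrac12\Tr_{K/k}\tr(TZ)$. Taking conjugate transposes inside the trace and using $T^*=-T$, $J^*=J$, one gets $\overline{\tr(TXJ^{-1}Y^*)}=-\tr(TYJ^{-1}X^*)$. So $\tr(T\beta_J(X,Y))=\tr(TYJ^{-1}X^*)+\overline{\tr(TYJ^{-1}X^*)}=\Tr_{K/k}\tr(TYJ^{-1}X^*)$, which is the stated formula.

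For the radical, nondegeneracy of the trace form $\Tr_{K/k}$ shows that $X\in R_T$ iff the $K$-linear functional $Y\mapsto\tr(TYJ^{-1}X^*)=\tr(Y\,J^{-1}X^*T)$ vanishes identically. This happens iff $J^{-1}X^*T=0$, i.e.\ $T^*X=0$, i.e.\ $TX=0$. Thus $R_T$ is a $K$-subspace of $\Mat_{a\times M}(K)$ of $K$-dimension $M(a-\rank_KT)$, and
$$
\rank_kB_T=2aM-2M(a-\rank_KT)=2M\rank_KT.
$$

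In the bilinear case, too, I would prefer to compute the rank as $\dim V-\dim R_T=aM-M(a-\rank T)$, which avoids any Kronecker sign issues. The only delicate step is the sign bookkeeping in the transposition identities (and matching the paper's convention for $[B_T]$); everything else is linear algebra.
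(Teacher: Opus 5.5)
Your route is the paper's: reduce $B_T$ to $s=\tr(TYJ^{-1}X^t)$ (to $s+\bar s$ with $s=\tr(TYJ^{-1}X^*)$ in the unitary case), read off the radical from a nondegenerate trace pairing as $J^{-1}X^tT=0$ (resp.\ $J^{-1}X^*T=0$), i.e.\ $TX=0$, and get the rank as $\dim V-\dim R_T$. All of this is correct, including the whole unitary part. What is not proved is the first bilinear assertion $[B_T]=\epsilon T\otimes J^{-1}$. Your row expansion is wrong, which is why you land on the opposite sign and have to leave it open. The sum you wrote is $\sum_{i,j}x_iT_{ji}J^{-1}y_j^t=\sum_{i,j}T_{ji}(XJ^{-1}Y^t)_{ij}=\tr(TXJ^{-1}Y^t)$. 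By your own transposition identity this equals $-B_T(X,Y)$, not $B_T(X,Y)$.

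The correct expansion is $\tr(TYJ^{-1}X^t)=\sum_{i,j}T_{ij}(YJ^{-1}X^t)_{ji}=\sum_{i,j}T_{ij}\,y_jJ^{-1}x_i^t$. Each scalar $y_jJ^{-1}x_i^t$ equals its transpose $x_i(J^{-1})^ty_j^t=\epsilon\,x_iJ^{-1}y_j^t$. Hence $B_T(X,Y)=\epsilon\sum_{i,j}T_{ij}\,x_iJ^{-1}y_j^t$. With $X$ as the left-hand argument, the $(i,j)$ block of the Gram matrix is therefore $\epsilon T_{ij}J^{-1}$, which is the stated $\epsilon T\otimes J^{-1}$. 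As a sanity check, take $a=1$, $M=2$, $\epsilon=-1$, $J=\left(\begin{smallmatrix}0&1\\-1&0\end{smallmatrix}\right)$ and $T=(t)$. Then $B_T(x,y)=t(x_1y_2-x_2y_1)$, whose Gram matrix is $\epsilon T\otimes J^{-1}$ and not $T^t\otimes J^{-1}$. Your matrix $T^t\otimes J^{-1}$ would only be right if $Y$ were the left-hand argument, which is not how the statement is meant. As you say, the sign does not affect $R_T$ or the rank, so only this one claim needs repair.
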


\begin{proof}
Consider first the bilinear cases, and put $s=\tr(TYJ^{-1}X^t)$. Using $T^t=-\epsilon T$ and $(J^{-1})^t=\epsilon J^{-1}$, we find
$$
 \tr(TXJ^{-1}Y^t)=-s,
$$
and hence $B_T(X,Y)=\frac12(s+s)=s$. If $x_i$ and $y_i$ denote the rows of $X$ and $Y$, then
$$
 B_T(X,Y)=\sum_{i,j}T_{ij}\,y_jJ^{-1}x_i^t
 =\epsilon\sum_{i,j}T_{ij}\,x_iJ^{-1}y_j^t,
$$
which shows that $[B_T]=\epsilon T\otimes J^{-1}$. It also shows that $X\in R_T$ if and only if $T^tXJ^{-1}=0$, that is, $TX=0$. If $r=\rank_kT$, then $X\in R_T$ exactly when every column of $X$ lies in $\ker T$, so
$$
 \dim_kR_T=M(a-r),\qquad \rank_kB_T=aM-\dim_kR_T=Mr.
$$

In the unitary case put $s=\tr(TYJ^{-1}X^*)$. Then
$$
 \bar s=-\tr(TXJ^{-1}Y^*),
$$
so $B_T(X,Y)=s+\bar s=\Tr_{K/k}(s)$. By cyclicity of the trace,
$$
 B_T(X,Y)=\Tr_{K/k}\tr\bigl((J^{-1}X^*T)Y\bigr).
$$
The $k$-bilinear pairing
$$
 \Mat_{M\times a}(K)\times\Mat_{a\times M}(K)\longrightarrow k,
 \qquad (C,Y)\longmapsto\Tr_{K/k}\tr(CY)
$$
is nondegenerate. Taking $C=J^{-1}X^*T$, we see that $B_T(X,Y)=0$ for all $Y$ if and only if $J^{-1}X^*T=0$. Since $J$ is invertible and $T^*=-T$, this is equivalent to $TX=0$. If $r=\rank_KT$, then $R_T$ has $K$-dimension $M(a-r)$, hence $k$-codimension $2Mr$ in $V$.
\end{proof}

Theorem~\ref{thm:characters} and Proposition~\ref{prop:rank} together describe all irreducible characters of the groups in Table~\ref{tab:range}; this proves Theorem~\ref{thm:main}.

\subsection{Conjugacy classes}\label{sec:conjugacy-classes}
The same coordinates give a simple description of the conjugacy classes, which we shall use to count classes and to compare radicals of different Witt types.

For $v\in V$ let $[V,v]$ be the image of the map $w\mapsto[w,v]$. Conjugation by $w+c$ sends $v+z$ to $v+z+[w,v]$, so the conjugacy class of $v+z$ is
$$
 \{v+z+c:c\in[V,v]\},
 \qquad |(v+z)^U|=q^{\dim_k[V,v]}.
$$
Thus the conjugacy classes are parametrized by pairs consisting of $v\in V$ and a coset in $\mathcal Z/[V,v]$, and the values~\eqref{eq:character-values} are visibly constant on them. In particular, the number $k(U)$ of conjugacy classes of $U$ is
\begin{equation}\label{eq:classcount}
 k(U)=\sum_{v\in V}q^{\dim\mathcal Z-\dim[V,v]}.
\end{equation}

\subsection{\texorpdfstring{Type $\mathrm B$}{Type B}}
In type $\mathrm B$ we have $M=2(n-a)+1$, and the results above apply for every admissible $a$. The induced construction of Section~\ref{sec:clifford}, on the other hand, requires a hyperbolic residual space, and in type $\mathrm B$ it is not available in general.

For example, let $a=n=3$. Then $V=k^3$, $\mathcal Z=\Alt_3(k)\simeq\bigwedge^2k^3$, and, up to a nonzero scalar, the bracket is the exterior product
$$
 k^3\times k^3\longrightarrow\bigwedge^2k^3,\qquad (x,y)\longmapsto x\wedge y.
$$
Since $x\wedge y=0$ only when $x$ and $y$ are linearly dependent, every subspace of $V$ on which the bracket vanishes has dimension at most one. The image in $V$ of an abelian subgroup of $U$ spans such a subspace, so it lies in a line. Consequently $U$ is not a product of two abelian subgroups: the images of two such subgroups span at most a plane in the three-dimensional space $V$.

For $a=2$ the situation is different. Here $\mathcal Z=\Alt_2(k)$ is one-dimensional, and after identifying it with $k$ the bracket becomes a nondegenerate alternating form on $V$. If $V=L\oplus L'$ is a decomposition into complementary Lagrangian subspaces, then $U=\exp(L\oplus\mathcal Z)\rtimes\exp(L')$ is a semidirect product of two abelian groups. So such decompositions do exist in some cases of type $\mathrm B$, but only the class-two argument works uniformly.

When $M=1$, that is, $a=n$, we have $V\simeq k^a$ and $\mathcal Z\simeq\bigwedge^2V$, with the bracket given up to a nonzero scalar by $x\wedge y$. In other words, $\mathfrak u$ is the free nilpotent Lie algebra of class two on $a$ generators, whose character degrees were computed in \cite[Proposition~2.4]{OV}.

\subsection{Parameter spaces}\label{sec:parameter-varieties}
We now organize the parameters of Theorem~\ref{thm:characters} geometrically. This reduces the number of characters of a given degree to a point count on an exact-rank locus, multiplied by the number of points in the fiber of a vector bundle.

Let $d=\dim_kV$, and regard $\mathcal Z^*$ as an affine space over $k$. The bracket $\beta:\bigwedge^2V\to\mathcal Z$ has a dual map
$$
 \beta^*:\mathcal Z^*\longrightarrow\bigwedge^2V^*,
 \qquad \beta^*(\lambda)(v,w)=\lambda([v,w])=B_\lambda(v,w).
$$
For an integer $e$ with $0\leq2e\leq d$, let $\mathcal Y_{\leq e}$ be the reduced closed subvariety of $\mathcal Z^*$ whose geometric points satisfy
$$
 \rank B_\lambda\leq2e.
$$
It is the reduction of the subscheme defined by the pullbacks under $\beta^*$ of the Pfaffians of size $2e+2$. With $\mathcal Y_{\leq-1}=\varnothing$ we put
$$
 \mathcal Y_e=\mathcal Y_{\leq e}\setminus\mathcal Y_{\leq e-1}.
$$
Thus $\mathcal Y_e$ is the reduced locally closed locus of central parameters that give characters of degree $q^e$. Let $\mathcal R_e$ be the kernel of the morphism of trivial bundles
$$
 \mathcal Y_e\times V\longrightarrow\mathcal Y_e\times V^*,
 \qquad (\lambda,v)\longmapsto(\lambda,B_\lambda(v,-)).
$$
Its fiber at $\lambda$ is $R_\lambda=\rad B_\lambda$, which has dimension $d-2e$.

\begin{proposition}\label{prop:parameter-varieties}
The kernel $\mathcal R_e$ is a vector bundle of rank $d-2e$ over $\mathcal Y_e$, with fiber $R_\lambda$ at $\lambda\in\mathcal Y_e$. Let $\mathcal M_e=\operatorname{Tot}(\mathcal R_e^\vee)$ be the total space of the dual bundle. Then $\psi$ induces a bijection
$$
 \mathcal M_e(k)\longleftrightarrow
 \{\chi\in\Irr(U):\chi(1)=q^e\}.
$$
In particular,
\begin{equation}\label{eq:parameter-variety}
 N_{U,e}(q)=|\mathcal M_e(k)|
          =q^{d-2e}|\mathcal Y_e(k)|.
\end{equation}
\end{proposition}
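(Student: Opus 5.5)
The plan has three parts: show the kernel has locally constant rank and hence is a subbundle, identify rational points of the dual total space with characters via Theorem~\ref{thm:characters}, and count.

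\emph{Vector bundle.} Over $\mathcal Y_e$ the map of trivial bundles $\phi:\mathcal Y_e\times V\to\mathcal Y_e\times V^*$, $(\lambda,v)\mapsto(\lambda,B_\lambda(v,-))$, is given by a matrix of regular functions, namely the alternating matrix $\beta^*(\lambda)$. At every geometric point of $\mathcal Y_e$ this matrix has rank exactly $2e$. A morphism of locally free sheaves on a reduced scheme whose rank is constant at all points has locally free image and cokernel. Concretely, near a point, choose a nonvanishing $2e\times2e$ minor. This minor is in fact a principal Pfaffian-type minor of the alternating matrix, but any nonvanishing minor suffices. On the open set where it is nonzero, Cramer's rule expresses the remaining columns' kernel conditions. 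Every $(2e+1)$-minor vanishes as a function, since it vanishes at all points and $\mathcal Y_e$ is reduced. Hence the kernel is cut out by $2e$ independent linear equations with a regular solution basis, and is free of rank $d-2e$ there. So $\mathcal R_e$ is a vector bundle with fiber $R_\lambda$. Reducedness is precisely what makes the $(2e+1)$-minors vanish as functions rather than just pointwise; this is the only subtle point.

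\emph{Bijection on rational points.} The total space of the dual bundle has $k$-points equal to pairs $(\lambda,\eta)$ with $\lambda\in\mathcal Y_e(k)$ and $\eta$ in the $k$-dual of the fiber at $\lambda$. Here I will need that taking the fiber commutes with rational points. The kernel fiber of $\mathcal R_e$ at $\lambda$ over $k$ is $\rad B_\lambda$ computed over $k$, because the rank of a matrix over $k$ does not change under field extension. Also $\mathcal Y_e(k)$ is exactly the set of $\lambda\in\mathcal Z^*$ with $\rank_kB_\lambda=2e$, by the same observation. Then Theorem~\ref{thm:characters}, with $\eta\mapsto\psi\circ\eta$, gives a bijection from these pairs to the characters $\chi_{\lambda,\eta}$ of degree $q^{e_\lambda}=q^e$. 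Every character of degree $q^e$ arises in this way, since its central parameter has $e_\lambda=e$.

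\emph{Counting.} Each fiber has $q^{d-2e}$ rational points, so summing over $\mathcal Y_e(k)$ gives $N_{U,e}(q)=|\mathcal M_e(k)|=q^{d-2e}|\mathcal Y_e(k)|$.
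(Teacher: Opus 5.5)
Your proposal is correct and follows the paper's proof essentially step for step. The paper also works on the open sets where a fixed $2e\times2e$ minor is invertible and uses reducedness of $\mathcal Y_e$ to show that the complementary block of the bundle map vanishes. Your vanishing $(2e+1)$-minors are exactly the entries of that Schur-complement block, up to the unit minor. The paper then identifies the rational points of $\mathcal M_e$ with pairs $(\lambda,\eta)$ and sums the fiber sizes, as you do. Your remark that $\mathcal Y_e(k)$ and the fibers $R_\lambda$ may be computed over $k$, since rank is invariant under field extension, makes explicit a point the paper leaves implicit.
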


\begin{proof}
On the open subset of $\mathcal Y_e$ where a fixed $2e\times2e$ minor of the matrix of $B_\lambda$ is invertible, a change of local frames brings the bundle map into the form $\operatorname{diag}(I_{2e},A)$. Since every geometric fiber has rank exactly $2e$, the entries of $A$ vanish at every geometric point, and since $\mathcal Y_e$ is reduced, $A=0$. Hence the bundle map has the local form $\operatorname{diag}(I_{2e},0)$ and its kernel is locally free of rank $d-2e$. As the minor varies, these open subsets cover $\mathcal Y_e$.

The fiber of $\mathcal R_e^\vee$ at $\lambda$ is $R_\lambda^*$, so a point of $\mathcal M_e(k)$ is a pair $(\lambda,\eta)$ with $\lambda\in\mathcal Y_e(k)$ and $\eta\in R_\lambda^*$. Since $\rank B_\lambda=2e$, Theorem~\ref{thm:characters} gives the bijection and the degree. Finally,
\begin{equation*}
 |\mathcal M_e(k)|
 =\sum_{\lambda\in\mathcal Y_e(k)}|R_\lambda^*|
 =q^{d-2e}|\mathcal Y_e(k)|.\qedhere
\end{equation*}
\end{proof}

If $M=0$, then $V=0$, $\mathcal Y_0=\mathcal Z^*$ and $\mathcal R_0=0$, so $N_{U,0}(q)=|\mathcal Z^*(k)|=|U|$. For $M>0$ the degree of a character is determined by the rank of its central parameter.

By Proposition~\ref{prop:parameter-varieties}, it remains to count the rational points of $\mathcal Y_e$. The varieties $\mathcal Y_e$ and $\mathcal M_e$ and the bundle $\mathcal R_e$ are defined over $k$, and since $k$ is perfect their formation commutes with finite extensions of $k$. Suppose that $\mathcal Y_e$ has a finite partition into locally closed $k$-subvarieties
$$
 \mathcal Y_e=\bigsqcup_{\gamma\in\Gamma}C_\gamma,
 \qquad
 C_\gamma\simeq_k\mathbb A_k^{b_\gamma}
                    \times(\mathbb G_{m,k})^{c_\gamma}.
$$
Then for every finite extension $k'/k$, with $q'=|k'|$,
$$
 |\mathcal Y_e(k')|
 =\sum_{\gamma\in\Gamma}q'^{\,b_\gamma}(q'-1)^{c_\gamma},
 \qquad
 |\mathcal M_e(k')|
 =\sum_{\gamma\in\Gamma}q'^{\,d-2e+b_\gamma}(q'-1)^{c_\gamma}.
$$
With $t=q'-1$, each summand equals $(1+t)^{d-2e+b_\gamma}t^{c_\gamma}\in\BN[t]$, so $|\mathcal M_e(k')|$ is a polynomial in $q'-1$ with nonnegative integer coefficients. If such partitions exist with the same index set $\Gamma$ and the same integers $b_\gamma$ and $c_\gamma$ for all $q$, it follows that $N_{U,e}(q)\in\BN[q-1]$. Deodhar's decomposition provides such partitions in the bilinear cases (Section~\ref{geo:split-section}). In the unitary case no such stratification over $k$ exists in general (Remark~\ref{geo:extensions}). Instead, Section~\ref{geo:unitary-section} counts the rational points of the unitary rank loci directly, using relative Bruhat cells and unequal-parameter $R$-polynomials; this gives a polynomial in $\BN[q-1]$ that is valid for all odd $q$.

\section{Character degrees and polynomiality}\label{sec:counts}

\subsection{Rank counts over finite fields}
Recall that $N(a,r)$, $S(a,2s)$ and $C_{\mathrm{sh}}(a,r)$ denote the numbers of $a\times a$ symmetric matrices of rank $r$, skew-symmetric matrices of rank $2s$, and skew-Hermitian matrices of rank $r$, respectively. The following formulas are due to MacWilliams \cite[Theorem~2]{MacWilliams}, Carlitz \cite{Carlitz} and Carlitz--Hodges \cite{CarlitzHodges}. For the last one, note that multiplication by a nonzero scalar of trace zero is a rank-preserving bijection between Hermitian and skew-Hermitian matrices. In the formula for $N(a,r)$ we write $r=2s$ or $r=2s+1$.

\begin{lemma}\label{lem:counts}
We have
\begin{align}
 N(a,r)&=\gauss{a}{r}{q}\,q^{s(s+1)}
             \prod_{i=1}^{\lceil r/2\rceil}(q^{2i-1}-1),\label{eq:N}\\
 S(a,2s)&=\gauss{a}{2s}{q}\,q^{s(s-1)}
             \prod_{i=1}^{s}(q^{2i-1}-1),\label{eq:S}\\
 C_{\mathrm{sh}}(a,r)&=\gauss{a}{r}{q^2}\,q^{r(r-1)/2}
             \prod_{i=1}^{r}(q^i+(-1)^i).\label{eq:U}
\end{align}
\end{lemma}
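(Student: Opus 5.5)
The three formulas are classical; the plan is to derive them uniformly by counting nondegenerate forms and then choosing a support, rather than simply quoting \cite{MacWilliams,Carlitz,CarlitzHodges}.

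\emph{Step 1: reduction to the nondegenerate case.} A symmetric, skew-symmetric or skew-Hermitian $a\times a$ matrix $T$ of rank $r$ is the Gram matrix of a form on $k^a$ (respectively $K^a$) whose radical is $\ker T$, of codimension $r$. Such $T$ correspond bijectively to pairs consisting of an $r$-dimensional quotient $k^a/\ker T$ and a nondegenerate form of the same type on that quotient. Counting the subspaces $\ker T$ gives the Gaussian binomial $\gauss{a}{r}{q}$, or $\gauss{a}{r}{q^2}$ in the unitary case. Hence
$$
 N(a,r)=\gauss{a}{r}{q}N(r,r),\qquad S(a,2s)=\gauss{a}{2s}{q}S(2s,2s),\qquad C_{\mathrm{sh}}(a,r)=\gauss{a}{r}{q^2}C_{\mathrm{sh}}(r,r).
$$

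\emph{Step 2: counting the nondegenerate forms.} Each count is an orbit count:
$$
 N(r,r)=\sum_{\text{2 classes}}\frac{|\GL_r(q)|}{|\mathrm O(Q)|},\qquad
 S(2s,2s)=\frac{|\GL_{2s}(q)|}{|\Sp_{2s}(q)|},\qquad
 C_{\mathrm{sh}}(r,r)=\frac{|\GL_r(q^2)|}{|\Ugrp_r(q)|}.
$$
For the last count we use that multiplication by a nonzero trace-zero scalar identifies skew-Hermitian with Hermitian matrices, and that nondegenerate Hermitian forms form a single class. Inserting the standard group orders then gives
$$
 S(2s,2s)=q^{s(s-1)}\prod_{i=1}^{s}(q^{2i-1}-1),\qquad
 C_{\mathrm{sh}}(r,r)=q^{r(r-1)/2}\prod_{i=1}^{r}(q^i+(-1)^i).
$$
For example, $|\GL_{2s}|/|\Sp_{2s}|=q^{\binom{2s}{2}-s^2}\prod_i(q^{2i-1}-1)$, because the even factors $q^{2i}-1$ cancel. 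Similarly, $|\GL_r(q^2)|/|\Ugrp_r(q)|$ has $q$-power $q^{r(r-1)-r(r-1)/2}$ and factors $(q^{2i}-1)/(q^i-(-1)^i)=q^i+(-1)^i$.

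\emph{Step 3: the symmetric case.} Here there are two orthogonal classes in each dimension, distinguished by the discriminant, and their orthogonal groups have different orders when $r$ is even (plus and minus type). We add the two orbit sizes. For $r=2s+1$ the two orthogonal groups are isomorphic with order $2q^{s^2}\prod_{i\le s}(q^{2i}-1)$, so
$$
 N(r,r)=2\cdot\frac{|\GL_r(q)|}{2q^{s^2}\prod_{i\le s}(q^{2i}-1)}.
$$
Simplifying gives $q^{s(s+1)}\prod_{i=1}^{s+1}(q^{2i-1}-1)$. For $r=2s$ we use $\frac{1}{q^s-1}+\frac{1}{q^s+1}=\frac{2q^s}{q^{2s}-1}$, which produces the same shape with $\lceil r/2\rceil=s$ and exponent $s(s+1)$.

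Alternatively, all three counts satisfy simple recursions obtained by splitting off the first row and column, which can be checked by induction. The step most likely to cause trouble is the bookkeeping of powers of $q$ in the symmetric even-rank case, where the exponent $s(s+1)$ must come out of combining the two Witt types. I would verify it first for $r=1,2$: the counts are $N(1,1)=q-1$ and $N(2,2)=q^3-q^2=q^2(q-1)$, matching $q^{s(s+1)}(q-1)$ with $s=0$ and $s=1$ respectively.
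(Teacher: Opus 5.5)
Your proof is correct, and it differs from the paper, which gives no argument of its own. The paper quotes MacWilliams, Carlitz and Carlitz--Hodges and adds only the remark that multiplication by a nonzero trace-zero scalar of $K$ is a rank-preserving bijection between Hermitian and skew-Hermitian matrices; you use that remark too.

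Your derivation factors each count as (choice of the radical $\ker T$) times (number of nondegenerate forms on the quotient), and evaluates the second factor by orbit--stabilizer as $|\GL_r|/|\mathrm{Isom}|$. I checked the bookkeeping in all three cases. In the delicate even symmetric case, the identity $\frac{1}{q^s-1}+\frac{1}{q^s+1}=\frac{2q^s}{q^{2s}-1}$ does give the exponent $s(2s-1)-s(s-1)+s=s(s+1)$ and the factor $\prod_{i=1}^s(q^{2i-1}-1)$. This agrees with MacWilliams after writing her product $\prod_{i=0}^{2s-1}(q^{a-i}-1)$ as $\gauss{a}{2s}{q}\prod_{j=1}^{2s}(q^j-1)$.

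What your route buys:
\begin{itemize}
\item It is self-contained, modulo facts the paper already uses elsewhere: the classification of forms by discriminant (Section~\ref{sec:orthogonal}) and the orders of $\Sp$, $\mathrm O^\pm$ and $\Ugrp$.
\item It explains the shape of the formulas. In the bilinear cases your Step~1 is the point-count version of the kernel fibration $T\mapsto\ker T$ over $\operatorname{Gr}(a-r,E)$ used in the proof of Proposition~\ref{geo:rank-chart}.
\item It shows that the even-rank symmetric count combines the two Witt types, the same dichotomy behind Section~\ref{sec:orthogonal}.
\end{itemize}
The citation is shorter and does not need the group orders.

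One small point: for $r=0$ your Step~3 does not literally apply, since the zero space carries only one form and $\frac{1}{q^0-1}$ is undefined. Treat $N(a,0)=1$ separately; the formula gives $1$ there.
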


\subsection{The number of characters of each degree}
\begin{theorem}\label{thm:degrees}
For the three-block radicals in Table~\ref{tab:range}, the nonzero character-degree multiplicities are
$$
 \begin{aligned}
 N_{U,mr}(q)&=q^{2m(a-r)}N(a,r)
     &&(\mathrm C,\ M=2m,\ 0\le r\le a),\\
 N_{U,Ms}(q)&=q^{M(a-2s)}S(a,2s)
     &&(\mathrm B,\mathrm D^\pm,\ 0\le 2s\le a),\\
 N_{U,Mr}(q)&=q^{2M(a-r)}C_{\mathrm{sh}}(a,r)
     &&(\mathrm U,\ 0\le r\le a).
 \end{aligned}
$$
All other multiplicities vanish, and every multiplicity lies in $\BN[q-1]$.
\end{theorem}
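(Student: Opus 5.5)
The plan has two parts: read off the multiplicities from the results already established, then check positivity by substituting $q=1+t$ into the product formulas of Lemma~\ref{lem:counts}.

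\emph{Multiplicities.} By Theorem~\ref{thm:characters} and Proposition~\ref{prop:rank}, the characters with central parameter $T$ have degree $q^{e_T}$, where $2e_T=M\rank_kT$ in the bilinear cases and $2e_T=2M\rank_KT$ in the unitary case. There are exactly $q^{d-2e_T}$ such characters, with $d=\dim_kV$. The degree therefore depends only on the rank of $T$, and the count is a sum over rank strata.

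In type $\mathrm C$ the parameters $T$ run over $\Sym_a(k)$ and $d=2ma$. A parameter of rank $r$ gives degree $q^{mr}$ and contributes $q^{2ma-2mr}$ characters, so summing over the $N(a,r)$ such matrices gives $q^{2m(a-r)}N(a,r)$. In types $\mathrm B$ and $\mathrm D^\pm$ the parameters lie in $\Alt_a(k)$, so the rank is even, say $2s$. Here $e_T=Ms$ and $d-2e_T=M(a-2s)$, and the multiplier is $S(a,2s)$. In type $\mathrm U$ we have $d=2aM$ and $e_T=Mr$, so the count is $q^{2M(a-r)}C_{\mathrm{sh}}(a,r)$. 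Since $M>0$, distinct ranks give distinct degrees, so these are exactly the multiplicities and all others vanish. In type $\mathrm C$ the integer $M=2m$ is even because $W$ carries a nondegenerate alternating form. (In the orthogonal case $M$ may be odd, but $Ms$ is still an integer exponent.)

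\emph{Positivity.} It remains to show that each formula lies in $\BN[q-1]$. Since $\BN[t]$ is closed under sums and products, it suffices to check each factor after putting $q=1+t$.
\begin{itemize}
\item Powers of $q$, and hence of $q^2$, are plainly in $\BN[t]$.
\item Every factor $q^j-1$ with $j\ge1$ is $t(1+q+\dots+q^{j-1})\in\BN[t]$.
\item The Gaussian binomials $\gauss{a}{r}{q}$ and $\gauss{a}{r}{q^2}$ are polynomials in $q$ with nonnegative coefficients, by the standard counting of partitions in a box. Substituting $q=1+t$ therefore keeps them in $\BN[t]$.
\end{itemize}

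\emph{The unitary signs.} The one delicate point is the factor $q^i+(-1)^i$ in \eqref{eq:U}. For even $i$ it is $q^i+1\in\BN[t]$. For odd $i$ it is $q^i-1\in\BN[t]$, by the second bullet above. So the alternating sign causes no trouble.

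I expect the only possible obstacle to be bookkeeping rather than mathematics. One has to confirm that the exponents in \eqref{eq:N}–\eqref{eq:U} are nonnegative; they are, with $s=\lfloor r/2\rfloor$ in \eqref{eq:N}. One also has to make sure no hidden division remains: the $q$-binomials are genuine polynomials, not merely rational functions. Once the factors are checked, closure of $\BN[t]$ under sums and products finishes the proof.
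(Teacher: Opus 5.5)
Your proposal is correct and follows the paper's proof. Each central parameter $T$ contributes $|R_T|$ characters of degree $q^{\rank_k(B_T)/2}$, so Theorem~\ref{thm:characters}, Proposition~\ref{prop:rank} and Lemma~\ref{lem:counts} give the multiplicities, and positivity follows factor by factor after substituting $q=1+t$, with $q^i+(-1)^i$ handled by the parity of $i$ exactly as you do.
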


\begin{proof}
By Theorem~\ref{thm:characters}, a central parameter $T$ contributes $|R_T^*|=|R_T|$ characters, each of degree $q^{\rank_k(B_T)/2}$. The formulas therefore follow from Proposition~\ref{prop:rank} and Lemma~\ref{lem:counts}. (In the language of Proposition~\ref{prop:parameter-varieties}, $|R_T|$ is the number of rational points in a fiber of $\mathcal M_e\to\mathcal Y_e$.)

For positivity, let $t=q-1$. Since $\gauss{a}{r}{Q}\in\BN[Q]$, we have
$$
 \gauss{a}{r}{1+t}\in\BN[t],\qquad
 \gauss{a}{r}{(1+t)^2}\in\BN[t],
$$
and $(1+t)^j-1$ and $(1+t)^j+1$ lie in $\BN[t]$ for every $j\ge1$. The factor $q^i+(-1)^i$ in~\eqref{eq:U} is $q^i-1$ for odd $i$ and $q^i+1$ for even $i$. Hence all multiplicities lie in $\BN[q-1]$.
\end{proof}

Theorem~\ref{thm:main-degrees} follows from Theorem~\ref{thm:degrees}. For the abelian endpoints of Section~\ref{sec:model}, all irreducible characters are linear and their number is the order of the group.

\subsection{Unitary groups of both parities}\label{sec:unitary-both}
Theorem~\ref{thm:degrees} treats both parities of $N$ at once. For $M=N-2a>0$ it gives
\begin{equation}\label{unitary:all-count}
 N_{U,Mr}(q)=q^{2M(a-r)+r(r-1)/2}\gauss{a}{r}{q^2}
 \prod_{i=1}^r(q^i+(-1)^i),\qquad 0\le r\le a.
\end{equation}
In particular, the endpoint $a=n$ of $\Ugrp_{2n+1}(q)$ has $M=1$, while the endpoint of $\Ugrp_{2n}(q)$ has $M=0$ and is abelian. We complement this with a description of the conjugacy classes.

\begin{proposition}\label{unitary:classes}
In the unitary model with $M>0$, if $\rank_K X=s$, then the conjugacy class of $u_J(X,Z)$ has $q^{s(2a-s)}$ elements. Consequently,
\begin{equation}\label{unitary:classnumber}
 k(U_{a,J})=
 \sum_{s=0}^{\min(a,M)}
 \gauss{a}{s}{q^2}\gauss{M}{s}{q^2}
 |\GL_s(q^2)|\,q^{(a-s)^2}.
\end{equation}
\end{proposition}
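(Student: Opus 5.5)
By Section~\ref{sec:conjugacy-classes}, the conjugacy class of $v+z$ is $v+z+[V,v]$, of size $q^{\dim_k[V,v]}$. So the proof has two steps: compute $[V,v]$ for $v=X$ of $K$-rank $s$, and then sum over $X$ using \eqref{eq:classcount}.

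\textbf{Step 1: the image $[V,X]$.} I claim
$$
[V,X]=\{YJ^{-1}X^*-XJ^{-1}Y^*:Y\in V\}=\{C-C^*:C\in\Mat_a(K),\ \text{the columns of }C\text{ lie in }\operatorname{col}(X)\}.
$$
Put $C=XJ^{-1}Y^*$. As $Y$ ranges over $V$, the matrix $J^{-1}Y^*$ ranges over all of $\Mat_{M\times a}(K)$, because $J$ is invertible. Hence $C$ ranges over all $XD$, that is, over all $a\times a$ matrices whose columns lie in the column space $S=\operatorname{col}(X)$, which has $K$-dimension $s$. The bracket is $-(C-C^*)$, which gives the claim.

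To compute the $k$-dimension, choose $g\in\GL_a(K)$ with $gS=\langle e_1,\dots,e_s\rangle$. The congruence $Z\mapsto gZg^*$ preserves skew-Hermitian matrices and $k$-dimensions. It sends $C-C^*$ to $C'-C'^*$, where $C'=gCg^*$ has nonzero entries only in its first $s$ rows. Thus $[V,X]$ is identified with the set of skew-Hermitian matrices supported on entries $(i,j)$ with $\min(i,j)\le s$. The map $C'\mapsto C'-C'^*$ reaches each such matrix:
\begin{itemize}
\item an entry $(i,j)$ with $i\le s<j$ comes from $C'_{ij}$;
\item the top-left $s\times s$ skew-Hermitian block is $C'_{11}-C'^*_{11}$, and every skew-Hermitian matrix $H$ arises this way by taking $C'_{11}=H/2$.
\end{itemize}
Counting $k$-dimensions, the top-left block contributes $s^2$ and the off-diagonal rectangle contributes $2s(a-s)$. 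Hence
$$
\dim_k[V,X]=s^2+2s(a-s)=s(2a-s),
$$
which is the claimed class size $q^{s(2a-s)}$.

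\textbf{Step 2: the class number.} By \eqref{eq:classcount}, with $\dim_k\mathcal Z=a^2$,
$$
k(U)=\sum_X q^{a^2-s(2a-s)}=\sum_X q^{(a-s)^2}.
$$
The number of $a\times M$ matrices over $K$ of rank $s$ is the standard count
$$
\gauss{a}{s}{q^2}\gauss{M}{s}{q^2}|\GL_s(q^2)|.
$$
To see this, choose the column space (an $s$-dimensional subspace of $K^a$) and the row space (an $s$-dimensional subspace of $K^M$). The matrices with this pair of spaces then correspond bijectively to $\GL_s(K)$. Summing over $s$ from $0$ to $\min(a,M)$ gives \eqref{unitary:classnumber}.

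The main point requiring care is Step~1: one must check that the image is the full space of skew-Hermitian matrices supported on the first $s$ rows and columns, not a proper $k$-subspace. The factor $\tfrac12$ used there is available because $q$ is odd.
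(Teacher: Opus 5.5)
Your proof is correct and follows essentially the paper's route. Both arguments use a congruence $Z\mapsto gZg^*$ to reach a normal form in which $[V,X]$ is visibly the space of skew-Hermitian matrices supported on the first $s$ rows and columns, of $k$-dimension $s^2+2s(a-s)$, and then sum $q^{(a-s)^2}$ over the rank-$s$ matrices. The only cosmetic difference is that you normalize just the column space of $X$, via the description $\{C-C^*:\operatorname{col}(C)\subseteq\operatorname{col}(X)\}$, whereas the paper uses the two-sided equivariance $I_{gXh}=gI_Xg^*$ to reduce to $X=\left(\begin{smallmatrix}I_s&0\\0&0\end{smallmatrix}\right)$.
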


\begin{proof}
By Section~\ref{sec:conjugacy-classes}, the class of $u_J(X,Z)$ has $|[V,X]|$ elements. Since $J$ is invertible, the substitution $H=-YJ^{-1}$ gives
$$
 [V,X]=\{HX^*-XH^*:H\in\Mat_{a\times M}(K)\}=:I_X.
$$
If $X'=gXh$ with $g\in\GL_a(K)$ and $h\in\GL_M(K)$, then writing $H=g^{-1}H'h^*$ we get
$$
 H'X'^*-X'H'^*=g(HX^*-XH^*)g^*,
$$
so $I_{X'}=gI_Xg^*$. Hence $\dim_kI_X$ depends only on $s=\rank_KX$, and we may assume
$$
 X=\begin{pmatrix}I_s&0\\0&0\end{pmatrix}.
$$
Writing
$$
 H=\begin{pmatrix}A&B\\C&D\end{pmatrix},
 \qquad A\in\Mat_s(K),\quad
 C\in\Mat_{(a-s)\times s}(K),
$$
in the corresponding block form, we obtain
$$
 HX^*-XH^*=\begin{pmatrix}A-A^*&-C^*\\C&0\end{pmatrix}.
$$
Therefore $\dim_kI_X=s^2+2s(a-s)=s(2a-s)$, and the class has $q^{s(2a-s)}$ elements.

For fixed $X$, the classes with horizontal component $X$ correspond to the cosets of $I_X$ in $\SHerm_a(K)$. As $\dim_k\SHerm_a(K)=a^2$, there are $q^{a^2-s(2a-s)}=q^{(a-s)^2}$ of them. Finally, the number of matrices of rank $s$ in $\Mat_{a\times M}(K)$ is
$$
 \gauss{a}{s}{q^2}\gauss{M}{s}{q^2}|\GL_s(q^2)|,
$$
and summing over $s$ gives~\eqref{unitary:classnumber}.
\end{proof}

\begin{proposition}\label{unitary:odd-endpoint}
Let $U$ be the radical at the endpoint $a=n$ of $\Ugrp_{2n+1}(q)$. Then $|U|=q^{n^2+2n}$ and $U$ has nilpotency class two. The number of its irreducible characters of degree $q^r$ is
$$
 q^{2(n-r)+r(r-1)/2}\gauss{n}{r}{q^2}
 \prod_{i=1}^r(q^i+(-1)^i),\qquad 0\le r\le n.
$$
The center of $U$ has order $q^{n^2}$, every noncentral conjugacy class has $q^{2n-1}$ elements, and
$$
 k(U)=q^{n^2}+q^{(n-1)^2}(q^{2n}-1).
$$
\end{proposition}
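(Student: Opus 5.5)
This proposition is the specialization $a=n$, $M=1$ of the unitary model, and I would derive each assertion from the results already stated for that model. The order of the radical is $q^{2aM+a^2}$, which with $a=n$ and $M=1$ becomes $q^{2n+n^2}$. Since $M=1>0$, the unitary computation in Section~\ref{sec:model} gives $[V,V]=\mathcal Z\neq0$ whenever $a\ge1$, and $\mathcal Z$ is central. Hence $U$ has class exactly two.

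For the degree count I would substitute $M=1$ and $a=n$ into~\eqref{unitary:all-count}. This gives exactly the displayed formula, with $q^{2(n-r)}$ coming from $q^{2M(a-r)}$.

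For the center, recall from Section~\ref{sec:conjugacy-classes} that $u_J(X,Z)$ is central if and only if $[V,X]=0$. By Proposition~\ref{unitary:classes}, $|[V,X]|=q^{s(2a-s)}$ with $s=\rank_KX$, and this is trivial only when $s=0$. So the center is $\{u_J(0,Z)\}\simeq\SHerm_n(K)$, of order $q^{n^2}$.

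For the noncentral classes, $M=1$ means $V=K^n$ consists of column vectors, so a nonzero $X$ has rank $s=1$. The class size is then $q^{1\cdot(2n-1)}=q^{2n-1}$.

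For the class number I would evaluate~\eqref{unitary:classnumber} with $\min(a,M)=1$. The term $s=0$ contributes $q^{n^2}$. For $s=1$ the number of nonzero vectors is $\gauss{n}{1}{q^2}\cdot1\cdot(q^2-1)=q^{2n}-1$, and each contributes $q^{(n-1)^2}$ classes. Alternatively one counts $q^{n^2}$ central classes plus $(q^{2n}-1)q^{n^2}/q^{2n-1}$ noncentral classes, and checks that $n^2-2n+1=(n-1)^2$.

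The only point needing care is the case $n=1$. There $\mathcal Z=\SHerm_1(K)$ is one-dimensional, and the diagonal computation in Section~\ref{sec:model} still shows the bracket is nonzero, so class two still holds. Everything else is direct substitution.
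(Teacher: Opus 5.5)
Your proposal is correct and takes essentially the same route as the paper: every assertion is obtained by specializing \eqref{unitary:all-count} and Proposition~\ref{unitary:classes} to $a=n$, $M=1$. The only difference is in identifying $Z(U)=\mathcal Z$: you use the class-size formula $q^{s(2a-s)}$ (an element is central iff $s=0$), whereas the paper shows directly that the centralizer in $V$ of a nonzero $x$ is the line $kx$, a fact it reuses in Remark~\ref{unitary:not-product}.
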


\begin{proof}
Here $M=1$ and $J=(d)$ for some $d\in k^\times$, so
$$
 V=K^n,\qquad \mathcal Z=\SHerm_n(K),\qquad
 \beta_J(x,y)=d^{-1}(yx^*-xy^*).
$$
Since $\dim_kV=2n$ and $\dim_k\mathcal Z=n^2$, we have $|U|=q^{n^2+2n}$, and the class is two because the bracket is nonzero with values in the central summand $\mathcal Z$.

Let $x\ne0$, and suppose that $y\ne0$ satisfies $yx^*=xy^*$. Comparing the images of these rank-one matrices gives $y=cx$ for some $c\in K$, and then $cxx^*=\bar cxx^*$, so $c=\bar c\in k$. Conversely every $y\in kx$ commutes with $x$. Thus the centralizer of $x$ in $V$ is the line $kx$. In particular no nonzero element of $V$ commutes with all of $V$, so
$$
 Z(U)=\mathcal Z,\qquad |Z(U)|=q^{n^2}.
$$
For $x\ne0$, Proposition~\ref{unitary:classes} with $s=1$ shows that the class has $q^{2n-1}$ elements, and hence
$$
 k(U)=q^{n^2}+\frac{q^{n^2+2n}-q^{n^2}}{q^{2n-1}}
     =q^{n^2}+q^{(n-1)^2}(q^{2n}-1).
$$
The character-degree multiplicities are obtained by putting $M=1$ and $a=n$ in~\eqref{unitary:all-count}.
\end{proof}

\begin{remark}\label{unitary:not-product}
For $n\ge2$, the group of Proposition~\ref{unitary:odd-endpoint} is not a product of two abelian subgroups. Indeed, since the centralizer in $V$ of every nonzero $x$ is $kx$, the image in $V$ of an abelian subgroup lies in a $k$-line. Two such images cannot span $V$, whose $k$-dimension is $2n\ge4$.
\end{remark}

\section{Induced models in the hyperbolic cases}\label{sec:clifford}

Theorem~\ref{thm:characters} already gives all irreducible characters. When the residual space is hyperbolic of dimension $M=2m>0$, however, the characters can also be written down as explicitly induced characters, using the little-group method for semidirect products with an abelian normal subgroup \cite[\S8.2, Proposition~25]{Serre}. This covers the symplectic, the split even orthogonal and the even-dimensional unitary cases.

Take
$$
 J=\begin{pmatrix}0&I_m\\\epsilon I_m&0\end{pmatrix},
 \qquad \mathscr M=\Mat_{a\times m}(k_{\mathsf X}),
$$
where $k_{\mathsf X}=k$ in the bilinear cases and $k_{\mathsf X}=K$, $\epsilon=1$, in the unitary case. Write $\dagger$ for the transpose or the conjugate transpose, accordingly. For $P,Q\in\mathscr M$ put
$$
 h(P)=u_J((P,0),0),\qquad a(Q,Z)=u_J((0,Q),Z),
$$
where $(P,Q)$ denotes the $a\times 2m$ matrix with blocks $P$ and $Q$. By~\eqref{eq:BCH}, $H=\{h(P)\}$ and $A=\{a(Q,Z)\}$ are abelian subgroups, $A$ is normal, $U=A\rtimes H$, and
\begin{align}
 h(P)a(Q,Z)h(P)^{-1}
 &=a(Q,Z+QP^\dagger-\epsilon PQ^\dagger),\label{eq:conjugation-A}\\
 u_J((P,Q),Z)
 &=a\bigl(Q,Z+\tfrac12(QP^\dagger-\epsilon PQ^\dagger)\bigr)h(P).
 \label{eq:coordinate-conversion}
\end{align}
We use the central pairing $\lambda_T$ of Section~\ref{sec:characters}, and on $\mathscr M$ the pairing
$$
 \langle Y,Q\rangle_0=
 \begin{cases}
 -\epsilon\tr(Y^tQ),&k_{\mathsf X}=k,\\
 -\Tr_{K/k}\tr(Y^*Q),&k_{\mathsf X}=K.
 \end{cases}
$$
These nondegenerate trace pairings identify $\widehat A$ with the set of characters
\begin{equation}\label{eq:dual-A}
 \alpha_{T,Y}(a(Q,Z))=\psi\bigl(\lambda_T(Z)+\langle Y,Q\rangle_0\bigr).
\end{equation}
With the action $(h\cdot\alpha)(x)=\alpha(h^{-1}xh)$, substituting~\eqref{eq:conjugation-A} gives
\begin{equation}\label{eq:dual-translation}
 h(P)\cdot(T,Y)=(T,Y-TP).
\end{equation}
Hence the $H$-orbits on $\widehat A$ and their stabilizers are described by
\begin{equation}\label{eq:quotient-kernel}
 \mathcal Q_T=\mathscr M/T\mathscr M,\qquad
 \mathscr K_T=\{P\in\mathscr M:TP=0\}.
\end{equation}

\begin{proposition}\label{thm:clifford-parameters}
Let $T\in\mathcal Z^*$, $\bar Y\in\mathcal Q_T$ and $\tau\in\widehat{\mathscr K_T}$. Choose a representative $Y$ of $\bar Y$, and extend $\alpha_{T,Y}$ to a character $\widetilde\alpha_{T,Y}$ of $A\rtimes h(\mathscr K_T)$ that is trivial on $h(\mathscr K_T)$. The induced characters
$$
 \chi_{T,\bar Y,\tau}
 =\Ind_{A\rtimes h(\mathscr K_T)}^U(\widetilde\alpha_{T,Y}\otimes\tau)
$$
are irreducible, pairwise distinct, and exhaust $\Irr(U)$. If $r=\rank_{k_{\mathsf X}}T$, then $\chi_{T,\bar Y,\tau}$ has degree $|k_{\mathsf X}|^{mr}$, and
\begin{equation}\label{eq:induced-values}
 \chi_{T,\bar Y,\tau}(a(Q,Z)h(P))=
 \begin{cases}
 |k_{\mathsf X}|^{mr}\tau(P)\alpha_{T,Y}(a(Q,Z)),&TP=TQ=0,\\
 0,&\text{otherwise}.
 \end{cases}
\end{equation}
Moreover, $\chi_{T,\bar Y,\tau}$ coincides with the character $\chi_{T,\eta}$ of Theorem~\ref{thm:characters}, where $\eta\in R_T^*$ is defined in the proof.
\end{proposition}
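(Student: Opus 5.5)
The plan is to apply the little-group method for $A\rtimes H$ with $A$ abelian normal \cite[\S8.2, Proposition~25]{Serre}, and then compare the result with Theorem~\ref{thm:characters}.

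\textbf{Orbits and stabilizers.} By~\eqref{eq:dual-translation}, $H\simeq\mathscr M$ acts on $\widehat A$ by $(T,Y)\mapsto(T,Y-TP)$. So the orbit of $\alpha_{T,Y}$ is $\{T\}\times(Y+T\mathscr M)$, and the orbits are indexed by pairs $(T,\bar Y)$ with $\bar Y\in\mathcal Q_T$. The stabilizer of every point in the orbit is $h(\mathscr K_T)$, which does not depend on $Y$.

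\textbf{Clifford theory.} Because $H$ is abelian, $\alpha_{T,Y}$ extends to $A\rtimes h(\mathscr K_T)$ by setting it trivial on $h(\mathscr K_T)$. This is a homomorphism precisely because $h(\mathscr K_T)$ fixes $\alpha_{T,Y}$. Serre's proposition then says the induced characters are irreducible, are pairwise distinct for distinct $(T,\bar Y,\tau)$, and exhaust $\Irr(U)$.

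\textbf{Degree.} The degree is the index $[H:h(\mathscr K_T)]=|T\mathscr M|$. Since $T\mathscr M$ is the column space of $T$ taken $m$ times, this index is $|k_{\mathsf X}|^{mr}$.

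\textbf{Values.} Use the induced-character formula with coset representatives $h(P')$, where $P'$ runs over $\mathscr M/\mathscr K_T$. Conjugating $a(Q,Z)h(P)$ by $h(P')$ preserves the $H$-component $P$ modulo $A$. So the induced character vanishes unless $P\in\mathscr K_T$. When $P\in\mathscr K_T$, compute with~\eqref{eq:conjugation-A}:
$$
h(P')^{-1}a(Q,Z)h(P')=a(Q,Z-QP'^\dagger+\epsilon P'Q^\dagger).
$$
The character then picks up the factor $\psi(\lambda_T(-QP'^\dagger+\epsilon P'Q^\dagger))$. By the computation already done in Proposition~\ref{prop:rank}, this equals $\psi(\pm\langle TP',Q\rangle_0)$ up to a uniform sign convention.

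Summing over $P'\in\mathscr M/\mathscr K_T$ gives a sum over $TP'$ ranging through $T\mathscr M$. That sum is $|T\mathscr M|$ if $Q$ is orthogonal to $T\mathscr M$, which is equivalent to $TQ=0$ after adjusting by the adjoint $T^\dagger=\mp T$, and it is $0$ otherwise. This yields~\eqref{eq:induced-values}.

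\textbf{Comparison with $\chi_{T,\eta}$.} Let $X=(P,Q)$. The conditions $TP=TQ=0$ say exactly that $TX=0$, i.e.\ $X\in R_T$ by Proposition~\ref{prop:rank}. Rewrite the nonzero value~\eqref{eq:induced-values} in the coordinates $u_J(X,Z')$ using~\eqref{eq:coordinate-conversion}. The correction term $\tfrac12(QP^\dagger-\epsilon PQ^\dagger)$ is killed by $\lambda_T$ whenever $TP=TQ=0$, since $\lambda_T$ of it is a trace of $T$ times something that vanishes. So the value becomes $q^{e_T}\psi(\lambda_T(Z')+\eta(X))$, with degree matching because $q^{e_T}=q^{Mr/2}$ or $q^{Mr}$ as appropriate. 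Here $\eta$ is defined on $R_T$ by
$$
\eta(P,Q)=\langle Y,Q\rangle_0+\tau(P),
$$
identifying $\tau$ with an element of $\mathscr K_T^*$ via $\psi$. This is well defined because changing $Y$ by $TP''$ changes $\langle Y,Q\rangle_0$ by a multiple of $\langle TP'',Q\rangle_0$, which vanishes when $TQ=0$. Two characters with equal values coincide, so $\chi_{T,\bar Y,\tau}=\chi_{T,\eta}$.

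\textbf{Main obstacle.} The step I expect to be delicate is the sign and adjoint bookkeeping in the value computation. One must show that $\lambda_T(QP'^\dagger-\epsilon P'Q^\dagger)$ really equals a nondegenerate pairing of $TP'$ against $Q$, uniformly across the symplectic, orthogonal and unitary cases, including the $\tfrac12$ in $\lambda_T$ and the use of $\Tr_{K/k}$ in the unitary case. I would handle this by reducing to the identity $\tr(TQP'^\dagger)=\mp\tr((TP')^\dagger Q)$, which follows from $T^\dagger=-\epsilon T$.
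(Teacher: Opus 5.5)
Your argument is correct. The first half coincides with the paper's: orbits $\{T\}\times(Y+T\mathscr M)$, stabilizer $h(\mathscr K_T)$, Clifford theory via \cite[\S8.2, Proposition~25]{Serre}, and degree $|T\mathscr M|=|k_{\mathsf X}|^{mr}$. The second half takes a different route.

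The paper never evaluates the induced character directly. It notes that $\lambda_T(QP^\dagger-\epsilon PQ^\dagger)=0$ for $P\in\mathscr K_T$. Hence $A\rtimes h(\mathscr K_T)=\exp(\mathfrak h\oplus\mathcal Z)$, where $\mathfrak h=\mathscr K_T\oplus\mathscr M$ is a polarization for $B_T$, and the inducing character is $\psi$ of a linear functional extending $(\eta,\lambda_T)$. The induction statement of the orbit method \cite[Theorem~3.1(2)]{OV} then gives $\chi_{T,\bar Y,\tau}=\chi_{T,\eta}$ at once. Finally, \eqref{eq:induced-values} is read off from \eqref{eq:character-values} and \eqref{eq:coordinate-conversion}.

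You go the other way. You obtain \eqref{eq:induced-values} from the Frobenius formula and orthogonality of additive characters on $T\mathscr M$, and only then identify the character by comparing values with \eqref{eq:character-values}. Your route is more elementary: it uses only the character formula of Theorem~\ref{thm:characters}, not the polarization part of Kirillov theory. It also serves as an independent check of that formula in the hyperbolic cases. The paper's route is shorter and needs no character sum.

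The step you flag as the main obstacle is already contained in \eqref{eq:dual-translation}. The summand is $\alpha_{T,Y}(h(P')^{-1}a(Q,Z)h(P'))=\alpha_{T,Y-TP'}(a(Q,Z))$. This amounts to the exact identity $\lambda_T(QP'^\dagger-\epsilon P'Q^\dagger)=\langle TP',Q\rangle_0$ in all three cases; for instance, $\tfrac12\tr\bigl(T(QP'^t-\epsilon P'Q^t)\bigr)=\tr(P'^tTQ)=-\epsilon\tr((TP')^tQ)$. So the factor you pick up is exactly $\psi(-\langle TP',Q\rangle_0)$, with no sign ambiguity.

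Moreover, $\langle TP',Q\rangle_0=\tr(P'^\dagger TQ)$, with $\Tr_{K/k}$ applied in the unitary case. Nondegeneracy of the trace pairing then shows that this vanishes for all $P'$ exactly when $TQ=0$. The same identity with $P'=P\in\mathscr K_T$ is what kills the correction term in \eqref{eq:coordinate-conversion}, in your proof and in the paper's alike.
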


\begin{proof}
By~\eqref{eq:dual-translation}, the $H$-orbits on $\widehat A$ are labelled by the pairs $(T,\bar Y)$, and the stabilizer of each point of the orbit is $h(\mathscr K_T)$. The little-group method therefore gives the classification, and the degree is $|H|/|\mathscr K_T|=|k_{\mathsf X}|^{mr}$ because $\dim_{k_{\mathsf X}}\mathscr K_T=m(a-r)$.

To compare with Theorem~\ref{thm:characters}, note that the relation $T^\dagger=-\epsilon T$ shows that the annihilator of $\mathscr K_T$ under the rectangular trace pairing is $T\mathscr M$. Thus $\bar Y\mapsto\langle Y,-\rangle_0$ identifies $\mathcal Q_T$ with $\mathscr K_T^*$ over $k$. Write $\tau=\psi\circ\eta_H$ and put
$$
 \eta(P,Q)=\eta_H(P)+\langle Y,Q\rangle_0
 \qquad\text{on}\qquad R_T=\mathscr K_T\oplus\mathscr K_T.
$$
If $P\in\mathscr K_T$, then $TP=0$ and $P^\dagger T=0$, so $\lambda_T(QP^\dagger-\epsilon PQ^\dagger)=0$ for every $Q\in\mathscr M$. Consequently the inducing subgroup is $\exp(\mathfrak h\oplus\mathcal Z)$, where $\mathfrak h=\mathscr K_T\oplus\mathscr M\subseteq V$ is a maximal isotropic subspace for $B_T$, i.e.\ a polarization, and the inducing character is the character of this subgroup given by a linear functional extending $(\eta,\lambda_T)$. By \cite[Theorem~3.1(2)]{OV}, the induced character is $\chi_{T,\eta}$. Formula~\eqref{eq:induced-values} now follows from~\eqref{eq:character-values} and~\eqref{eq:coordinate-conversion}.
\end{proof}

For fixed $T$ there are $|k_{\mathsf X}|^{2m(a-r)}$ labels $(\bar Y,\tau)$, in agreement with Theorem~\ref{thm:degrees}.

\section{Witt-type independence and Brauer pairs}\label{sec:orthogonal}

Throughout this section $J$ is an invertible symmetric matrix of size $M\geq1$ over $k=\BF_q$, $q$ odd, and $U_{a,J}$ is the orthogonal radical with logarithmic coordinates
$$
 (X,Z)\in V\oplus\mathcal Z,
 \qquad V=\Mat_{a\times M}(k),\qquad \mathcal Z=\Alt_a(k),
$$
and bracket
$$
 \beta_J(X,Y)=YJ^{-1}X^t-XJ^{-1}Y^t.
$$
As before, $u_J(X,Z)=\exp L(X,Z)$; its upper right block is $Z-\frac12XJ^{-1}X^t$. On pure tensors $X=ev^t$ and $Y=fw^t$, with $e,f\in k^a$ and $v,w\in k^M$, the bracket is
$$
 \beta_J(ev^t,fw^t)=(v^tJ^{-1}w)\,(fe^t-ef^t),
$$
that is, the symmetric form $J^{-1}$ on $k^M$ tensored with the exterior product on $k^a$. For $p$-groups of class two studied through their commutator maps, including tensor-product constructions and groups of orthogonal type, see Wilson \cite[Section~7]{Wilson}.

Theorem~\ref{thm:degrees} already covers the non-split orthogonal case: for $M=2(n-a)$ the number of characters of degree $q^{Ms}$ is $q^{M(a-2s)}S(a,2s)$. At $a=n-1$ the two terminal nodes form a single Frobenius orbit, which gives a maximal rational parabolic subgroup with anisotropic residual plane, whereas the split stabilizer of the same isotropic dimension is not maximal (Section~\ref{sec:model}). In this section we compare the groups themselves: they share their character tables and power maps, but they are not always isomorphic.

\subsection{Independence of the character table}

\begin{theorem}\label{orth:table}
Fix $a\geq1$ and $M\geq1$. If $J$ and $J'$ are invertible symmetric $M\times M$ matrices, then there are explicit bijections between the irreducible characters and between the conjugacy classes of $U_{a,J}$ and $U_{a,J'}$ under which the two character tables coincide. The bijection between classes preserves class sizes and all power maps.
\end{theorem}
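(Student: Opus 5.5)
The plan is to show that, once everything is written in the logarithmic coordinates $(X,Z)\in V\oplus\mathcal Z$ of Proposition~\ref{prop:model}, none of the relevant data depends on $J$. We then take every bijection to be the identity on labels and coordinates: $u_J(X,Z)\leftrightarrow u_{J'}(X,Z)$ on elements, and $\chi_{T,\eta}\leftrightarrow\chi_{T,\eta}$ on characters. The group laws differ, because $\beta_J\neq\beta_{J'}$ in general. What we need is that classes, character values and powers, expressed in these coordinates, are the same sets and functions for both groups.

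\emph{Step 1: conjugacy classes.} By Section~\ref{sec:conjugacy-classes}, the class of $u_J(X,Z)$ is $\{u_J(X,Z+C):C\in[V,X]_J\}$, where $[V,X]_J=\{\beta_J(Y,X):Y\in V\}$. Now $\beta_J(Y,X)=XJ^{-1}Y^t-YJ^{-1}X^t$. Substituting $H=-YJ^{-1}$, which runs over all of $V$ as $Y$ does because $J$ is invertible and symmetric, gives
$$
 [V,X]_J=\{HX^t-XH^t:H\in V\}.
$$
This is the same computation as in the proof of Proposition~\ref{unitary:classes}, and the right-hand side is visibly independent of $J$. Hence the identity map on coordinates carries classes of $U_{a,J}$ onto classes of $U_{a,J'}$. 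In particular it preserves class sizes $q^{\dim[V,X]}$.

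\emph{Step 2: characters.} By Theorem~\ref{thm:main} and Proposition~\ref{prop:rank}, $\Irr(U_{a,J})$ is labelled by pairs $(T,\eta)$ with $T\in\Alt_a(k)$ and $\eta\in R_T^*$. Here $R_T=\{X\in V:TX=0\}$ does not involve $J$. The value of $\chi_{T,\eta}$ at $u_J(X,Z)$ is $q^{e_T}\psi(\lambda_T(Z)+\eta(X))$ if $X\in R_T$ and $0$ otherwise, with $2e_T=M\rank T$. This is again independent of $J$. So under the identity bijections of Steps~1 and~2 the two character tables coincide entry by entry.

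\emph{Step 3: power maps.} Since $u_J(X,Z)=\exp L(X,Z)$ and $L(X,Z)$ commutes with itself, we get $u_J(X,Z)^m=\exp(mL(X,Z))=u_J(mX,mZ)$ for every integer $m$. Equivalently, apply~\eqref{eq:BCH} with $\beta_J(X,X)=0$. The formula $(X,Z)\mapsto(mX,mZ)$ is independent of $J$, so the class bijection commutes with all power maps.

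The only point I expect to need care is Step~1: the class bijection must be well defined on classes rather than just on elements. This is exactly the $J$-independence of $[V,X]_J$, and it rests on the invertibility of $J$ in the substitution $H=-YJ^{-1}$. Everything else is read off from Theorem~\ref{thm:main}, Proposition~\ref{prop:rank} and~\eqref{eq:BCH}.
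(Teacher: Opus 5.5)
Your proposal is correct and takes essentially the same approach as the paper. Both identify the two groups with $V\oplus\mathcal Z$ through logarithmic coordinates, show that the conjugation image of $X$ is the $J$-independent space $\{HX^t-XH^t\}$, note that the labels $(T,\eta)$ and the values from Theorem~\ref{thm:characters} and Proposition~\ref{prop:rank} do not involve $J$, and use $u_J(X,Z)^j=u_J(jX,jZ)$ for the power maps; the only differences are cosmetic (the paper substitutes $H=YJ^{-1}$ into $\beta_J(X,Y)$ and records $I_{jX}=jI_X$, which your element-level argument for the power maps makes unnecessary).
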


\begin{proof}
We identify the underlying sets of the two groups through their logarithmic coordinates. For $X\in V$ let
$$
 I_X=\{HX^t-XH^t:H\in\Mat_{a\times M}(k)\}\subseteq\mathcal Z.
$$
Taking $H=YJ^{-1}$ we have $\beta_J(X,Y)=HX^t-XH^t$, and $H$ runs over all of $\Mat_{a\times M}(k)$ as $Y$ does, because $J$ is invertible. By Section~\ref{sec:conjugacy-classes}, the conjugacy class of $u_J(X,Z)$ is therefore
\begin{equation}\label{orth:classes}
 \mathcal C_J(X,Z+I_X):=\{u_J(X,Z+D):D\in I_X\},
\end{equation}
and $I_X$ does not depend on $J$. Hence
$$
 \mathcal C_J(X,Z+I_X)\longmapsto\mathcal C_{J'}(X,Z+I_X)
$$
is a well-defined bijection between the conjugacy classes of the two groups, and corresponding classes both have $|I_X|$ elements.

By Theorem~\ref{thm:characters} and Proposition~\ref{prop:rank}, the irreducible characters of both groups are parametrized by the same pairs
$$
 (T,\eta),\qquad T\in\Alt_a(k),\quad
 \eta\in\{X:TX=0\}^*,
$$
and, writing $\rank T=2s$,
$$
 \chi_{T,\eta}(u_J(X,Z))=
 \begin{cases}
 q^{Ms}\psi\bigl(\frac12\tr(TZ)+\eta(X)\bigr),&TX=0,\\
 0,&TX\ne0.
 \end{cases}
$$
Neither the parameters nor the values depend on $J$, so the character tables coincide. Finally, since $u_J(X,Z)=\exp L(X,Z)$,
$$
 u_J(X,Z)^j=\exp(jL(X,Z))=u_J(jX,jZ)
$$
for every integer $j$, where on the right $j$ is read in $k$. As $I_{jX}=jI_X$, the bijection between classes commutes with all power maps.
\end{proof}

\begin{proposition}\label{orth:classcount}
If $\rank X=r$, then the conjugacy class of $u_J(X,Z)$ has $q^{ar-r(r+1)/2}$ elements. Consequently,
\begin{equation}\label{orth:classnumber}
 k(U_{a,J})=
 \sum_{r=0}^{\min(a,M)}
 \gauss{a}{r}{q}\gauss{M}{r}{q}
 |\GL_r(q)|\,q^{\binom{a-r}{2}}.
\end{equation}
\end{proposition}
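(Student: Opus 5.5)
The plan follows the proof of Proposition~\ref{unitary:classes}, with transposes in place of conjugate transposes. By Section~\ref{sec:conjugacy-classes} and the proof of Theorem~\ref{orth:table}, the class of $u_J(X,Z)$ is $\mathcal C_J(X,Z+I_X)$. It therefore has $|I_X|=q^{\dim_k I_X}$ elements, where
$$
 I_X=\{HX^t-XH^t:H\in\Mat_{a\times M}(k)\}\subseteq\Alt_a(k).
$$
So we only need $\dim_kI_X$ as a function of $r=\rank X$.

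First reduce to a normal form. If $X'=gXh$ with $g\in\GL_a(k)$ and $h\in\GL_M(k)$, substitute $H=g^{-1}H'h^{-t}$. Then $H'X'^t-X'H'^t=g(HX^t-XH^t)g^t$. Since $H'\mapsto H$ is a bijection, $I_{X'}=gI_Xg^t$, and $\dim I_X$ depends only on $r$. We may therefore take
$$
 X=\begin{pmatrix}I_r&0\\0&0\end{pmatrix}.
$$
Write $H=\begin{pmatrix}A&B\\C&D\end{pmatrix}$ with $A\in\Mat_r(k)$ and $C\in\Mat_{(a-r)\times r}(k)$. A block computation gives
$$
 HX^t-XH^t=\begin{pmatrix}A-A^t&-C^t\\C&0\end{pmatrix}.
$$
Because $q$ is odd, $A-A^t$ runs over all of $\Alt_r(k)$, and $C$ is arbitrary. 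Hence
$$
 \dim I_X=\binom r2+r(a-r)=ar-\tfrac{r(r+1)}2,
$$
which gives the stated class size.

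For the class number, fix $X$ of rank $r$. The classes with first coordinate $X$ correspond to the cosets of $I_X$ in $\Alt_a(k)$. Their number is
$$
 q^{\binom a2-ar+r(r+1)/2}=q^{\binom{a-r}2};
$$
this is a short binomial identity to check. The number of $a\times M$ matrices of rank $r$ over $k$ is
$$
 \gauss{a}{r}{q}\gauss{M}{r}{q}|\GL_r(q)|,
$$
obtained by choosing the column space, the row space, and an isomorphism between them. Summing over $0\le r\le\min(a,M)$ gives~\eqref{orth:classnumber}; this is also~\eqref{eq:classcount} regrouped by rank.

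The computation is routine. The only points needing care are the sign conventions in the equivariance $I_{gXh}=gI_Xg^t$, and the use of odd $q$ to make $A\mapsto A-A^t$ surjective onto $\Alt_r(k)$.
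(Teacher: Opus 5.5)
Your proposal is the paper's proof step for step: the normal form via $I_{gXh}=gI_Xg^t$, the block computation of $HX^t-XH^t$, the count of cosets of $I_X$ in $\Alt_a(k)$, and the count of rank-$r$ matrices. There is one slip, in exactly the sign convention you flagged: with $H=g^{-1}H'h^{-t}$ (that is, $H'=gHh^{t}$) you get $H'X'^t=gH(h^t)^2X^tg^t$, so the substitution must be $H=g^{-1}H'h^{t}$, i.e.\ $H'=gHh^{-t}$ as in the paper, and with that correction the argument is complete.
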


\begin{proof}
The class has $|I_X|$ elements. If $X'=gXh$ with $g\in\GL_a(k)$ and $h\in\GL_M(k)$, put $H'=gHh^{-t}$; then
$$
 H'X'^t-X'H'^t=g(HX^t-XH^t)g^t,
$$
so $I_{X'}=gI_Xg^t$. We may therefore assume $X=\left(\begin{smallmatrix}I_r&0\\0&0\end{smallmatrix}\right)$. Writing $H=\left(\begin{smallmatrix}A&B\\C&D\end{smallmatrix}\right)$ in the same block form, we get
$$
 HX^t-XH^t=\begin{pmatrix}A-A^t&-C^t\\C&0\end{pmatrix},
 \qquad C\in\Mat_{(a-r)\times r}(k),
$$
and hence
$$
 \dim I_X=\binom r2+r(a-r)=\binom a2-\binom{a-r}{2}
           =ar-\frac{r(r+1)}2.
$$
For fixed $X$ the classes are parametrized by $\Alt_a(k)/I_X$, a space of dimension $\binom{a-r}{2}$, so there are $q^{\binom{a-r}{2}}$ classes with horizontal component $X$. Since the number of $a\times M$ matrices of rank $r$ is
$$
 \gauss{a}{r}{q}\gauss{M}{r}{q}|\GL_r(q)|,
$$
summing over $r$ gives~\eqref{orth:classnumber}.
\end{proof}

\subsection{Isomorphism classes}

\begin{lemma}\label{orth:similar}
Let $J$ and $J'$ be invertible symmetric $M\times M$ matrices with $J'=c\,h^tJh$ for some $c\in k^\times$ and $h\in\GL_M(k)$. Then
$$
 u_J(X,Z)\longmapsto u_{J'}(Xh,\,c^{-1}Z)
$$
is an isomorphism $U_{a,J}\to U_{a,J'}$.
\end{lemma}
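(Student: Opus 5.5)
The plan is to work entirely in logarithmic coordinates. By Proposition~\ref{prop:model}, the multiplication in $U_{a,J}$ is determined by the bracket $\beta_J$ through \eqref{eq:BCH}, namely $u_J(X,Z)u_J(Y,Z')=u_J(X+Y,Z+Z'+\tfrac12\beta_J(X,Y))$. So it suffices to show that the map $\phi(X,Z)=(Xh,c^{-1}Z)$ is a bijection of $V\oplus\mathcal Z$ that intertwines the two bracket forms.

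\textbf{Step 1: bijectivity.} The map $\phi$ is $k$-linear and bijective on $V\oplus\mathcal Z$, since $h$ is invertible and $c\ne0$. It also preserves $\mathcal Z=\Alt_a(k)$. Hence the induced map $U_{a,J}\to U_{a,J'}$ is a bijection of sets.

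\textbf{Step 2: the key identity.} I will verify
\[
 \beta_{J'}(Xh,Yh)=c^{-1}\beta_J(X,Y).
\]
From $J'=c\,h^tJh$ we get $J'^{-1}=c^{-1}h^{-1}J^{-1}h^{-t}$. Therefore
\[
 Yh\,J'^{-1}(Xh)^t=c^{-1}\,Yhh^{-1}J^{-1}h^{-t}h^tX^t=c^{-1}\,YJ^{-1}X^t,
\]
and the same computation applies to the second term $XJ^{-1}Y^t$.

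\textbf{Step 3: the homomorphism property.} Substitute $\phi$ into \eqref{eq:BCH} for $J'$:
\[
 u_{J'}(Xh,c^{-1}Z)\,u_{J'}(Yh,c^{-1}Z')
 =u_{J'}\bigl((X+Y)h,\;c^{-1}(Z+Z')+\tfrac12 c^{-1}\beta_J(X,Y)\bigr).
\]
The right-hand side is exactly the image under $\phi$ of $u_J\bigl(X+Y,\,Z+Z'+\tfrac12\beta_J(X,Y)\bigr)$. So $\phi$ is a group homomorphism, and together with Step 1 it is an isomorphism.

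I expect no real obstacle; the only thing to be careful about is the transpose bookkeeping in Step 2. One could also check directly that conjugation by $\operatorname{diag}(I,h^{-1},c^{-1}I)$ carries the form $\mathcal H$ for $J$ to a scalar multiple of the form for $J'$. The coordinate argument above avoids this.
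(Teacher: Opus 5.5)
Your proof is correct and is the paper's own argument: from $J'^{-1}=c^{-1}h^{-1}J^{-1}h^{-t}$ you get $\beta_{J'}(Xh,Yh)=c^{-1}\beta_J(X,Y)$, and the homomorphism property then follows from \eqref{eq:BCH}. There is one slip, in your closing aside, which your proof does not use: the conjugation that realizes this map is $x\mapsto D^{-1}xD$ with $D=\operatorname{diag}(I,h,c^{-1}I)$, for which $D^{-t}\mathcal H_{J'}D^{-1}=c\,\mathcal H_J$, whereas $\operatorname{diag}(I,h^{-1},c^{-1}I)$ scales either the corner block or the $X$-block the wrong way, whichever conjugation convention is used.
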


\begin{proof}
Since $J'^{-1}=c^{-1}h^{-1}J^{-1}h^{-t}$, we have $\beta_{J'}(Xh,Yh)=c^{-1}\beta_J(X,Y)$, and the claim follows from~\eqref{eq:BCH}.
\end{proof}

We call $J$ and $J'$ \emph{similar} if they are related as in Lemma~\ref{orth:similar}. Since $q$ is odd, nondegenerate symmetric bilinear forms over $k$ are classified up to isometry by their dimension and their discriminant \cite[Chapter~IV, \S1]{SerreArith}. If $M$ is odd, multiplying $J$ by a nonsquare changes the discriminant, so any two invertible symmetric $M\times M$ matrices are similar. If $M=2m$ is even, multiplying by a scalar does not change the discriminant class, and there are exactly two similarity classes: the split forms $J_+$, of Witt index $m$, and the non-split forms $J_-$, of Witt index $m-1$. A standard count, splitting off hyperbolic planes, shows that the number of nonzero isotropic vectors of $J_\pm$ is
\begin{equation}\label{orth:isotropic}
 n_\pm(m)=(q^m\mp1)(q^{m-1}\pm1),
 \qquad n_+(m)-n_-(m)=2q^{m-1}(q-1)\ne0.
\end{equation}

\begin{theorem}\label{orth:family}
Let $a\ge3$, and let $J$ and $J'$ be invertible symmetric $M\times M$ matrices. Then $U_{a,J}\simeq U_{a,J'}$ if and only if $J$ and $J'$ are similar. In particular, if $M=2m$ is even, then $U_{a,J_+}$ and $U_{a,J_-}$ form a Brauer pair of order $q^{2am+a(a-1)/2}$.
\end{theorem}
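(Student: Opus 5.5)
The ``if'' direction is Lemma~\ref{orth:similar}. If $M$ is odd, all invertible symmetric $M\times M$ matrices are similar, so the converse is automatic. It therefore suffices to take $M=2m$ even and show that $U_{a,J_+}\not\simeq U_{a,J_-}$ for $a\ge3$. The Brauer pair statement then follows by combining this with Theorem~\ref{orth:table}, and the order is $q^{aM+a(a-1)/2}$.

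\textbf{Step 1: intrinsic data.} First I identify group-theoretic invariants. By Proposition~\ref{orth:classcount}, $u_J(X,Z)$ has class size $q^{\dim I_X}$, where $\dim I_X=ar-r(r+1)/2$ and $r=\rank X$. For $a\ge2$ this is zero only when $X=0$, so $Z(U_{a,J})=\exp(\mathcal Z)$ and $\bar U=U/Z(U)\simeq V$ is elementary abelian. An isomorphism $\phi:U_{a,J}\to U_{a,J'}$ therefore induces an additive bijection $\bar\phi:V\to V$. Since $\phi$ preserves commutators, it also preserves the relation ``$x$ and $y$ commute'', that is, $\beta_J(x,y)=0$. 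Next, the function $r\mapsto ar-r(r+1)/2$ is strictly increasing for $0\le r\le a$. The values at $r=1$ and $r=2$ are $a-1<2a-3$ precisely when $a\ge3$, and this is where the hypothesis $a\ge3$ enters. Hence the nonzero $X$ of rank one are exactly the images in $V$ of the noncentral elements of minimal class size. So $\bar\phi$ preserves the set $\mathcal D$ of matrices of rank at most one.

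\textbf{Step 2: maximal additive subgroups in $\mathcal D$.} I claim that the maximal additive subgroups of $V$ contained in $\mathcal D$ are exactly the spaces
$$
R_e=\{ew^t:w\in k^M\}\quad\text{and}\quad C_v=\{fv^t:f\in k^a\},
$$
for $e$ and $v$ running over lines. The argument is as follows. If $ev^t$ and $fw^t$ are nonzero and their sum has rank at most one, then $e\parallel f$ or $v\parallel w$. A standard argument then shows that a subgroup of rank-$\le1$ matrices shares a common row line or a common column line. The point is that if $x,y$ share $e$ and $x,z$ share $v$ but $y,z$ share neither, then $y+z$ has rank two.

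These subgroups are permuted by $\bar\phi$, which preserves commutation, so the number of \emph{abelian} maximal subgroups, those on which $\beta_J$ vanishes identically, is an isomorphism invariant.

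\textbf{Step 3: count the abelian ones.} Using $\beta_J(ev^t,fw^t)=(v^tJ^{-1}w)(fe^t-ef^t)$, each $R_e$ is always abelian. Since $a\ge2$, the space $C_v$ is abelian if and only if $v^tJ^{-1}v=0$, that is, if and only if $v$ is isotropic for $J^{-1}$. The form $J^{-1}$ has the same Witt type as $J$. So the invariant equals
$$
\frac{q^a-1}{q-1}+\frac{n_\pm(m)}{q-1},
$$
and by \eqref{orth:isotropic} the two values differ. Hence $U_{a,J_+}\not\simeq U_{a,J_-}$.

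The main obstacle is Step 2. I must check carefully that maximal additive subgroups (not only $k$-subspaces) of $\mathcal D$ are exactly the $R_e$ and $C_v$, since $\bar\phi$ is only $\mathbb F_p$-linear. I must also check that none of these subgroups is contained in another. The latter is clear, because $R_e\cap C_v$ is the line $k\,ev^t$. If $a=M$, no dimension argument can separate row-type from column-type subgroups, but the count in Step 3 does not need to distinguish them.
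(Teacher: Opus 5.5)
Your proof is correct, but it uses a different invariant from the paper's.

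\textbf{The paper's route.} The paper never passes to $U/Z(U)$ and never classifies subgroups. For each element $x$ with $|x^U|=q^{a-1}$ it forms the set $S(x)$ of elements $y$ with the following properties:
\begin{itemize}
\item $y$ commutes with $x$;
\item $|y^U|\le q^{a-1}$;
\item $|(xy)^U|\le q^{a-1}$.
\end{itemize}
It then computes that $\pi(S(x))$ is $e\otimes k^M$ or $(e\otimes k^M)\cup(k^a\otimes v)$, according as $v$ is anisotropic or isotropic. Finally it counts the elements with the larger $|S(x)|$, which gives $q^{\binom a2}\frac{q^a-1}{q-1}n_\pm(m)$.

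\textbf{Your route.} You use the induced additive bijection $\bar\phi$ of $V$ and count the maximal additive subgroups of the rank-$\le1$ locus on which $\beta_J$ vanishes. This gives $\frac{q^a-1}{q-1}+\frac{n_\pm(m)}{q-1}$.

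The extra ingredients you need are two:
\begin{itemize}
\item the version of Lemma~\ref{orth:rankone} for additive subgroups, which does hold because that proof only uses closure under addition;
\item the pairwise incomparability of the $R_e$ and $C_v$, which uses $a\ge2$ and $M\ge2$.
\end{itemize}
Since you work with additive subgroups throughout, you correctly avoid assuming that $\bar\phi$ is $k$-linear. This is the same issue the paper handles by phrasing $S(x)$ purely in terms of class sizes.

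\textbf{Comparison.} The paper's invariant is local: it needs only the pairwise rank and commutation computation for rank-one matrices. Yours is global: it counts abelian subgroups of $U$ of the form $\exp(C_v\oplus\mathcal Z)$ and $\exp(R_e\oplus\mathcal Z)$. In effect it extends the abelian-subgroup argument of Proposition~\ref{orth:nonisomorphic} from $a=3$, $M=2$ to all $a\ge3$ and all even $M$.

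\textbf{One small inaccuracy.} The function $f(r)=ar-r(r+1)/2$ is not strictly increasing on $[0,a]$, since $f(a)=f(a-1)=a(a-1)/2$. What you actually use is $f(r)\ge f(2)>f(1)$ for $2\le r\le a$. This holds for $a\ge3$ because $f(r+1)-f(r)=a-r-1\ge0$ and $f(2)-f(1)=a-2>0$.
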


\begin{proof}
By Lemma~\ref{orth:similar} it suffices to show that $U_{a,J_+}\not\simeq U_{a,J_-}$ when $M=2m$ is even; the Brauer pair statement then follows from Theorem~\ref{orth:table}. We construct an invariant of the abstract group $U=U_{a,J}$ that depends on the Witt type of $J$.

Let $\pi:U\to V$ be the projection onto the horizontal coordinate, put $Q=J^{-1}$, and for $x\in U$ write $\operatorname{rk}(x)=\rank\pi(x)$. By Proposition~\ref{orth:classcount}, $|x^U|=q^{f(\operatorname{rk}(x))}$ with $f(r)=ar-r(r+1)/2$. For $2\le r\le a$ we have
$$
 f(r)-f(1)=\tfrac12(r-1)(2a-r-2)>0,
$$
because $a\ge3$. Hence $\operatorname{rk}(x)\le1$ if and only if $|x^U|\le q^{a-1}$, so the condition $\operatorname{rk}(x)\le1$ can be expressed in terms of the group structure alone. For $x\in U$ with $\operatorname{rk}(x)=1$ put
$$
 S(x)=\{y\in U:\ xy=yx,\ \operatorname{rk}(y)\le1,\ \operatorname{rk}(xy)\le1\}.
$$
Equivalently,
$$
 S(x)=\{y\in U:\ xy=yx,\ |y^U|\le q^{a-1},\ |(xy)^U|\le q^{a-1}\},
$$
and $\operatorname{rk}(x)=1$ if and only if $|x^U|=q^{a-1}$. Hence $|S(x)|$ is preserved by every isomorphism of abstract groups; no compatibility with the coordinates, the tensor decomposition or the $k$-linear structure is needed. This matters when $q$ is not prime.
By Proposition~\ref{prop:model}, $x$ and $y$ commute if and only if $\beta_J(\pi(x),\pi(y))=0$, and $\pi(xy)=\pi(x)+\pi(y)$. So the conditions defining $S(x)$ depend only on $\pi(y)$, and $|S(x)|=q^{\binom a2}|\pi(S(x))|$.

Write $\pi(x)=ev^t$ with nonzero column vectors $e\in k^a$ and $v\in k^M$. We claim that
$$
 \pi(S(x))=
 \begin{cases}
 e\otimes k^M,&v^tQv\ne0,\\
 (e\otimes k^M)\cup(k^a\otimes v),&v^tQv=0,
 \end{cases}
$$
where $e\otimes k^M=\{ew^t:w\in k^M\}$ and $k^a\otimes v=\{fv^t:f\in k^a\}$. Let $Y=fw^t\ne0$ be a matrix of rank one. Then
$$
 \beta_J(ev^t,fw^t)=(v^tQw)(fe^t-ef^t),
$$
so $Y$ commutes with $\pi(x)$ if and only if $f\in ke$ or $v^tQw=0$. Moreover $ev^t+fw^t$ has rank at most one if and only if $f\in ke$ or $w\in kv$. If $f\in ke$, both conditions hold. If $f\notin ke$, they hold exactly when $w=cv$ with $c\,v^tQv=0$; since $Y\ne0$, this is possible only if $v^tQv=0$, and then $Y\in k^a\otimes v$. This proves the claim. The two subspaces $e\otimes k^M$ and $k^a\otimes v$ meet in the line $k\,ev^t$, and therefore
$$
 |S(x)|=
 \begin{cases}
 q^{\binom a2}q^M,&v^tQv\ne0,\\
 q^{\binom a2}(q^M+q^a-q),&v^tQv=0.
 \end{cases}
$$
These two values are different, since $a\ge2$.

Let $\nu(U)$ be the number of elements $x\in U$ with $|x^U|=q^{a-1}$ and $|S(x)|=q^{\binom a2}(q^M+q^a-q)$. This number is preserved by group isomorphisms. By the computation above, $\nu(U)$ is $q^{\binom a2}$ times the number of rank-one matrices $ev^t$ with $v^tQv=0$. A rank-one matrix has exactly $q-1$ factorizations $ev^t$, and $Q=J^{-1}JJ^{-1}$ is congruent to $J$, so
$$
 \nu(U_{a,J})=q^{\binom a2}\,\frac{q^a-1}{q-1}\,n_\pm(m)
 \qquad\text{if $J$ is of type $\pm$.}
$$
By~\eqref{orth:isotropic} these numbers are different, so $U_{a,J_+}\not\simeq U_{a,J_-}$.
\end{proof}

\begin{corollary}\label{orth:family-parabolic}
Let $n\ge4$ and $3\le a\le n-1$, and let $U^\pm$ be the unipotent radical of the stabilizer of a totally isotropic $a$-dimensional subspace in $\SO^\pm_{2n}(q)$. Then $U^+$ and $U^-$ form a Brauer pair of order $q^{2a(n-a)+a(a-1)/2}$. If $a\le n-2$, both stabilizers are maximal parabolic subgroups; if $a=n-1$, then $U^-$ is the radical at the endpoint of type ${}^2\mathrm D_n$, and the stabilizer in $\SO^+_{2n}(q)$ is not maximal.
\end{corollary}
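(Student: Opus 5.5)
The plan is to reduce the corollary to Theorem~\ref{orth:family} and Theorem~\ref{orth:table}, by identifying $U^\pm$ with orthogonal radicals $U_{a,J_\pm}$ of the same size $M=2(n-a)$ and opposite Witt types. The maximality statements are then read off from Section~\ref{sec:model}.

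First, fix a totally isotropic $a$-space $E$ in the quadratic space $V^\pm$ of dimension $2n$ and type $\pm$. Choose a complementary totally isotropic $E^\vee$ paired with $E$, and put $W=(E\oplus E^\vee)^\perp$, so that $V^\pm=E\oplus W\oplus E^\vee$ with Gram matrix $\mathcal H$ as in Section~\ref{sec:model}. Here $\dim W=M=2(n-a)\ge2$, and $J$ is the Gram matrix of the residual form on $W\simeq E^\perp/E$. Since $E\oplus E^\vee$ is hyperbolic, Witt cancellation shows that $W$ has the same Witt type as $V^\pm$. Hence $J$ is similar to $J_+$ for $\SO^+_{2n}(q)$ and to $J_-$ for $\SO^-_{2n}(q)$; for $a=n-1$ this means $W$ is a hyperbolic plane, respectively an anisotropic plane.

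Second, the unipotent radical of the stabilizer of $E$ is the kernel of the action on the graded pieces of $0\subset E\subset E^\perp\subset V^\pm$. As explained in Section~\ref{sec:model}, this kernel is the group of block upper unitriangular isometries, so $U^\pm=U_{a,J_\pm}$. This also holds for $a=n-1$ in the split case, where the stabilizer is not maximal but Proposition~\ref{prop:model} still applies.

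Third, apply the two theorems. Since $a\ge3$ and $M$ is even, Theorem~\ref{orth:family} gives $U^+\not\simeq U^-$, and Theorem~\ref{orth:table} gives equal character tables and equal power maps. Together these say that $U^\pm$ form a Brauer pair. The order is $q^{aM+a(a-1)/2}=q^{2a(n-a)+a(a-1)/2}$, as stated in Section~\ref{sec:model}. The hypothesis $n\ge4$ is needed only so that the range $3\le a\le n-1$ is nonempty.

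Finally, the maximality claims come from Table~\ref{tab:range} and Section~\ref{sec:model}. For $a\le n-2$, removing the single node $\alpha_a$ gives a maximal rational parabolic subgroup in both $\mathrm D_n$ and ${}^2\mathrm D_n$, since $\rho$ fixes $\alpha_a$. For $a=n-1$, the orbit $\{\alpha_{n-1},\alpha_n\}$ is the endpoint of ${}^2\mathrm D_n$. In the split group, the determinant argument of Section~\ref{sec:model} shows that the stabilizer fixes both maximal totally isotropic subspaces containing $E$, so it is contained in a proper parabolic subgroup and is not maximal.

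The only delicate step is the Witt-type identification of the residual form, and this is just Witt cancellation.
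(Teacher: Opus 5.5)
Your proposal is correct and follows essentially the same route as the paper: identify $U^\pm$ with $U_{a,J_\pm}$ for $M=2(n-a)$ via Section~\ref{sec:model}, apply Theorems~\ref{orth:table} and~\ref{orth:family}, and read the maximality statements off Section~\ref{sec:model}. Your Witt-cancellation argument for the type of the residual form makes explicit a point that the paper states without justification.
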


\begin{proof}
By Section~\ref{sec:model}, $U^\pm\simeq U_{a,J_\pm}$ with $M=2(n-a)$, the residual space being split for $\SO^+_{2n}(q)$ and non-split for $\SO^-_{2n}(q)$; the statements about maximality are in Section~\ref{sec:model} as well. Now apply Theorem~\ref{orth:family}.
\end{proof}

\begin{remark}\label{orth:comparison}
The hypothesis $a\ge3$ in Theorem~\ref{orth:family} cannot be dropped. For $a=1$ the groups $U_{1,J}$ are abelian of order $q^M$. For $a=2$ the commutator is a nondegenerate alternating form on $V\simeq k^{2M}$ with values in the line $\mathcal Z$; any two such forms are equivalent, and an isometry between them, extended by the identity on $\mathcal Z$, is an isomorphism. Together with Lemma~\ref{orth:similar} and Theorem~\ref{orth:family}, this proves Theorem~\ref{thm:intro-witt}: for $a\ge3$ the groups $U_{a,J}$ form one isomorphism class if $M$ is odd and two if $M$ is even, while for $a\le2$ they are all isomorphic.
\end{remark}

\subsection{Abelian subgroups in the smallest case}
In the smallest case $a=3$, $M=2$ the two groups can also be told apart by a more familiar invariant.

\begin{lemma}\label{orth:rankone}
Let $E$ and $W$ be nonzero finite-dimensional vector spaces over a field, and let $L\subseteq E\otimes W$ be a linear subspace all of whose elements have tensor rank at most one. Then either $L\subseteq e\otimes W$ for some nonzero $e\in E$, or $L\subseteq E\otimes w$ for some nonzero $w\in W$.
\end{lemma}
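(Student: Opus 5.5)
We argue by contradiction on two elements of $L$ whose left and right factors both differ. Discard the trivial case $L=0$, where the conclusion holds for any choice of $e$. Every nonzero element of $L$ is then a pure tensor $x=e_x\otimes w_x$, and the line $ke_x\subseteq E$ and the line $kw_x\subseteq W$ are uniquely determined by $x$. We must show that either all nonzero $x\in L$ share the line $ke_x$, or all share the line $kw_x$.

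\textbf{Two elements.} Take nonzero $x=e\otimes w$ and $y=e'\otimes w'$ in $L$. Suppose $e'\notin ke$ and $w'\notin kw$. Then $e,e'$ are linearly independent, and so are $w,w'$. The sum $x+y=e\otimes w+e'\otimes w'$ has tensor rank two: viewed as a map $E^*\to W$ its image is $\operatorname{span}(w,w')$, which is two-dimensional. Since $x+y\in L$, this contradicts the hypothesis. Hence any two nonzero elements of $L$ share either their left line or their right line.

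\textbf{Many elements.} Suppose the conclusion fails. Then there are nonzero $x,y\in L$ with different left lines, so by the previous step they share a right line: $x=e\otimes w$ and $y=e'\otimes w$ with $e'\notin ke$. Likewise there is a nonzero $z\in L$ whose right line differs from $kw$, say $z=f\otimes u$ with $u\notin kw$.

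Compare $z$ with $x$ and with $y$. It cannot share the right line with either, since $u\notin kw$. So it shares the left line with both, giving $f\in ke\cap ke'$. But $e$ and $e'$ are independent, so $ke\cap ke'=0$, whereas $f\neq0$. This is a contradiction.

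Therefore either all nonzero elements of $L$ have the same left line $ke$, so $L\subseteq e\otimes W$, or they all have the same right line $kw$, so $L\subseteq E\otimes w$.

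The only step needing care is the rank-two claim in the two-element step, and the identification of a rank-one tensor with a map of one-dimensional image makes it routine. Nothing in the argument uses finiteness of the field.
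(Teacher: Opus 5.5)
Your proof is correct and follows essentially the same route as the paper: first the pairwise observation that two nonzero pure tensors in $L$ must share a left or a right line, since otherwise their sum has rank two, and then comparing a third element with two elements $e\otimes w$, $e'\otimes w$ whose left factors are independent. The paper runs the second step directly from a fixed element $e_0\otimes w_0$ instead of by contradiction, but the content is the same.
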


\begin{proof}
We may assume $L\ne0$; choose $0\ne e_0\otimes w_0\in L$. If $e\otimes w\in L$ is nonzero, then $e\in ke_0$ or $w\in kw_0$, since otherwise $e\otimes w+e_0\otimes w_0$ would have rank two. If all first factors lie in $ke_0$, then $L\subseteq e_0\otimes W$. Otherwise $L$ contains an element $e_1\otimes w_0$ with $e_0,e_1$ linearly independent. Applying the same argument to $e_0\otimes w_0$ and $e_1\otimes w_0$, we see that every second factor lies in $kw_0$, so $L\subseteq E\otimes w_0$.
\end{proof}

\begin{proposition}\label{orth:nonisomorphic}
Let $J_+$ and $J_-$ be Gram matrices of a split and of an anisotropic plane over $k$, respectively. Then the largest abelian subgroups of $U_{3,J_+}$ and $U_{3,J_-}$ have orders $q^6$ and $q^5$, respectively. Both groups have order $q^9$, with $q^6$ linear characters and $q^2(q^3-1)$ characters of degree $q^2$.
\end{proposition}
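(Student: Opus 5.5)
The order and character counts come straight from earlier results, so most of the work is the abelian-subgroup bound. With $a=3$, $M=2$ we have $|U|=q^{aM+\binom a2}=q^{6+3}=q^9$. By Theorem~\ref{thm:degrees} in types $\mathrm D^\pm$ with $M=2$:
\begin{itemize}
\item for $s=0$ there are $q^{6}S(3,0)=q^6$ linear characters;
\item for $s=1$, Lemma~\ref{lem:counts} gives $S(3,2)=\gauss{3}{2}{q}(q-1)=q^3-1$, so there are $q^{2}(q^3-1)$ characters of degree $q^2$.
\end{itemize}
Neither count depends on $J$.

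For the abelian subgroups, first reduce to a statement about $V$. Let $A\le U$ be abelian. Then $A\mathcal Z$ (identifying $\mathcal Z$ with the central subgroup $\{u_J(0,Z)\}$) is still abelian, so we may assume $A\supseteq\mathcal Z$. In that case $A=\pi^{-1}(L)$, where $L=\pi(A)$ is a $k$-subspace of $V$: it is closed under addition because $\pi(xy)=\pi(x)+\pi(y)$, and under $\BF_p$-multiples via powers. By Proposition~\ref{prop:model}, $A$ is abelian exactly when $\beta_J$ vanishes on $L\times L$. Here I would be careful about $k$-closure. If $q$ is not prime, $L$ is a priori only an additive subgroup. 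However, the condition $\beta_J(X,Y)=0$ is $k$-bilinear, so its $k$-span is still totally isotropic. Hence the maximal order is $q^{3}\cdot q^{\max\dim_k L}$, with $L$ running over $k$-subspaces of $V=k^3\otimes k^2$ on which $\beta_J$ vanishes, where $\beta_J=Q\otimes\wedge$ with $Q=J^{-1}$. The goal is therefore $\max\dim L=3$ for $J_+$ and $2$ for $J_-$.

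\emph{Lower bounds.} For $J_+$, take an isotropic vector $v$ of $Q$, which exists since $Q\simeq J$ is split. Then $L=k^3\otimes v$ has dimension $3$, and
\[
\beta_J(fv^t,gv^t)=(v^tQv)(\cdots)=0,
\]
so $L$ is totally isotropic. For both $J_\pm$, the space $L=e\otimes k^2$ has dimension $2$ and is isotropic because $fe^t-ef^t=0$ when $f\in ke$.

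\emph{Upper bound for $J_-$.} This is the main step. Suppose $L$ is totally isotropic with $\dim L=3$, and view elements of $V$ as linear maps $k^3\to k^2$ in the form $\sum_i e_i\otimes w_i$. The aim is to show that every element of $L$ has rank at most one, and then apply Lemma~\ref{orth:rankone}. Since $Q$ is anisotropic, $k^3\otimes v$ is not isotropic, so $L$ must lie in some $e\otimes k^2$, which has dimension $2$, a contradiction.

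To get rank at most one, I would compute $\beta_J(X,X')$ for $X=e_1w_1^t+e_2w_2^t$ of rank two and show that the commutant of $X$ is small. Concretely, $\beta_J(X,\cdot)$ is a linear map $V\to\Alt_3\cong k^3$. By Proposition~\ref{orth:classcount} with $r=2$, its image $I_X$ has dimension $f(2)=3$. So the centralizer of $X$ in $V$ has dimension $6-3=3$, and it consists of $X$ together with elements $Y$ satisfying some constraints. Next I would show that this $3$-dimensional centralizer is not itself totally isotropic when $Q$ is anisotropic. Normalize $X=e_1w_1^t+e_2w_2^t$ with $w_1,w_2$ a basis of $k^2$, and write $Y=\sum e_i y_i^t$. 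The condition $\beta_J(X,Y)=0$ reads
\[
\sum_{i\le 2,\;j}(w_i^tQy_j)(e_je_i^t-e_ie_j^t)=0 .
\]
This determines the centralizer explicitly; one then checks that it contains two elements $Y,Y'$ with $\beta_J(Y,Y')\propto$ (a value of $Q$ on a nonzero vector) $\neq0$. That would give $\max\dim L=2$ in the anisotropic case, and hence the orders $q^6$ and $q^5$. This explicit centralizer computation is where I expect the real difficulty, since the split case must fail at exactly this point. As a consistency check, in the split case the isotropic space $k^3\otimes v$ contains only rank-one elements, which agrees with the argument above.
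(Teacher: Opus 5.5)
Your reduction to commuting $k$-subspaces $L\subseteq V$, the two lower-bound constructions and the character counts are correct. Your strategy is the paper's: split according to whether $L$ contains a rank-two matrix, and use Lemma~\ref{orth:rankone} otherwise. The gap is that the decisive step is only asserted (``one then checks''), and your diagnosis of it is wrong. That the three-dimensional centralizer $C_V(X)$ of a rank-two $X$ is not a commuting subspace does \emph{not} depend on the Witt type, and the split case does not fail there.

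The missing idea is the paper's. Normalize $X=\left[\begin{smallmatrix}I_2\\0\end{smallmatrix}\right]$, replacing $J$ by $h^tJh$, which keeps the Witt type. Then $\beta_J(X,Y)=0$ forces $Y=\left[\begin{smallmatrix}B\\0\end{smallmatrix}\right]$ with $BQ$ symmetric. So $C_V(X)$ lies in the four-dimensional space $S$ of such matrices with arbitrary $B$. On $S$ the bracket takes values in the line $k(E_{12}-E_{21})$ and is the nondegenerate alternating form $\left[\begin{smallmatrix}0&1\\-1&0\end{smallmatrix}\right]\otimes Q$. Hence a commuting subspace containing a rank-two element has dimension at most $2$ for every invertible $J$, split or not. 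For instance, take the split form $J=\left[\begin{smallmatrix}0&1\\1&0\end{smallmatrix}\right]$. The elements $\left[\begin{smallmatrix}J\\0\end{smallmatrix}\right]$ and $\left[\begin{smallmatrix}E_{12}\\0\end{smallmatrix}\right]$ both centralize $X$, but their bracket is $\pm(E_{12}-E_{21})\neq0$. The Witt type enters only in the rank-at-most-one case, through whether $k^3\otimes w$ can be commuting. Your ``consistency check'' is in fact the whole explanation of the split/non-split difference.

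Second, you never prove the upper bound for $J_+$. To show that $q^6$ is the largest order of an abelian subgroup of $U_{3,J_+}$, you must exclude a four-dimensional commuting subspace for $J_+$. Your argument only rules out $\dim L=3$ for $J_-$. The bound follows from the same dichotomy. If $L$ contains a rank-two element, then $\dim L\le 2$ by the above, or $\dim L\le3$ already from your centralizer count via Proposition~\ref{orth:classcount}. Otherwise Lemma~\ref{orth:rankone} puts $L$ inside some $e\otimes k^2$ or $k^3\otimes w$, both of dimension at most $3$. This still has to be stated. With these two points supplied, your proof coincides with the paper's.
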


\begin{proof}
The character degrees follow from Theorem~\ref{thm:degrees} with $a=3$ and $M=2$, and $|U_{3,J}|=|V||\mathcal Z|=q^{6+3}=q^9$.

Put $Q=J^{-1}$. Since $Q=(J^{-1})^tJ(J^{-1})$, the form $Q$ has the same Witt type as $J$. Call a subspace $L\subseteq\Mat_{3\times2}(k)$ \emph{commuting} if $\beta_J(L,L)=0$. We first bound the dimension of a commuting subspace $L$.

Suppose that $L$ contains a matrix $X$ of rank two. For $g\in\GL_3(k)$ and $h\in\GL_2(k)$ we have
$$
 \beta_{h^tJh}(gXh,gYh)=g\beta_J(X,Y)g^t.
$$
Replacing $(J,L,X)$ by $(h^tJh,gLh,gXh)$ preserves the commuting condition, the dimension of $L$ and the Witt type of $J$. We may therefore assume that
$$
 X=\begin{bmatrix}I_2\\0\end{bmatrix}\in L,
$$
where $Q=J^{-1}$ now refers to the new $J$. Writing $Y=\left[\begin{smallmatrix}B\\y\end{smallmatrix}\right]$, the equation $\beta_J(X,Y)=0$ is equivalent to $y=0$ and $BQ=QB^t$. So $L$ is contained in
$$
 S=\left\{\begin{bmatrix}B\\0\end{bmatrix}:B\in\Mat_2(k)\right\},
$$
and the centralizer of $X$ in $V$ is the three-dimensional subspace of $S$ defined by the symmetry of $BQ$. Let $T=E_{12}-E_{21}\in\Alt_3(k)$. On $S$ the bracket takes values in $kT$, and $\lambda_T(T)=-1$. By Proposition~\ref{prop:rank}, the matrix of $B_T|_S$ is
$$
   \begin{bmatrix}0&1\\-1&0\end{bmatrix}\otimes Q,
$$
which is nondegenerate because $Q$ is invertible. Since $L$ is an isotropic subspace for this nondegenerate alternating form on the four-dimensional space $S$, we get $\dim L\leq2$.

Suppose now that every element of $L$ has rank at most one. By Lemma~\ref{orth:rankone}, either $L\subseteq e\otimes k^2$ for some $0\ne e\in k^3$, or $L\subseteq k^3\otimes w$ for some $0\ne w\in k^2$. In the first case $\dim L\leq2$, and $e\otimes k^2$ is indeed commuting, since
$$
 \beta_J(ev^t,ew^t)=e(w^tQv-v^tQw)e^t=0.
$$
In the second case
$$
 \beta_J(ew^t,fw^t)
   =(w^tQw)(fe^t-ef^t).
$$
If $J$, and hence $Q$, is anisotropic, then $w^tQw\ne0$, and $fe^t-ef^t=0$ only when $e$ and $f$ are dependent; so $\dim L\leq1$. If $J$ is split and $w$ is isotropic, then $k^3\otimes w$ is a commuting subspace of dimension three.

Combining the two cases, the largest commuting subspaces have dimension two for $J_-$ and three for $J_+$.

Now let $\pi:U_{3,J}\to V$ be the projection onto the horizontal coordinate. By~\eqref{eq:BCH} it is a homomorphism with kernel $\mathcal Z$, of order $q^3$. If $H\leq U_{3,J}$ is abelian, the bracket vanishes on $\pi(H)$, hence, by bilinearity, on the subspace $L$ spanned by $\pi(H)$. Therefore
$$
 |H|\le|\pi(H)|\,|\mathcal Z|\leq q^{\dim_kL+3},
$$
which gives $|H|\leq q^6$ for $J_+$ and $|H|\leq q^5$ for $J_-$. The first bound is attained by $\exp((k^3\otimes w)\oplus\mathcal Z)$ with $w$ isotropic, and the second by $\exp((e\otimes k^2)\oplus\mathcal Z)$ for any $0\ne e\in k^3$. This gives a second proof that $U_{3,J_+}\not\simeq U_{3,J_-}$.
\end{proof}

\section{Geometric interpretation in the bilinear cases}\label{geo:split-section}

Suppose $M>0$ in a bilinear case. By Proposition~\ref{prop:rank}, every nonempty base $\mathcal Y_e$ is then an exact-rank locus of symmetric or skew-symmetric matrices. The description of the bounded-rank loci by Schubert varieties, and the count of their points by parabolic $R$-polynomials, can be found in \cite[Section~3.3]{LewisEtAl}; here we fix the indexing needed for the exact-rank counts. For a non-split orthogonal radical the parameter space is still $\Alt_a(k)$, and we use the split orthogonal Grassmannian as an auxiliary model for its rank loci. When $a=1$ the skew-symmetric locus is a point, so in the type $\mathrm D$ arguments below we assume $a\ge2$.

\subsection{Rank loci in opposite Schubert cells}\label{geo:indices}

Let $E=\langle e_1,\ldots,e_a\rangle$ and $E^\vee=\langle f_1,\ldots,f_a\rangle$, and equip $E\oplus E^\vee$ with the hyperbolic symplectic or symmetric bilinear form in which $e_i$ and $f_j$ pair to $\delta_{ij}$. Let $G$ be the corresponding split symplectic or special orthogonal group; it is an auxiliary group used only to realize the rank loci, and not the ambient group of the radical. Let $B$ be the Borel subgroup stabilizing the flag defined by the ordered basis
$$
 e_1,\ldots,e_a,f_a,\ldots,f_1,
$$
let $B^-$ be the opposite Borel subgroup, and let $P$ be the stabilizer of $E$. We write $\operatorname{LG}(a,2a)$ for the Lagrangian Grassmannian of $E\oplus E^\vee$, and $\operatorname{OG}(a,2a)$ for the variety of $a$-dimensional totally isotropic subspaces of the split quadratic space $E\oplus E^\vee$, which has two connected components. Then $G/P=\operatorname{LG}(a,2a)$ in type $\mathrm C$, while in type $\mathrm D$ the quotient $G/P$ is the component of $\operatorname{OG}(a,2a)$ containing $E$.

A linear map $T:E\to E^\vee$, identified with its matrix with respect to the bases $(e_i)$ and $(f_i)$, has graph
$$
 \Gamma_T=\{x+Tx:x\in E\},\qquad
 \Gamma_T\cap E=\ker T,
$$
so that $\dim(\Gamma_T\cap E)=a-\rank T$. With respect to the basis $e_1,\ldots,e_a,f_1,\ldots,f_a$ (note that this is not the order defining $B$), the form has Gram matrix $J_\epsilon=\left[\begin{smallmatrix}0&I_a\\\epsilon I_a&0\end{smallmatrix}\right]$, where $\epsilon=-1$ in type $\mathrm C$ and $\epsilon=1$ in type $\mathrm D$. The matrix $u_T=\left[\begin{smallmatrix}I_a&0\\T&I_a\end{smallmatrix}\right]$ maps $E$ onto $\Gamma_T$, and
$$
 u_T^tJ_\epsilon u_T
 =\begin{pmatrix}T+\epsilon T^t&I_a\\\epsilon I_a&0\end{pmatrix}.
$$
The first $a$ columns of $u_T$ form a basis of $\Gamma_T$, so the restriction of the form to $\Gamma_T$ has Gram matrix $T+\epsilon T^t$. Hence $\Gamma_T$ is totally isotropic, equivalently $u_T\in G$, exactly when $T$ is symmetric (type $\mathrm C$) or alternating (type $\mathrm D$). These matrices $u_T$ form the unipotent radical $U_P^-$ of the opposite parabolic subgroup. Since $B^-=U_P^-(B^-\cap P)$, the opposite big cell $\mathcal C^-=B^-P/P=U_P^-P/P$ is identified with $\Sym_a$ in type $\mathrm C$ and with $\Alt_a$ in type $\mathrm D$.

Let $W$ be the Weyl group of $G$, with the simple reflections determined by $B$; it has type $\mathrm C_a$ or $\mathrm D_a$. The Weyl group of the Levi subgroup of $P$ is $W_P\simeq\mathfrak S_a$, and we write $W^P$ for the set of minimal length representatives of $W/W_P$. We write elements of $W$ as signed permutations, in one-line notation with respect to the order $1<\cdots<a<-a<\cdots<-1$, and set
\begin{equation}\label{geo:wr}
 w_r=(r+1,\ldots,a,-r,-r+1,\ldots,-1).
\end{equation}
In type $\mathrm C$ we allow $0\le r\le a$; in type $\mathrm D$, $r$ is even and $0\le r\le2\lfloor a/2\rfloor$. The entries of $w_r$ increase in the signed order, so $w_r\in W^P$. Exactly $a-r$ of its first $a$ entries are positive, so $\dim(w_rE\cap E)=a-r$, which matches $\dim(\Gamma_T\cap E)$ when $\rank T=r$. Since the label $-j$ corresponds to $f_j$, which sits in position $2a-j+1$ of the ordered basis, the corresponding index in the ordinary Grassmannian is
$$
 I_r^{\mathrm{Gr}}=(r+1,\ldots,a,2a-r+1,\ldots,2a).
$$
For $w\in W^P$ let $X_P(w)=\overline{BwP/P}$ be the Schubert variety. Let $\mathcal D_r^{\mathrm C}\subseteq\Sym_a$ and $\mathcal D_r^{\mathrm D}\subseteq\Alt_a$ be the reduced loci of matrices of rank at most $r$, and let
$$
 \mathcal S_r^{\mathrm C}=\{T\in\Sym_a:\rank T=r\},\qquad
 \mathcal S_r^{\mathrm D}=\{T\in\Alt_a:\rank T=r\}
$$
be the exact-rank loci. When the type is fixed we drop the superscripts.

\begin{proposition}\label{geo:rank-chart}
For every allowed $r$ there is an equality of reduced subvarieties
$$
 \mathcal D_r^{\mathrm C}=X_P(w_r)\cap\mathcal C^-,\qquad
 \mathcal D_r^{\mathrm D}=X_P(w_r)\cap\mathcal C^-.
$$
The exact-rank loci $\mathcal S_r^{\mathrm C}$ and $\mathcal S_r^{\mathrm D}$ are smooth of dimension
$$
 \frac{r(2a-r+1)}2\qquad\text{and}\qquad \frac{r(2a-r-1)}2,
$$
respectively. Their closures are the corresponding loci $\mathcal D_r$, and their boundaries are the unions of the loci of smaller allowed rank.
\end{proposition}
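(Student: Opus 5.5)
We compare two descriptions of the same subvariety of $\mathcal C^-$: one by the rank of $T$, one by the relative position of $\Gamma_T$ and $E$. The plan has three steps.

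\emph{Step 1: set-theoretic equality.} For $w\in W^P$, the $B$-orbit $BwP/P$ in $G/P$ is a union of subspaces $L$ with prescribed dimensions $\dim(L\cap F_j)$, where $F_j$ runs over the standard isotropic flag $F_1\subset\cdots\subset F_{2a}$ built from $e_1,\ldots,e_a,f_a,\ldots,f_1$. Since $F_a=E$, the closure $X_P(w_r)$ is contained in $\{L:\dim(L\cap E)\ge a-r\}$. I claim that in fact
$$
 X_P(w_r)=\{L\in G/P:\dim(L\cap E)\ge a-r\}.
$$
To prove this I will check that the right-hand side is irreducible of the correct dimension and contains $w_rP/P$.
\begin{itemize}
\item It is the closure of the single $P$-orbit $Pw_rP/P$. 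The $P$-orbits on $G/P$ are classified by $\dim(L\cap E)$; in type $\mathrm D$ this dimension has fixed parity on a component.
\item Since $P\supseteq B$, this $P$-orbit is a union of $B$-orbits. Its closure is therefore irreducible and equal to $X_P(w)$ for the $w$ of maximal length among the relevant representatives.
\item That maximal element is $w_r$. The entries of $w_r$ are maximal in the signed order subject to exactly $a-r$ positive entries among the first $a$.
\end{itemize}
Intersecting with $\mathcal C^-$ and using $\Gamma_T\cap E=\ker T$ gives $\mathcal D_r=X_P(w_r)\cap\mathcal C^-$ on geometric points.

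\emph{Step 2: reducedness.} Both sides are reduced by definition. $\mathcal D_r$ is defined as the reduced locus, and $X_P(w_r)\cap\mathcal C^-$ is an open subscheme of a Schubert variety, which is reduced. So the set-theoretic equality already gives equality of reduced subvarieties, and no ideal-theoretic comparison is needed.

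\emph{Step 3: smoothness and dimensions.} The group $\GL_a$ acts on $\Sym_a$ and on $\Alt_a$ by $T\mapsto gTg^t$. Over $\overline k$, the exact-rank locus $\mathcal S_r$ is a single orbit, so it is smooth. Its dimension can be computed by fibering over $\ker T\in\operatorname{Gr}(a-r,a)$. The fiber consists of the nondegenerate symmetric (resp. alternating) forms on $k^a/\ker T$, which is open in a space of dimension $r(r+1)/2$ (resp. $r(r-1)/2$). Hence
$$
 \dim\mathcal S_r=r(a-r)+\tfrac{r(r\pm1)}2,
$$
which equals $r(2a-r+1)/2$ in type $\mathrm C$ and $r(2a-r-1)/2$ in type $\mathrm D$, as stated.

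For the closure, rank-$r$ matrices degenerate to every smaller allowed rank, for example via diagonal or block-diagonal normal forms scaled by $t\to0$. Hence $\overline{\mathcal S_r}=\mathcal D_r$. The boundary is $\mathcal D_r\setminus\mathcal S_r$, which is the union of the $\mathcal S_{r'}$ with $r'<r$ allowed. In type $\mathrm D$ ranks are even, which explains the restriction on $r$.

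The main obstacle is Step 1: identifying $w_r$ precisely as the maximal element of the relevant double coset. In type $\mathrm D$ this also requires care with the two components of $\operatorname{OG}(a,2a)$ and the parity of $\dim(L\cap E)$. I would first verify this on the Grassmannian index $I_r^{\mathrm{Gr}}$, using the ordinary Grassmannian description $\dim(L\cap F_{a})\ge a-r$.
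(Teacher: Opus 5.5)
Your proposal is correct. Step~2 is the paper's argument. Step~1 takes a different route.

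\textbf{How the routes differ.} The paper does not prove that $X_P(w_r)$ is cut out by $\dim(L\cap E)\ge a-r$. It takes this from the literature (Lewis et al., Section~3.3, via the Grassmannian index $I_r^{\mathrm{Gr}}$). It then reads off smoothness, dimension, density and the boundary from the open set of nondegenerate forms in $\operatorname{Sym}^2\mathcal Q^\vee$, resp.\ $\bigwedge^2\mathcal Q^\vee$, over $\operatorname{Gr}(a-r,E)$.

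You instead derive the incidence description intrinsically, as the closure of the single $P$-orbit $Pw_rP/P$. This makes the statement self-contained. It also avoids describing Schubert varieties of $\operatorname{OG}(a,2a)$ by incidence conditions relative to a full isotropic flag, which is where type~$\mathrm D$ needs care. In Step~3 you get smoothness from homogeneity under $T\mapsto gTg^t$ and density from explicit degenerations, where the paper uses the bundle; both work.

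\textbf{The step that needs firming up.} Your route requires identifying the dense $B$-orbit in $Pw_rP/P$. Your reason (``the entries of $w_r$ are maximal in the signed order'') tacitly relies on a tableau-type criterion for Bruhat order on $W^P$. That criterion is standard in type~$\mathrm C$ but more delicate in type~$\mathrm D$. The clean fix is the dimension check you announce at the start of Step~1 but never carry out.
\begin{itemize}
\item We have $w_r(e_i)=e_{r+i}$ for $i\le a-r$ and $w_r(e_{a-r+j})=-e_{r-j+1}$ for $1\le j\le r$.
\item The positive roots sent to negative roots are the $e_i+e_j$ with $i<j$ and $j>a-r$, of which there are $r(a-r)+\binom r2$. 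In type~$\mathrm C$ one adds the $2e_i$ with $i>a-r$. No $e_i-e_j$ is inverted.
\item Hence $\ell(w_r)=r(2a-r+1)/2$, resp.\ $r(2a-r-1)/2$.
\item This equals $\dim\mathcal S_r=\dim Pw_rP/P$, because $\mathcal S_r=Pw_rP/P\cap\mathcal C^-$ is a nonempty open subset of this irreducible orbit.
\end{itemize}
So $Bw_rP/P$ is dense in $Pw_rP/P$, and therefore $\mathcal S_r\subseteq X_P(w_r)$.

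Combine this with your observation that $X_P(w_r)\subseteq\{L:\dim(L\cap E)\ge a-r\}$, and with $\overline{\mathcal S_r}=\mathcal D_r$ from Step~3. Together they give the equality on $\mathcal C^-$, which is all the proposition needs, with no Bruhat-order combinatorics.
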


\begin{proof}
The correspondence between rank conditions and Schubert conditions is given in \cite[Section~3.3]{LewisEtAl}, with the Grassmannian index $I_r^{\mathrm{Gr}}$; see also \cite[Chapters~6 and~7]{LR}. Under $T\mapsto\Gamma_T$, the incidence condition defining $X_P(w_r)$ becomes $\dim(\Gamma_T\cap E)\ge a-r$, that is, $\rank T\le r$. In type $\mathrm D$ we work in the component containing $E$ and with even $r$. Both sides are reduced (the right-hand side is an open subscheme of a Schubert variety), so the equality of underlying sets is an equality of subvarieties.

The exact-rank stratum is also easy to describe directly. Sending $T$ to $\ker T$ identifies it with the open subset of nondegenerate forms in the vector bundle $\operatorname{Sym}^2\mathcal Q^\vee$, respectively $\bigwedge^2\mathcal Q^\vee$, over $\operatorname{Gr}(a-r,E)$, where $\mathcal Q$ is the tautological quotient bundle. The base has dimension $r(a-r)$ and the fibers have dimension $r(r+1)/2$, respectively $r(r-1)/2$. This gives smoothness, the dimension and density, and shows that the boundary consists of the matrices of smaller allowed rank.
\end{proof}

\begin{remark}\label{geo:orthogonal-components}
In type $\mathrm D$ only the component containing $E$ is used. For $\Lambda$ in this component we have $\dim(\Lambda\cap E)\equiv a\pmod 2$. Since $\rank T=a-\dim(\Gamma_T\cap E)$, only even ranks occur, and no factor of $2$ arises from the two components. For $a=1$ the skew-symmetric chart is a point, and for $a=2$ we use $\mathrm D_2=\mathrm A_1\times\mathrm A_1$. The same chart serves for the non-split orthogonal radicals.
\end{remark}

\subsection{Equations and singular loci}
The locus $\mathcal D_r^{\mathrm C}$ is the reduced zero locus of the $(r+1)\times(r+1)$ minors, and $\mathcal D_{2s}^{\mathrm D}$ is the reduced zero locus of the Pfaffians of size $2s+2$; see \cite[Sections~6.3 and~6.4]{Weyman} for these determinantal ideals.

\begin{proposition}\label{geo:singular}
Let $0<r<a$. Then the singular locus of $\mathcal D_r^{\mathrm C}$ is, as a set, $\mathcal D_{r-1}^{\mathrm C}$. Similarly, if $0<2s<2\lfloor a/2\rfloor$, the singular locus of $\mathcal D_{2s}^{\mathrm D}$ is, as a set, $\mathcal D_{2s-2}^{\mathrm D}$. The loci of rank zero and of the largest allowed rank bound are smooth.
\end{proposition}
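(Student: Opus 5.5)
The plan is to use the group action and reduce to a computation at a single point of each rank stratum. $\GL_a$ acts on $\Sym_a$ and $\Alt_a$ by $T\mapsto gTg^t$. Over $\overline k$ the orbits are exactly the exact-rank loci $\mathcal S_r$: every symmetric matrix of rank $r$ is congruent to $\operatorname{diag}(I_r,0)$, and every alternating matrix of rank $2s$ is congruent to a standard block form. Since the action preserves each $\mathcal D_r$, the smooth locus of $\mathcal D_r$ is a union of orbits. So it suffices to decide smoothness at one representative $T_0$ of each rank $j\le r$.

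\textbf{Rank exactly $r$.} By Proposition~\ref{geo:rank-chart}, $\mathcal S_r$ is open in $\mathcal D_r$ (its complement $\mathcal D_{r-1}$, respectively $\mathcal D_{2s-2}$, is closed) and smooth. Hence $\mathcal D_r$ is smooth along $\mathcal S_r$.

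\textbf{Smaller rank.} Take $T_0$ of rank $j<r$. I will show that the Zariski tangent space of the reduced variety is too big, by bounding it from below rather than computing the ideal.
\begin{itemize}
\item First choose $T_0=\operatorname{diag}(I_j,0)$ in the symmetric case.
\item For any symmetric $H$ supported in the lower-right $(a-j)\times(a-j)$ block with $\rank H\le r-j$, the whole line $T_0+tH$ lies in $\mathcal D_r$.
\item Such $H$ of rank one include every $vv^t$ with $v$ supported in the last $a-j$ coordinates. These rank-one matrices span all symmetric matrices on that block, because $q$ is odd, or more simply because we work over $\overline k$.
\item Likewise, every tangent direction to the orbit through $T_0$ lies in $T_{T_0}\mathcal D_r$.
\item Together these span the full space $\Sym_a$: the orbit tangent space is $\{XT_0+T_0X^t\}$, which covers every block except the lower-right one, and the lines above cover the lower-right block.
\end{itemize}
Hence $\dim T_{T_0}\mathcal D_r=\dim\Sym_a>\dim\mathcal D_r$, since $r<a$ means $\mathcal D_r$ is a proper closed subset. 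Therefore $T_0$ is singular.

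The alternating case is the same. Use rank-two alternating matrices $uv^t-vu^t$ supported in the complementary block, which is allowed because $r-j\ge 2$. These span $\Alt_{a-j}$, and the tangent space again becomes all of $\Alt_a$. The locus is proper because $2s<2\lfloor a/2\rfloor$.

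\textbf{Extreme cases.}
\begin{itemize}
\item $\mathcal D_0$ is a point, so it is smooth.
\item For the largest bound, $\mathcal D_a^{\mathrm C}=\Sym_a$ and $\mathcal D^{\mathrm D}_{2\lfloor a/2\rfloor}=\Alt_a$ are affine spaces, so they are smooth.
\end{itemize}

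The main point needing care is that the tangent space of the reduced scheme contains the tangent vectors of curves lying in it. This holds because every curve $t\mapsto T_0+tH$ or orbit curve that lies set-theoretically in $\mathcal D_r$ factors through the reduced structure. So no knowledge of the ideal is required, and the reference to \cite{Weyman} is not needed for this step.
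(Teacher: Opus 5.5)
Your proof is correct, but it takes a somewhat different route from the paper's.

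\textbf{The paper's route.} The paper works at an arbitrary point of rank $\rho$. An invertible principal $\rho\times\rho$ submatrix and its Schur complement identify $\mathcal D_r$, near that point, with a smooth factor times the bounded-rank locus $\mathcal D_{r-\rho}$ in size $a-\rho$. Both halves of the statement then follow together:
\begin{itemize}
\item for $\rho=r$ the second factor is a point, so the point is smooth;
\item for $\rho<r$ the second factor is a proper cone whose tangent space at the vertex is the whole ambient space, so the point is singular.
\end{itemize}

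\textbf{Your route.} You use the transitivity of $\GL_a$ on each exact-rank stratum over $\overline k$ to reduce to a normal form $T_0$. You take smoothness along $\mathcal S_r$ from Proposition~\ref{geo:rank-chart}. At lower-rank points you bound the Zariski tangent space from below by two families of directions: the orbit directions $XT_0+T_0X^t$, and the lines $T_0+tH$ in the complementary block. Those lines are exactly the normal slice that the Schur complement identifies with the smaller cone. So the decisive fact is the same in both proofs: rank-one symmetric matrices, respectively rank-two alternating matrices, span the ambient space.

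\textbf{What each buys.} The paper's argument is self-contained and gives the stronger local product structure: transversally to its stratum, each point looks like the smaller determinantal cone. Yours avoids constructing any local isomorphism and needs only a first-order computation. The price is that it relies on the congruence normal forms over $\overline k$ and on Proposition~\ref{geo:rank-chart} for the open stratum.
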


\begin{proof}
We give a direct argument, valid in every odd characteristic. A symmetric or alternating matrix of rank $\rho$ has an invertible principal $\rho\times\rho$ submatrix, and taking the Schur complement with respect to it identifies the rank-at-most-$r$ locus, near such a point, with a smooth factor times the rank-at-most-$(r-\rho)$ locus of the same symmetry type in size $a-\rho$. For $\rho=r$ the latter is a point, so points of rank exactly $r$ are smooth. For $\rho<r$ it is a proper cone whose tangent space at the origin is the whole ambient space, since the symmetric matrices of rank one, respectively the alternating matrices of rank two, span that space; hence points of smaller rank are singular. Finally, the bounds zero and largest allowed rank give a point and the whole affine space of matrices.
\end{proof}

\begin{corollary}\label{geo:degree-singular}
Assume $M>0$ in a bilinear case. For each nonempty $\mathcal Y_e$, let $r$ be the rank of the matrices in $\mathcal Y_e$, and let $\mathcal D_r$ be the corresponding bounded-rank locus. Then $\overline{\mathcal Y_e}=\mathcal D_r$, and the boundary is the union of the nonempty strata of lower degree. If $r>0$ is smaller than the largest allowed rank, then the singular locus of $\overline{\mathcal Y_e}$ is, as a set, $\overline{\mathcal Y_{e'}}$, where $e'=e-m$ in type $\mathrm C$ (with $M=2m$) and $e'=e-M$ in the orthogonal cases. The closures of the strata of rank zero and of the largest rank are smooth.
\end{corollary}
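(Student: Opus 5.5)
The corollary is a translation of Proposition~\ref{geo:rank-chart} and Proposition~\ref{geo:singular} into the language of degrees, and I would prove it by making that dictionary explicit. I take Proposition~\ref{prop:rank} as the starting point. In the bilinear cases $\rank_kB_T=M\rank_kT$, so the degree parameter is determined by the rank of $T$: $e=mr$ in type $\mathrm C$ with $M=2m$, and $e=Ms$ with $r=2s$ in the orthogonal cases. Since $M>0$, the map $r\mapsto e$ is injective and strictly increasing on the allowed ranks. Hence each nonempty $\mathcal Y_e$ is, as a reduced locally closed subvariety of $\mathcal Z^*$, exactly the exact-rank locus $\mathcal S_r$ for the unique allowed $r$ with $e=mr$ or $e=Mr/2$.

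The reducedness point needs one check. By definition, $\mathcal Y_{\le e}$ is the reduced locus where $\rank B_\lambda\le 2e$, and on geometric points this is the locus where $\rank T\le r$. So $\mathcal Y_{\le e}=\mathcal D_r$ as reduced subvarieties, and $\mathcal Y_e=\mathcal D_r\setminus\mathcal D_{r'}$, where $r'$ is the previous allowed rank. This equals $\mathcal S_r$.

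With this identification, Proposition~\ref{geo:rank-chart} gives the first two claims. The closure of $\mathcal S_r$ is $\mathcal D_r$, and its boundary is the union of the loci of smaller allowed rank. Those loci are exactly the nonempty $\mathcal Y_{e''}$ with $e''<e$, again by monotonicity of $r\mapsto e$.

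For the singular locus, I would take $0<r$ below the largest allowed rank and apply Proposition~\ref{geo:singular}. It says that $\operatorname{Sing}\mathcal D_r$ is, as a set, the bounded-rank locus of the previous allowed rank. That rank is $r-1$ in type $\mathrm C$ and $r-2$ in type $\mathrm D$. By the dictionary this locus equals $\overline{\mathcal Y_{e'}}$ with $e'=m(r-1)=e-m$ in type $\mathrm C$, and $e'=M(s-1)=e-M$ in the orthogonal cases. The final sentence about rank zero and maximal rank is the last clause of Proposition~\ref{geo:singular}.

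The only real subtlety is the non-split orthogonal case. There the parameter space is still $\Alt_a(k)$, and Remark~\ref{geo:orthogonal-components} notes that the same chart applies, so nothing changes. The statements are independent of $J$, because they concern $\mathcal Z^*$ alone.
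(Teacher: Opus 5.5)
Your proposal is correct and matches the paper's proof, which applies Propositions~\ref{geo:rank-chart} and~\ref{geo:singular} to $\mathcal Y_e=\mathcal S_r$ with $e=mr$ in type $\mathrm C$, respectively $e=Mr/2$ in the orthogonal cases. Your extra check that $\mathcal Y_{\le e}=\mathcal D_r$ as reduced subvarieties, using $\rank B_T=M\rank T$ on geometric points, spells out an identification the paper uses without comment.
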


\begin{proof}
Apply Propositions~\ref{geo:rank-chart} and~\ref{geo:singular} to $\mathcal Y_e=\mathcal S_r$. We have $e=mr$ in type $\mathrm C$ and $e=Mr/2$ in the orthogonal cases, so lowering the rank by one, respectively two, gives the stated values of $e'$.
\end{proof}

\subsection{\texorpdfstring{$R$-polynomials}{R-polynomials} and positivity}
\label{geo:split-count}

\begin{lemma}\label{geo:projection}
Let $G$ be a split connected reductive group over $k$, let $B\subset P$ be a standard parabolic subgroup, and let $B^-$ be the opposite Borel subgroup. For $y,z\in W^P$, the intersection $ByP/P\cap B^-zP/P$ has a finite partition into locally closed subvarieties isomorphic to
$$
 ByB/B\cap B^-vB/B,
 \qquad v\in zW_P,\quad v\le y.
$$
All maps involved are defined over $k$.
\end{lemma}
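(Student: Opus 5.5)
The plan is to use the projection $\pi:G/B\to G/P$ and to pull the intersection in $G/P$ back to $G/B$. The standard facts needed are these. For $y\in W^P$, $\pi$ maps $ByB/B$ isomorphically onto $ByP/P$. This holds because $ByP=ByB\cdot P$ and $B\cap yPy^{-1}$ contains $B\cap yBy^{-1}$. For minimal $y$, the unipotent part $U_y=\prod_{\alpha>0,\,y^{-1}\alpha<0}U_\alpha$ acts simply transitively on both $ByB/B$ and $ByP/P$. On the opposite side, $\pi^{-1}(B^-zP/P)=B^-zPB/B=\bigsqcup_{v\in zW_P}B^-vB/B$, since $P=\bigsqcup_{u\in W_P}BuB$ and $B^-zBuB=\bigsqcup$ of $B^-$-orbits $B^-vB$ with $v\in zW_P$. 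The second fact is the only place where an argument is needed. We have $zBuB\subseteq\bigcup_{u'\le u} zB u'B$, and $B^-zB u'B\subseteq B^-zu'B\cup\cdots$. It is cleaner to argue that $B^-zP$ is a union of $(B^-,B)$-double cosets $B^-vB$. Those meeting it have $v\in zW_P$, because the $T$-fixed points of $B^-zP/B$ are $vB$ with $vP=zP$. Conversely each $vB$ with $v\in zW_P$ lies in $zP/B$.

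Given these facts, I restrict $\pi$ to the section $ByB/B\xrightarrow{\sim}ByP/P$. Under this isomorphism, $ByP/P\cap B^-zP/P$ corresponds to $ByB/B\cap\pi^{-1}(B^-zP/P)=\bigsqcup_{v\in zW_P}\bigl(ByB/B\cap B^-vB/B\bigr)$. The pieces are locally closed in $ByB/B$ because each $B^-vB/B$ is locally closed in $G/B$. There are finitely many pieces, and the decomposition is a set-theoretic partition by the disjointness of $B^-$-orbits. Each piece is nonempty only if $v\le y$, by the standard criterion that $ByB\cap B^-vB\neq\varnothing$ iff $v\le y$. So we may restrict to $v\in zW_P$ with $v\le y$, where empty pieces are simply dropped or kept harmlessly. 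The isomorphism $ByB/B\to ByP/P$ then carries the pieces onto locally closed subvarieties of the intersection, isomorphic to $ByB/B\cap B^-vB/B$.

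Rationality is straightforward. $G$ is split, $B,B^-,P$ and the representatives $\dot y,\dot v\in N(T)(k)$ are defined over $k$, so $\pi$, the orbits, and the isomorphism $U_y\to ByB/B\to ByP/P$ are all defined over $k$. The locally closed pieces are reduced $k$-subvarieties.

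The main obstacle is that the Bruhat cell $ByB/B$ is not $\pi^{-1}(ByP/P)$. That preimage is $\bigsqcup_{u\in W_P}ByuB/B$. So I must check carefully that intersecting only the minimal cell with the preimage of the opposite $P$-orbit already gives all of $ByP/P\cap B^-zP/P$, i.e.\ that $\pi|_{ByB/B}$ is bijective onto $ByP/P$. I would verify this via $U_y$ acting simply transitively on both sides, using minimality of $y$, which gives $U_y\cap yPy^{-1}=1$.
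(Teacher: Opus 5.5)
Your proposal is correct and follows the paper's proof. Both arguments restrict $\pi$ to the isomorphism $ByB/B\to ByP/P$ for $y\in W^P$, use $\pi^{-1}(B^-zP/P)=\coprod_{v\in zW_P}B^-vB/B$, intersect the two, drop the empty pieces via $ByB\cap B^-vB\ne\varnothing\iff v\le y$, and deduce rationality from splitness. One small correction: the inclusion $B\cap yBy^{-1}\subseteq B\cap yPy^{-1}$ alone only gives surjectivity of $ByB/B\to ByP/P$, and injectivity needs the equality of these two groups, which your later observation $U_y\cap yPy^{-1}=1$ (from the minimality of $y$) supplies.
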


\begin{proof}
Since $y$ is the element of minimal length in its coset, the projection $\pi:G/B\to G/P$ restricts to an isomorphism
$$
 ByB/B\longrightarrow ByP/P.
$$
Moreover,
$$
 \pi^{-1}(B^-zP/P)=\coprod_{v\in zW_P}B^-vB/B.
$$
Intersecting with $ByB/B$ gives the partition, and on each piece the isomorphism is the restriction of $\pi$. The piece indexed by $v$ is empty unless $v\le y$. Since $G$ is split and $B$, $B^-$ and $P$ are defined over $k$, so are these cells, their intersections and the maps between them.
\end{proof}

We normalize the $R$-polynomials so that they count points on open Richardson varieties \cite{Deodhar}:
$$
 R_{v,y}(q)=
 \bigl|(ByB/B\cap B^-vB/B)(k)\bigr|,
 \qquad R_{v,v}(q)=1;
$$
thus, for instance, $R_{1,s}(q)=q-1$. (Other normalizations in the literature differ from this one by a sign and a power of $q$.) Let $\nu=1$ in type $\mathrm C$ and $\nu=2$ in type $\mathrm D$ (where $r$ is even), and define
$$
 \mathcal I_r=
 \{y\in W^P:y\le w_r,\ y\not\le w_{r-\nu}\},
$$
where for $r=0$ the second condition is omitted.

\begin{proposition}\label{geo:r-sum}
For the symmetric and skew-symmetric exact-rank loci,
\begin{equation}\label{geo:split-r-sum}
 |\mathcal S_r(k)|=
 \sum_{y\in\mathcal I_r}\ \sum_{\substack{v\in W_P\\v\le y}}
 R_{v,y}(q).
\end{equation}
\end{proposition}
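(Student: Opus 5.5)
We combine Proposition~\ref{geo:rank-chart} with the Bruhat decomposition of $G/P$ and then apply Lemma~\ref{geo:projection}. By Proposition~\ref{geo:rank-chart}, $\mathcal S_r=\mathcal D_r\setminus\mathcal D_{r-\nu}=(X_P(w_r)\setminus X_P(w_{r-\nu}))\cap\mathcal C^-$ (for $r=0$ simply $X_P(w_0)\cap\mathcal C^-$). Schubert varieties in $G/P$ are unions of $B$-orbits: $X_P(w)=\bigsqcup_{y\in W^P,\,y\le w}ByP/P$. Hence
$$
 X_P(w_r)\setminus X_P(w_{r-\nu})=\bigsqcup_{y\in\mathcal I_r}ByP/P,
$$
a partition into locally closed pieces defined over $k$, and therefore
$$
 \mathcal S_r=\bigsqcup_{y\in\mathcal I_r}\bigl(ByP/P\cap\mathcal C^-\bigr).
$$
Here we use that the Bruhat order on $W^P$ governs the closure relations in $G/P$. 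The only point to check is that $y\le w_r$ taken in $W$ and in $W^P$ agree, which is standard.

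Next, $\mathcal C^-=B^-P/P=B^-\cdot1\cdot P/P$ is the opposite cell indexed by $z=1\in W^P$. Applying Lemma~\ref{geo:projection} with $z=1$, each $ByP/P\cap B^-P/P$ is partitioned into the open Richardson varieties $ByB/B\cap B^-vB/B$ with $v\in W_P$ and $v\le y$, all defined over $k$. Counting $k$-points and using the normalization $R_{v,y}(q)=|(ByB/B\cap B^-vB/B)(k)|$ gives \eqref{geo:split-r-sum}. Counting points over a disjoint union of locally closed $k$-subvarieties is additive, so no further care is needed.

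The main obstacle is the first step: identifying $\mathcal D_r$ with $X_P(w_r)\cap\mathcal C^-$, including the identification of the chart $T\mapsto\Gamma_T$ with $B^-P/P$ under the chosen basis ordering. This is supplied by Proposition~\ref{geo:rank-chart}. In type $\mathrm D$ we must also make sure that the $B$-orbits contained in $X_P(w_r)$ but not in $X_P(w_{r-2})$ are exactly those meeting the chart in rank exactly $r$. This follows because on the component containing $E$ only even ranks occur (Remark~\ref{geo:orthogonal-components}), so $\mathcal D_{r-1}=\mathcal D_{r-2}$. The degenerate small cases $a=1,2$ are handled as in that remark.
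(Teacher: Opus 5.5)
Your proposal is correct and follows the paper's proof essentially step for step. Proposition~\ref{geo:rank-chart} writes $\mathcal S_r$ as $\bigl(X_P(w_r)\setminus X_P(w_{r-\nu})\bigr)\cap\mathcal C^-$, the Bruhat cells in this difference are exactly those indexed by $\mathcal I_r$, and Lemma~\ref{geo:projection} with $z=1$ partitions each $ByP/P\cap\mathcal C^-$ into the open Richardson pieces counted by $R_{v,y}(q)$. Your remark that $\mathcal D_{r-1}=\mathcal D_{r-2}$ for alternating matrices, which justifies taking $\nu=2$ in type $\mathrm D$, is a harmless clarification that the paper leaves implicit.
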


\begin{proof}
By Proposition~\ref{geo:rank-chart},
$$
 \mathcal S_r=\bigl(X_P(w_r)\setminus X_P(w_{r-\nu})\bigr)\cap\mathcal C^-,
$$
where the second Schubert variety is omitted for $r=0$. The Bruhat cells contained in this difference are exactly the cells $ByP/P$ with $y\in\mathcal I_r$. Apply Lemma~\ref{geo:projection} with $z=1$, noting that $\mathcal C^-=B^-P/P$: the $k$-points of $ByP/P\cap\mathcal C^-$ are partitioned by the $k$-points of
$$
 ByB/B\cap B^-vB/B,\qquad v\in W_P,\quad v\le y.
$$
Summing over $y\in\mathcal I_r$ gives~\eqref{geo:split-r-sum}.
\end{proof}

To read off the coefficients, fix a reduced expression $y=s_1\cdots s_\ell$. A \emph{distinguished subexpression} is a sequence $\gamma_0=1,\gamma_1,\ldots,\gamma_\ell$ such that
$$
 \gamma_i\in\{\gamma_{i-1},\gamma_{i-1}s_i\},\qquad\text{and}\qquad
 \gamma_i=\gamma_{i-1}s_i\ \text{ whenever }\ \gamma_{i-1}s_i<\gamma_{i-1}.
$$
Put $c(\gamma)=|\{i:\gamma_i=\gamma_{i-1}\}|$ and $b(\gamma)=|\{i:\gamma_{i-1}s_i<\gamma_{i-1}\}|$. By Deodhar's theorem \cite[Theorem~1.1]{Deodhar}, the intersection indexed by $(v,y)$ is partitioned into pieces isomorphic to $\mathbb A_k^{b(\gamma)}\times(\mathbb G_{m,k})^{c(\gamma)}$, one for each distinguished subexpression $\gamma$ with $\gamma_\ell=v$. Since $G$ is split, the root subgroup coordinates defining these pieces are defined over $k$, and therefore
$$
 R_{v,y}(q)=\sum_{\gamma_\ell=v}
 q^{b(\gamma)}(q-1)^{c(\gamma)}.
$$
Let $P_r(q)$ denote the right-hand side of~\eqref{geo:split-r-sum}. With $t=q-1$ we obtain
\begin{equation}\label{geo:split-coefficient}
 [t^j]\,P_r(1+t)
 =\sum_{y\in\mathcal I_r}\
   \sum_{\substack{\gamma\text{ distinguished}\\\gamma_\ell\in W_P}}
   \binom{b(\gamma)}{j-c(\gamma)},
\end{equation}
where binomial coefficients outside the usual range are zero. In particular $P_r(q)\in\BN[q-1]$.

Now return to the radicals. For the symmetric locus of rank $r$ put $(e,h)=(mr,2m(a-r))$, where $M=2m$; for the skew-symmetric locus of rank $r$ put $(e,h)=(Mr/2,M(a-r))$. In both cases $h=aM-2e=\dim R_T$ is the rank of the bundle in Proposition~\ref{prop:parameter-varieties}, and the trace pairing identifies $\mathcal S_r$ with $\mathcal Y_e$. Counting points in the fibers gives $N_{U,e}(q)=q^hP_r(q)$. The reduced expressions and the distinguished subexpressions depend only on the Weyl group, so the index sets and the integers $b(\gamma)$, $c(\gamma)$ do not depend on $q$; the Deodhar pieces thus provide the partitions required in Section~\ref{sec:parameter-varieties}. The coefficients of $N_{U,e}(q)$ in powers of $q-1$ are given by~\eqref{geo:split-coefficient} with $b(\gamma)$ replaced by $b(\gamma)+h$.

\section{Geometric interpretation in the unitary case}\label{geo:unitary-section}

Let $\widetilde G=\GL_{2a}$ over $\overline{k}$; as in Section~\ref{geo:split-section}, this is an auxiliary group, depending only on the size $a$ of the central parameter and not on the ambient unitary group. In the basis $e_1,\ldots,e_a,f_1,\ldots,f_a$ of $E\oplus E^\vee$, let $J_0=\left[\begin{smallmatrix}0&I_a\\I_a&0\end{smallmatrix}\right]$ be the Gram matrix of a hyperbolic Hermitian form, and define
$$
 F(g)=J_0^{-1}\bigl(g^{(q)t}\bigr)^{-1}J_0,
$$
so that $\widetilde G^F\simeq\Ugrp_{2a}(q)$. Let $P$ be the stabilizer of $E$, with block sizes $(a,a)$, and let $B\subset P$ and $B^-$ be the Borel subgroups of upper and lower triangular matrices with respect to the ordered basis of Section~\ref{geo:split-section}. In that basis the Gram matrix is antidiagonal, so $B$ and $B^-$ are $F$-stable. We have $\widetilde G/P\simeq\operatorname{Gr}(a,2a)$, and on this Grassmannian $F$ sends a subspace to its orthogonal complement with respect to the $q$-sesquilinear form $(x,y)\mapsto x^{(q)t}J_0y$. For $T\in\Mat_a(\overline{k})$, viewed as a map $E\otimes_k\overline{k}\to E^\vee\otimes_k\overline{k}$, the graph $\Gamma_T=\{u+Tu:u\in E\otimes_k\overline{k}\}$ lies in the $F$-stable opposite cell $\mathcal C^-$, and
$$
 F(\Gamma_T)=\Gamma_T^{\perp}=\Gamma_{-T^{(q)t}}.
$$
The $q$-semilinear map $T\mapsto-T^{(q)t}$ therefore defines a $k$-form $\mathcal X$ of the affine space of $a\times a$ matrices, with
$$
 \mathcal X(k)=
 \{T\in\Mat_a(\overline{k}):T=-T^{(q)t}\}
   =\SHerm_a(K),
 \qquad
 \mathcal X\otimes_k\overline{k}\simeq\Mat_a(\overline{k}).
$$
Indeed, $T=-T^{(q)t}$ implies $T=T^{(q^2)}$, so the entries of $T$ lie in $K$.

\subsection{The unitary rank loci}\label{geo:unitary-descent}

Let $\widetilde W=\mathfrak S_{2a}$ be the absolute Weyl group. For $0\leq r\leq a$ put
$$
 \widetilde w_r=
 (r+1,\ldots,a,2a-r+1,\ldots,2a,
  1,\ldots,r,a+1,\ldots,2a-r)
$$
in one-line notation, and let $\mathcal F_d$ be the span of the first $d$ vectors of the ordered basis defining $B$. The first $a$ entries of $\widetilde w_r$ form the index $I_r^{\mathrm{Gr}}=(i_1,\ldots,i_a)$ of Section~\ref{geo:split-section}, so $\Lambda\in X_P(\widetilde w_r)$ if and only if $\dim(\Lambda\cap\mathcal F_{i_j})\geq j$ for $1\leq j\leq a$. We claim that
$$
 X_P(\widetilde w_r)
 =\{\Lambda\in\operatorname{Gr}(a,2a):\dim(\Lambda\cap E)\geq a-r\}.
$$
For $j\leq a-r$ we have $i_j=r+j$ and $\mathcal F_{r+j}\subseteq E=\mathcal F_a$, so $\dim(\Lambda\cap E)\geq a-r$ implies
$$
 \dim(\Lambda\cap\mathcal F_{r+j})
 \geq(a-r)+(r+j)-a=j.
$$
For $j=a-r+s$ with $1\leq s\leq r$ we have $i_j=2a-r+s$, and $\dim(\Lambda\cap\mathcal F_{i_j})\geq a+i_j-2a=j$ holds for every $\Lambda$. Conversely, when $r<a$ the condition for $j=a-r$ is exactly $\dim(\Lambda\cap E)\geq a-r$, and when $r=a$ all conditions are automatic. This proves the claim.

The diagram involution $\sigma(s_i)=s_{2a-i}$ fixes $\widetilde w_r$, so this Schubert variety is $F$-stable, and on $\mathcal C^-$ it is given by $\rank T\leq r$. The determinantal condition $\rank T\le r$ is preserved by $T\mapsto-T^{(q)t}$, so it descends to a reduced closed subvariety $\mathcal X_{\leq r}\subseteq\mathcal X$. Put
$$
 \mathcal X_r=\mathcal X_{\leq r}\setminus\mathcal X_{\leq r-1},
 \qquad \mathcal X_{\leq-1}=\varnothing.
$$
Then
$$
 \mathcal X_r(k)=\{T\in\SHerm_a(K):\rank_KT=r\},
$$
while $\mathcal X_{\leq r}\otimes_k\overline{k}$ is the ordinary variety of matrices of rank at most $r$. Writing $E_{\overline{k}}=E\otimes_k\overline{k}$, a map $E_{\overline{k}}\to E^\vee_{\overline{k}}$ of rank $r$ has a kernel $H\in\operatorname{Gr}(a-r,E_{\overline{k}})$ and induces an injection $E_{\overline{k}}/H\to E^\vee_{\overline{k}}$. These injections form an open subset of a vector bundle over $\operatorname{Gr}(a-r,E_{\overline{k}})$, so the exact-rank locus is smooth of dimension $r(a-r)+ar=r(2a-r)$ and dense in the locus of rank at most $r$. By descent, the same holds for $\mathcal X_r$.

Now consider a unitary radical with residual dimension $M>0$. By Proposition~\ref{prop:rank}, $\rank_kB_T=2M\rank_KT$ for $T\in\SHerm_a(K)$. To compare the geometric strata, choose an isomorphism $K\otimes_k\overline{k}\simeq\overline{k}\times\overline{k}$. The involution of $K$ interchanges the two factors, so a geometric skew-Hermitian parameter has the form $(A,-A^t)$. Writing $X=(X_1,X_2)$ accordingly, the radical $R_T=\{X:TX=0\}$ is given by $AX_1=0$ and $A^tX_2=0$. If $\rank A=r$, each kernel has dimension $M(a-r)$, and the commutator form over $\overline{k}$ has rank $2Mr$. Thus $\mathcal X_r$ and $\mathcal Y_{Mr}$ are defined by the same geometric rank condition; since both are reduced, the trace pairing identifies them. By Proposition~\ref{prop:parameter-varieties}, the bundle $\mathcal R_{Mr}$ and its dual have rank $d-2Mr=2M(a-r)$ over $\mathcal X_r$.

The closure of $\mathcal Y_{Mr}$ is $\mathcal X_{\leq r}$, whose base change to $\overline k$ is an ordinary determinantal variety. The Schur complement argument in the proof of Proposition~\ref{geo:singular}, applied to arbitrary matrices, shows that its singular locus is $\mathcal X_{\leq r-1}$ when $0<r<a$, while the closures of rank zero and of full rank are smooth. These properties descend to $k$, since $k$ is perfect.

To count the $F$-fixed points of these rank loci we pass to the relative Weyl group. The Frobenius acts on the simple reflections by $s_i\mapsto s_{2a-i}$, and the fixed subgroup is a Weyl group of type $\mathrm C_a$ with generators
$$
 t_i=s_i s_{2a-i}\quad(1\le i<a),\qquad t_a=s_a.
$$
We write $\iota:W(\mathrm C_a)\hookrightarrow\widetilde W$ for this embedding. For $i<a$ the two reflections in $t_i$ commute and form one $\sigma$-orbit, while the middle reflection $s_a$ is fixed. Accordingly we use the weight function
$$
 L(t_i)=2\quad(i<a),\qquad L(t_a)=1,
$$
which corresponds to the parameters $q^2$ and $q$ of the BN-pair of $\widetilde G^F\simeq\Ugrp_{2a}(q)$. For a reduced expression $w=t_{i_1}\cdots t_{i_\ell}$ put $\ell_L(w)=\sum_jL(t_{i_j})$; this does not depend on the expression. Let $W_{P,F}=\langle t_1,\ldots,t_{a-1}\rangle\simeq\mathfrak S_a$ be the relative Weyl group of the Levi subgroup, which we identify with $\mathfrak S_a$, and write $W(\mathrm C_a)^{\mathfrak S_a}$ for the set of minimal length representatives of $W(\mathrm C_a)/\mathfrak S_a$. For the element $w_r$ of type $\mathrm C$ defined in~\eqref{geo:wr} we have $\iota(w_r)=\widetilde w_r$ and
$$
 \ell_L(w_r)=\ell_{\widetilde W}(\widetilde w_r)=r(2a-r).
$$
The first equality holds because a reduced word in the $t_i$ expands to a reduced word in $\widetilde W$ in which $t_i$ contributes $L(t_i)$ simple reflections \cite[Lemma~5.3]{Hultman}. For the second, count inversions of $\widetilde w_r$: each of the entries $r+1,\ldots,a$ exceeds the $r$ entries $1,\ldots,r$ to its right, giving $r(a-r)$ inversions, and each of the entries $2a-r+1,\ldots,2a$ exceeds all $a$ entries to its right, giving $ar$ more.

\begin{lemma}\label{geo:folding}
Let $\sigma(s_i)=s_{2a-i}$ act on $\widetilde W$, and let $\widetilde W_P=\mathfrak S_a\times\mathfrak S_a$. Write $\widetilde W^{\widetilde W_P}$ for the set of minimal length representatives of $\widetilde W/\widetilde W_P$, and for a $\sigma$-stable subset $A\subseteq\widetilde W$ put $A^\sigma=\{w\in A:\sigma(w)=w\}$. Then $\iota$ maps $W(\mathrm C_a)$ onto $\widetilde W^\sigma$, and
$$
 u\le v\ \text{in }W(\mathrm C_a)
 \quad\Longleftrightarrow\quad
 \iota(u)\le\iota(v)\ \text{in }\mathfrak S_{2a}.
$$
Furthermore,
$$
 (\widetilde W_P)^\sigma=\iota(\mathfrak S_a),\qquad
 (\widetilde W^{\widetilde W_P})^\sigma
   =\iota\bigl(W(\mathrm C_a)^{\mathfrak S_a}\bigr).
$$
\end{lemma}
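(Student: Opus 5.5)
The plan is to realize everything through the explicit model of $\widetilde W=\mathfrak S_{2a}$ as permutations of $\{1,\dots,2a\}$ and of $W(\mathrm C_a)$ as signed permutations. Identify the label $j\in\{1,\dots,a\}$ with position $j$ and the label $-j$ with position $2a+1-j$; this is the bookkeeping already used for $I_r^{\mathrm{Gr}}$ in Section~\ref{geo:indices}. Let $w_0$ be the longest element of $\mathfrak S_{2a}$, the involution $i\mapsto 2a+1-i$. Then $\sigma(w)=w_0ww_0$, since conjugation by $w_0$ sends $s_i$ to $s_{2a-i}$. Hence $\widetilde W^\sigma$ is the centralizer of $w_0$. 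This centralizer consists exactly of the permutations commuting with $i\mapsto 2a+1-i$, which are the signed permutations of $\{\pm1,\dots,\pm a\}$ under the above identification.

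One checks on generators that $\iota(t_i)=s_is_{2a-i}$ for $i<a$ and $\iota(t_a)=s_a$ are the standard generators of the hyperoctahedral group in this realization. This gives injectivity and surjectivity of $\iota$ onto $\widetilde W^\sigma$ at once. The paper's one-line convention for $w_r$ in \eqref{geo:wr} matches this, and it is consistent with $\iota(w_r)=\widetilde w_r$ as stated above.

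For the Bruhat order, I would use the tableau (rank-matrix) criterion in $\mathfrak S_{2a}$. This criterion says $u\le v$ if and only if $|\{i\le p:u(i)\ge q\}|\le|\{i\le p:v(i)\ge q\}|$ for all $p,q$. For type $\mathrm B/\mathrm C$, Björner--Brenti (Cor.~8.1.9) states that the Bruhat order on $W(\mathrm C_a)$ is the restriction of the Bruhat order on $\mathfrak S_{2a}$ under exactly this embedding. Alternatively one can cite the general folding result of Hultman \cite{Hultman}, which is already invoked for lengths: for a diagram automorphism $\sigma$, the fixed subgroup $\widetilde W^\sigma$ with generators the longest elements of the $\sigma$-orbit parabolics carries the induced Bruhat order. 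The main obstacle is making this step self-contained. If a direct argument is wanted, the implication ``$u\le v\Rightarrow\iota(u)\le\iota(v)$'' follows from subword expansion, because a reduced $t$-word expands to a reduced $s$-word by the length statement. The converse is the real content, and I would cite it rather than reprove it.

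For the parabolic statements, $(\widetilde W_P)^\sigma$ consists of the signed permutations preserving the block $\{1,\dots,a\}$. These are exactly the unsigned permutations, namely $\iota(\langle t_1,\dots,t_{a-1}\rangle)=\iota(\mathfrak S_a)$; note that $\sigma$ swaps the two $\mathfrak S_a$ factors, so the fixed points form the diagonal. For minimal coset representatives, $w\in\widetilde W^{\widetilde W_P}$ if and only if $w(1)<\dots<w(a)$ and $w(a+1)<\dots<w(2a)$. For a $\sigma$-fixed $w$, the second chain is the image of the first under $i\mapsto 2a+1-i$, so the condition reduces to increasing first $a$ entries in the order $1<\dots<a<-a<\dots<-1$. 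This is precisely the criterion for $W(\mathrm C_a)^{\mathfrak S_a}$ used in \eqref{geo:wr}. Equivalently, $W(\mathrm C_a)^{\mathfrak S_a}$ consists of the $x$ with $\ell(xt_i)>\ell(x)$ for all $i<a$. Since $\ell_L$ is the length in $\widetilde W$, this matches $\ell(\iota(x)s_i)>\ell(\iota(x))$ for $i\ne a$, using $\sigma$-symmetry. This gives both inclusions.
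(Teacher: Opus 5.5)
Your proposal is correct, and its final descent-pairing step is the paper's argument. The rest is obtained differently.

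The paper takes three facts from Hultman's folding theorem for diagram automorphisms: that $\iota$ is onto $\widetilde W^\sigma$, that the Bruhat orders are compatible, and that $(\widetilde W_P)^\sigma=\iota(\mathfrak S_a)$. It then observes that for $x=\iota(u)$ the commuting reflections $s_i,s_{2a-i}$ are simultaneously descents or ascents of $x$.

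You instead use that in type $\mathrm A$ the diagram automorphism is inner, $\sigma(w)=w_0ww_0$. Hence $\widetilde W^\sigma$ is the centralizer of $i\mapsto 2a+1-i$, which is the hyperoctahedral group. Several claims then become elementary checks on signed permutations:
\begin{itemize}
\item surjectivity of $\iota$, with injectivity following by comparing orders $2^aa!$;
\item the description of $(\widetilde W_P)^\sigma$ as the signed permutations preserving $\{1,\dots,a\}$;
\item the characterization of $(\widetilde W^{\widetilde W_P})^\sigma$ by increasing first $a$ entries.
\end{itemize}
This gives a self-contained treatment of everything except the Bruhat comparison. It also makes explicit the one-line criterion (entries increasing in the order $1<\dots<a<-a<\dots<-1$) that the paper invokes for $w_r\in W^P$.

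For the Bruhat comparison both routes rely on a citation. Your Bj\"orner--Brenti reference is specific to type $\mathrm B/\mathrm C$. Their model in $S([\pm n])$, with the order $-n<\dots<n$, gives after an order-preserving relabelling the same subgroup of $\mathfrak S_{2a}$ with the same Coxeter generators $s_a$ and $s_js_{2a-j}$, so it applies verbatim. Hultman's general theorem is also the source for the identity $\ell_L=\ell_{\widetilde W}\circ\iota$. Your descent variant needs that identity too, together with the fact that $\ell_L(xs)>\ell_L(x)$ if and only if $\ell(xs)>\ell(x)$ for positive weights.
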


\begin{proof}
The description of the fixed subgroup and the compatibility of the Bruhat orders are \cite[Theorem~5.1, Remark~5.2 and Theorem~5.5]{Hultman}; applied to the $\sigma$-stable parabolic subgroup $\widetilde W_P$, the same description gives $(\widetilde W_P)^\sigma=\iota(\mathfrak S_a)$. Now let $x=\iota(u)$ be a fixed element. For $i<a$, the commuting reflections $s_i$ and $s_{2a-i}$ are either both right descents of $x$ or both right ascents, because $\sigma(x)=x$. Hence $u$ has a right descent at $t_i$ exactly when $x$ has right descents at both $s_i$ and $s_{2a-i}$. Since minimal coset representatives are characterized by the absence of right descents in the Levi subgroup, the last assertion follows.
\end{proof}

\subsection{Unequal-parameter \texorpdfstring{$R$-polynomials}{R-polynomials}}\label{geo:unitary-r}

Let $R^L_{v,y}$, for $v,y\in W(\mathrm C_a)$, be the unequal-parameter $R$-polynomials, normalized to count points. They are determined by $R^L_{v,v}=1$, by $R^L_{v,y}=0$ for $v\not\le y$, and by the recursion
\begin{equation}\label{geo:unequal-recurrence}
 R^L_{v,y}=
 \begin{cases}
 R^L_{vs,ys},&vs<v,\\
 q^{L(s)}R^L_{vs,ys}+(q^{L(s)}-1)R^L_{v,ys},&vs>v,
 \end{cases}
\end{equation}
valid for every simple reflection $s$ with $ys<y$. This normalization corresponds to the quadratic relation $(T_s-q^{L(s)})(T_s+1)=0$, after rescaling the generators in \cite[Section~3.2 and Lemma~4.4]{Lusztig}; other conventions differ by a sign and a power of $q$.

The next lemma is the unitary analogue of the familiar point count for open Richardson varieties. The unequal parameters appear because the relative root subgroups have orders $q^2$ and $q$.

\begin{lemma}\label{geo:unitary-r-points-lemma}
For $v,y\in W(\mathrm C_a)$,
\begin{equation}\label{geo:unitary-r-points}
 R^L_{v,y}(q)=
 \bigl|(B\iota(y)B/B\cap B^-\iota(v)B/B)^F\bigr|.
\end{equation}
\end{lemma}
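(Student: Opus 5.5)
We prove \eqref{geo:unitary-r-points} by showing that the right-hand side, call it $\widetilde R_{v,y}(q)$, satisfies the same initial conditions and the same recursion \eqref{geo:unequal-recurrence} as $R^L_{v,y}$. Since that recursion determines the polynomials uniquely, this suffices.

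\emph{Initial conditions.} Both $B$ and $B^-$ are $F$-stable, so for every $F$-fixed $x\in\widetilde W$ the open Richardson variety $B\iota(y)B/B\cap B^-\iota(v)B/B$ is $F$-stable. It is empty unless $\iota(v)\le\iota(y)$, which by Lemma~\ref{geo:folding} is equivalent to $v\le y$. For $v=y$ the intersection is the single point $\iota(y)B$, which is $F$-fixed, so $\widetilde R_{y,y}=1$.

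\emph{Recursion.} Fix a simple reflection $s=t_i$ of $W(\mathrm C_a)$ with $ys<y$, and let $\widetilde P_s=B\langle\iota(s)\rangle B$ be the standard parabolic attached to the $\sigma$-orbit of $s$. For $i<a$ this orbit is $\{s_i,s_{2a-i}\}$, and $\widetilde P_s/B\simeq\mathbb P^1\times\mathbb P^1$ with $F$ exchanging the two factors (a Weil restriction), so its rational points form $\mathbb P^1(K)$. For $i=a$ the orbit is $\{s_a\}$, and the rational points of $\widetilde P_s/B$ are the $q+1$ isotropic lines of a unitary plane, so they form the Hermitian curve in $\mathbb P^1(K)$. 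In both cases the fixed Bruhat cell $B\iota(s)B/B$ has $q^{L(s)}$ rational points, and its unipotent part is the relative root subgroup $U_s^F$, of order $q^{L(s)}$. We then run Deodhar's standard argument over the $F$-fixed points. Using $\iota(y)=\iota(ys)\iota(s)$ with lengths adding (Hultman), the map $gB\mapsto g\widetilde P_s$ restricted to $B\iota(y)B/B$ is an $F$-equivariant fibration over $B\iota(ys)\widetilde P_s/\widetilde P_s$ with fibre $U_s$. We intersect with $B^-\iota(v)B/B$ and compare with $B^-\iota(vs)B/B$.
\begin{itemize}
\item If $vs<v$, the fibres meet $B^-\iota(v)B$ in exactly one point. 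This gives a bijection of $F$-fixed points with $(B\iota(ys)B\cap B^-\iota(vs)B)/B$, hence $\widetilde R_{v,y}=\widetilde R_{vs,ys}$.
\item If $vs>v$, each rational fibre, which is an affine line over $K$ or an affine Hermitian curve with $q^{L(s)}$ rational points, splits into points lying in $B^-\iota(v)B$ and points lying in $B^-\iota(vs)B$. The count is $q^{L(s)}\widetilde R_{vs,ys}+(q^{L(s)}-1)\widetilde R_{v,ys}$.
\end{itemize}
Concretely, the second case is a rank-one computation inside the Levi subgroup $L_s$ of $\widetilde P_s$: the group $L_s^F$ has derived group $\mathrm{SL}_2(K)$ or $\mathrm{SU}_2(q)$, and in $L_s^F/B^F$ the cells $B^F\backslash$ and $B^{-F}$ orbits have sizes $1$, $q^{L(s)}$ and $q^{L(s)}-1$.

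\emph{Main obstacle.} The delicate step is making the Deodhar fibration argument work at the level of $F$-fixed points rather than geometrically. Over $\overline k$ the fibre is a product of two $\mathbb A^1$'s when $i<a$, and the geometric decomposition would produce four pieces, not two. The resolution is to use the relative BN-pair of $\widetilde G^F=\Ugrp_{2a}(q)$, which has parameters $q^2$ and $q$, together with the fact that the fixed points of $B\iota(w)B/B$ are $B^F\iota(w)B^F/B^F$ (Lang's theorem, since $B$ is connected). With these, the whole computation reduces to the standard finite BN-pair argument: $|B^F\iota(s)B^F/B^F|=q^{L(s)}$, and the multiplication rules for $B^F\iota(s)B^F\cdot B^F\iota(w)B^F$ hold as in a Tits system, with $B^{-F}=\iota(w_0)B^F\iota(w_0)$. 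So I would first establish that $(B^F,N^F)$ is a split BN-pair with Weyl group $W(\mathrm C_a)$ embedded via $\iota$ and root subgroup orders $q^{L(s)}$. Then I would prove the recursion purely group-theoretically for the double-coset intersections $B^F\iota(y)B^F\cap B^{-F}\iota(v)B^F$, exactly as in the equal-parameter case.
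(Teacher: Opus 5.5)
Your plan is correct and is essentially the paper's proof: pass to rational chambers $\widetilde G^F/B^F$ by Lang's theorem, use the relative BN-pair of $\Ugrp_{2a}(q)$, whose $s$-panels have $q^{L(s)}+1$ chambers, and check the initial conditions and the recursion \eqref{geo:unequal-recurrence} by a rank-one count. The paper phrases that count as ``in each $s$-panel the larger opposite index occurs once and the smaller one $q^{L(s)}$ times'', using an $F$-fixed $\dot w_0$ with $B^-=\dot w_0B\dot w_0^{-1}$, as you do. Two of your bullets are slightly misstated, although the resulting counts are right: when $vs<v$, only the fibres over chambers of index $ys$ and opposite index $vs$ contain a chamber of opposite index $v$, and it is unique; when $vs>v$, the fibres over chambers of opposite index $vs$ lie entirely in $B^-\iota(v)B/B$, and these are what produce the two terms $q^{L(s)}\widetilde R_{vs,ys}$ and $(q^{L(s)}-1)\widetilde R_{v,ys}$.
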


\begin{proof}
Let $N_{v,y}$ denote the right-hand side of~\eqref{geo:unitary-r-points}. Since $B$ is connected and $F$-stable, Lang's theorem gives $(\widetilde G/B)^F\simeq\widetilde G^F/B^F$; we call these cosets \emph{rational chambers}. For a relative simple reflection $s$, let $P_s^F$ be the corresponding minimal parabolic subgroup; an \emph{$s$-panel} is a fiber of the map $\widetilde G^F/B^F\to\widetilde G^F/P_s^F$. By the rational Bruhat decomposition \cite[Theorem~24.1]{MalleTesterman}, the fixed points of the absolute cell indexed by $\iota(y)$ form the relative cell indexed by $y$, and similarly for the opposite cells. The relative simple reflections of $\widetilde G^F$ are $t_1,\ldots,t_a$, and the corresponding relative root subgroups have orders $q^2,\ldots,q^2,q$. Put $q_s=q^{L(s)}$. Then $|P_s^F/B^F|=q_s+1$, so each $s$-panel contains $q_s+1$ chambers; see \cite[Sections~23.2 and~24.2]{MalleTesterman}.

Let $w_0$ be the longest element of $W(\mathrm C_a)$, and choose an $F$-fixed representative $\dot w_0$ of $\iota(w_0)$ with $B^-=\dot w_0B\dot w_0^{-1}$. Put $c_+=B/B$ and $c_-=\dot w_0B/B$. We call the Weyl distance from $c_+$ the \emph{index} of a chamber. If $c\in(B^-\iota(u)B/B)^F$, we call $u$ the \emph{opposite index} of $c$; its Weyl distance from $c_-$ is $w_0u$. Within an $s$-panel, the Weyl distances from a fixed chamber take two values, which differ by right multiplication by $s$, and the rank-one Bruhat decomposition of $P_s^F$ shows that one chamber of the panel is at the shorter distance and $q_s$ chambers are at the longer one. Since $\ell(w_0u)=\ell(w_0)-\ell(u)$, it follows that in every $s$-panel the larger of the two opposite indices occurs once and the smaller one $q_s$ times.

Suppose that $ys<y$. Each chamber of index $y$ lies in a unique $s$-panel, and, applying the panel rule with respect to $c_+$, this panel consists of one chamber $c_0$ of index $ys$ and $q_s$ chambers of index $y$. Conversely, each chamber of index $ys$ determines such a panel. We now count the chambers of opposite index $v$ in these panels.

If $vs<v$, then a chamber $c_0$ of opposite index $vs$ contributes exactly one chamber of opposite index $v$, while a chamber $c_0$ of opposite index $v$ contributes none. Hence
$$
 N_{v,y}=N_{vs,ys}\qquad\text{if }vs<v.
$$
If $vs>v$, a chamber $c_0$ of opposite index $vs$ contributes $q_s$ chambers of opposite index $v$, and a chamber $c_0$ of opposite index $v$ contributes $q_s-1$ further ones. Thus
$$
 N_{v,y}=q^{L(s)}N_{vs,ys}
 +(q^{L(s)}-1)N_{v,ys}\qquad\text{if }vs>v.
$$
For $v=y$ the intersection is the single point $\dot vB/B$, where $\dot v$ is an $F$-fixed representative of $\iota(v)$, so $N_{v,v}=1$. If $v\not\leq y$, the intersection is empty by Lemma~\ref{geo:folding} and the nonemptiness criterion for Richardson varieties. These are the recursion and the initial conditions~\eqref{geo:unequal-recurrence}, and induction on $\ell(y)$ gives $N_{v,y}=R^L_{v,y}(q)$.
\end{proof}

\begin{theorem}\label{geo:unitary-count}
In $W(\mathrm C_a)$ put
$$
 \mathcal I_r^{\mathrm U}=
 \{y\in W(\mathrm C_a)^{\mathfrak S_a}:y\le w_r,\ y\not\le w_{r-1}\},
$$
where for $r=0$ the second condition is omitted. Then the number of skew-Hermitian $a\times a$ matrices over $K$ of rank $r$ is
\begin{equation}\label{geo:unitary-r-sum}
 |\mathcal X_r(k)|=
 \sum_{y\in\mathcal I_r^{\mathrm U}}\
 \sum_{\substack{v\in\mathfrak S_a\\v\le y}}R^L_{v,y}(q),
 \qquad L=(2,\ldots,2,1).
\end{equation}
\end{theorem}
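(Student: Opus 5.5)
The proof follows the bilinear case (Proposition~\ref{geo:r-sum}), run inside $\GL_{2a}$ and then restricted to $F$-fixed points.

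\emph{Step 1: the rank stratum as a Schubert difference.} We showed above that $X_P(\widetilde w_r)\cap\mathcal C^-$ is the locus $\rank T\le r$. Hence the geometric exact-rank locus is
$$
\bigl(X_P(\widetilde w_r)\setminus X_P(\widetilde w_{r-1})\bigr)\cap\mathcal C^-.
$$
This set is $F$-stable, and under $T\mapsto\Gamma_T$ its $F$-fixed points are exactly $\mathcal X_r(k)$, since $\mathcal X(k)=\SHerm_a(K)$. By the absolute Bruhat decomposition, the difference of Schubert varieties is the disjoint union of the cells $B\tilde yP/P$ with $\tilde y\in\widetilde W^{\widetilde W_P}$, $\tilde y\le\widetilde w_r$ and $\tilde y\not\le\widetilde w_{r-1}$. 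Since $B$ is $F$-stable, $F$ permutes these cells by $\tilde y\mapsto\sigma(\tilde y)$. Only $\sigma$-fixed cells therefore contain $F$-fixed points.

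By Lemma~\ref{geo:folding}, the $\sigma$-fixed minimal representatives are the elements $\iota(y)$ with $y\in W(\mathrm C_a)^{\mathfrak S_a}$. The same lemma says that Bruhat order is compatible with $\iota$. Together with $\iota(w_r)=\widetilde w_r$, this turns the index set into exactly $\mathcal I_r^{\mathrm U}$.

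\emph{Step 2: pass from $G/P$ to $G/B$.} Here I adapt Lemma~\ref{geo:projection}. The projection $\pi\colon\widetilde G/B\to\widetilde G/P$ maps $B\iota(y)B/B$ isomorphically onto $B\iota(y)P/P$, because $\iota(y)$ is a minimal representative. This isomorphism is $F$-equivariant, since $\pi$ and both cells are $F$-stable. Moreover $\pi^{-1}(\mathcal C^-)=\coprod_{\tilde v\in\widetilde W_P}B^-\tilde vB/B$. Intersecting and taking $F$-fixed points gives
$$
\bigl(B\iota(y)P/P\cap\mathcal C^-\bigr)^F=\coprod_{\tilde v}\bigl(B\iota(y)B/B\cap B^-\tilde vB/B\bigr)^F.
$$
Again $F$ sends the opposite cell indexed by $\tilde v$ to the one indexed by $\sigma(\tilde v)$, so only $\sigma$-fixed $\tilde v$ contribute. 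By Lemma~\ref{geo:folding} these are $\tilde v=\iota(v)$ with $v\in\mathfrak S_a$. Pieces with $v\not\le y$ are empty.

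\emph{Step 3: count.} Lemma~\ref{geo:unitary-r-points-lemma} identifies each remaining piece's point count with $R^L_{v,y}(q)$. Summing over $v\in\mathfrak S_a$ with $v\le y$, and then over $y\in\mathcal I_r^{\mathrm U}$, gives~\eqref{geo:unitary-r-sum}.

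\emph{Main obstacle.} The delicate point is the $F$-equivariance bookkeeping. The Frobenius acts on cells through $\sigma$ rather than trivially, so one must check that a non-fixed cell has no rational points. This holds because $F$ maps the cell to a different, disjoint cell. One must also check that the chosen representatives are compatible with the normalization in Lemma~\ref{geo:unitary-r-points-lemma}. I would verify explicitly that $F(B\tilde wB)=B\sigma(\tilde w)B$ for the antidiagonal form $J_0$, and likewise for $B^-$; this uses that $B$ and $B^-$ are $F$-stable. Everything else is a direct transcription of the proof of Proposition~\ref{geo:r-sum}.
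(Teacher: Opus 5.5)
Your proposal is correct and follows the paper's proof essentially step by step. You write the rank-$r$ stratum as a union of Bruhat cells $B\widetilde yP/P$, restrict to $\sigma$-stable indices, which Lemma~\ref{geo:folding} identifies with $\iota(\mathcal I_r^{\mathrm U})$, split each cell along the opposite cells $B^-\iota(v)B/B$ with $v\in\mathfrak S_a$ via the projection from $\widetilde G/B$, and take the counts $R^L_{v,y}(q)$ from Lemma~\ref{geo:unitary-r-points-lemma}. The check you flag, $F(B\widetilde wB)=B\sigma(\widetilde w)B$, is immediate: $F(B)=B$, and on permutation matrices $F$ is conjugation by the antidiagonal Gram matrix, that is, by the longest element of $\mathfrak S_{2a}$, which induces $\sigma$.
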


\begin{proof}
The rational points of a $k$-variety are the fixed points of its Frobenius, so
$$
 \mathcal X_r(k)=
 \bigl[
 \bigl(X_P(\widetilde w_r)\setminus X_P(\widetilde w_{r-1})\bigr)
 \cap\mathcal C^-
 \bigr]^F,
$$
where the second Schubert variety is omitted for $r=0$. The Frobenius maps $B\widetilde yP/P$ onto $B\sigma(\widetilde y)P/P$. Since distinct Bruhat cells are disjoint, a cell containing an $F$-fixed point has a $\sigma$-fixed index. The cells in the rank-$r$ locus are those with $\widetilde y\leq\widetilde w_r$ and $\widetilde y\not\leq\widetilde w_{r-1}$, and since $\widetilde w_j=\iota(w_j)$, Lemma~\ref{geo:folding} identifies the $\sigma$-fixed indices among them with the elements $\iota(y)$, $y\in\mathcal I_r^{\mathrm U}$.

Fix such a $y$. Since $\iota(y)$ is a minimal coset representative by Lemma~\ref{geo:folding}, the projection $\pi:\widetilde G/B\to\widetilde G/P$ restricts to an isomorphism
$$
 B\iota(y)B/B\longrightarrow B\iota(y)P/P,
$$
and this isomorphism commutes with $F$ because $B$ and $P$ are $F$-stable. As in Lemma~\ref{geo:projection},
$$
 \pi^{-1}(\mathcal C^-)
 =\coprod_{\widetilde v\in\widetilde W_P}B^-\widetilde vB/B.
$$
Intersect this partition with $B\iota(y)B/B$. The Frobenius maps the piece indexed by $\widetilde v$ onto the piece indexed by $\sigma(\widetilde v)$, and since the pieces are disjoint, a fixed point can only lie in a piece with $\sigma(\widetilde v)=\widetilde v$. By Lemma~\ref{geo:folding} these indices are the elements $\iota(v)$ with $v\in\mathfrak S_a$, and the corresponding intersection is nonempty exactly when $v\le y$. By Lemma~\ref{geo:unitary-r-points-lemma} it has $R^L_{v,y}(q)$ fixed points. Summing over $v$ and then over $y$ gives~\eqref{geo:unitary-r-sum}.
\end{proof}

As in the bilinear case, the coefficients can be read off from distinguished subexpressions. Fix a reduced expression $y=\tau_1\cdots\tau_\ell$ with $\tau_i\in\{t_1,\ldots,t_a\}$, use the distinguished subexpressions of Section~\ref{geo:split-section} for this word, and put
$$
 b_L(\gamma)=
 \sum_{i:\ \gamma_{i-1}\tau_i<\gamma_{i-1}}L(\tau_i),
 \qquad
 a_j(\gamma)=
 \bigl|\{i:\gamma_i=\gamma_{i-1},\ L(\tau_i)=j\}\bigr|
 \quad(j=1,2).
$$
Applying~\eqref{geo:unequal-recurrence} successively along the word, a forced downward step contributes $q^{L(\tau_i)}$, a skipped step contributes $q^{L(\tau_i)}-1$, and an upward step contributes $1$. Hence
$$
 R^L_{v,y}(q)=\sum_{\gamma_\ell=v}
 q^{b_L(\gamma)}(q-1)^{a_1(\gamma)}(q^2-1)^{a_2(\gamma)}.
$$
With $t=q-1$, each summand becomes
$$
 q^{b_L}(q-1)^{a_1}(q^2-1)^{a_2}
 =(1+t)^{b_L}\,t^{a_1+a_2}(t+2)^{a_2},
$$
and its contribution to the coefficient of $t^j$ is
\begin{equation}\label{geo:weighted-coefficient}
 \sum_{\substack{0\le p\le b_L,\ 0\le u\le a_2\\p+u=j-a_1-a_2}}
 \binom{b_L}{p}\binom{a_2}{u}2^{a_2-u}.
\end{equation}
Summing over $y\in\mathcal I_r^{\mathrm U}$ and over the distinguished subexpressions ending in $\mathfrak S_a$ gives the coefficients of~\eqref{geo:unitary-r-sum}. The Weyl group, the weights and the subexpressions do not depend on $q$, so~\eqref{geo:unitary-r-sum} is a single polynomial in $\BN[q-1]$.

\begin{remark}\label{geo:parity}
For the ambient group $\Ugrp_{2n+1}(q)$ the last relative parameter is $q^3$; see \cite[Sections~23.2 and~24.2]{MalleTesterman}. This plays no role here: the auxiliary group $\widetilde G^F\simeq\Ugrp_{2a}(q)$ has parameters $q^2,\ldots,q^2,q$ regardless of the parity of the ambient group, and these are the parameters in~\eqref{geo:unitary-r-sum}.
\end{remark}

Finally, since the character parameter bundle has rank $2M(a-r)$ over $\mathcal X_r$, we obtain for $e=Mr$
$$
 N_{U,Mr}(q)
 =q^{2M(a-r)}|\mathcal X_r(k)|
 =q^{2M(a-r)}
 \sum_{y\in\mathcal I_r^{\mathrm U}}
 \sum_{\substack{v\in\mathfrak S_a\\v\le y}}
 R^L_{v,y}(q).
$$
Its coefficients in powers of $q-1$ are given by~\eqref{geo:weighted-coefficient} with $b_L$ replaced by $b_L+2M(a-r)$. Together with Section~\ref{geo:split-section}, this completes the proof of Theorem~\ref{thm:intro-geometry}.

\begin{remark}\label{geo:extensions}
The formulas of this section concern the unitary family as $q$ varies. They do not give the number of points of the fixed $k$-variety $\mathcal X_r$ over the finite extensions of $k$ by substituting $q^m$ for $q$, because the unitary form splits over extensions of even degree. For instance, let $a=2$, $r=1$ and $Q=q^m$. If $m$ is odd, then $F^m(T)=-T^{(Q)t}$, so $\mathcal X_1(\BF_{q^m})$ is the set of skew-Hermitian $2\times2$ matrices of rank one over $\BF_{Q^2}$ and
$$
 |\mathcal X_1(\BF_{q^m})|=(Q-1)(Q^2+1).
$$
If $m$ is even, then $F^m(T)=T^{(Q)}$, so $\mathcal X_1(\BF_{q^m})$ is the set of all $2\times2$ matrices of rank one over $\BF_Q$ and
$$
 |\mathcal X_1(\BF_{q^m})|=(Q-1)(Q+1)^2.
$$
A partition of $\mathcal X_1$ into $k$-subvarieties isomorphic to $\mathbb A_k^{b}\times(\mathbb G_{m,k})^{c}$ would give $|\mathcal X_1(\BF_{q^m})|=P(q^m)$ for a single polynomial $P$ and all $m$. The odd values of $m$ force $P(t)=(t-1)(t^2+1)$, which contradicts the count for even $m$. So no such partition exists, and this is why we counted the rational points of the unitary rank loci directly instead of using the argument at the end of Section~\ref{sec:parameter-varieties}.
\end{remark}

\section*{Declaration of generative AI and AI-assisted technologies in the manuscript preparation process}

During manuscript preparation, we used ChatGPT (OpenAI) to improve English expression and readability, assist with editing and formatting mathematical formulas, provide coding assistance, and help locate relevant literature. 
We reviewed and revised the resulting text and formulas, checked the relevant code, and verified the bibliographic information and cited claims against the original sources. 
The authors retain full responsibility for the accuracy, originality, and integrity of the final manuscript.

\end{document}